\newcommand{\bir}{\dashrightarrow}
\newcommand{\mS}{\mathcal{S}}
\newcommand{\mJ}{\mathcal{J}}
\newcommand{\mC}{\mathcal{C}}
\newcommand{\mK}{\mathcal{K}}
\newcommand{\mH}{\mathcal{H}}
\newcommand{\mG}{\mathcal{G}}
\newcommand{\mF}{\mathcal{F}}
\newcommand{\mD}{\mathcal{D}}
\newcommand{\DD}{\mathbb{D}}
\newcommand{\ZZ}{\mathbb{Z}}
\newcommand{\CC}{\mathbb{C}}
\newcommand{\RR}{\mathbb{R}}
\newcommand{\PP}[1]{\mathbb{CP}^{#1}}
\newcommand{\RP}[1]{\mathbb{RP}^{#1}}
\newcommand{\id}{\mbox{id}}
\newcommand{\Orb}{\mbox{Orb}}
\newcommand{\Map}{\mbox{Map}}
\newcommand{\Coll}{\mbox{Col}}
\newcommand{\Conf}{\mbox{Conf}}
\newcommand{\Diff}{\mbox{Diff}}
\newcommand{\Symp}{\mbox{Symp}}
\newcommand{\rot}{\mbox{rot}}
\newcommand{\Endo}{\mbox{End}}
\newcommand{\Br}{\mbox{Br}}
\newcommand{\ev}{\mbox{ev}}
\newcommand{\Stab}{\mbox{Stab}}
\newcounter{unthm}
\newcounter{unlma}
\newtheorem{thmun}[unthm]{Theorem}
\newtheorem{lmaun}[unlma]{Lemma}
\newtheorem{thm}{Theorem}
\newtheorem{dfn}[thm]{Definition}
\newtheorem{lma}[thm]{Lemma}
\newtheorem{prp}[thm]{Proposition}
\newtheorem{rmk}[thm]{Remark}
\newtheorem{cnd}{Condition}
\title[Symplectic mapping class groups]%
{Symplectic mapping class groups of some Stein and rational surfaces}
\author{J. D. Evans}
\address{DPMMS, Centre for Mathematical Sciences, Wilberforce Road, Cambridge,
CB3 0WA}
\email{j.evans@dpmms.cam.ac.uk}
\begin{document}
\begin{abstract}
In this paper we compute the homotopy groups of the symplectomorphism groups of
the 3-, 4- and 5-point blow-ups of the projective plane (considered as monotone
symplectic Del Pezzo surfaces). Along the way, we need to compute the homotopy
groups of the compactly supported symplectomorphism groups of the cotangent
bundle of $\RP{2}$ and of $\CC^*\times\CC$. We also make progress in the case of the
$A_n$-Milnor fibres: here we can show that the (compactly supported)
Hamiltonian group is contractible and that the symplectic mapping class group
embeds in the braid group on $n$-strands.
\end{abstract}
\maketitle
\tableofcontents

\section{Introduction}

A central object in symplectic topology is the group $\Symp(X,\omega)$ of (compactly supported) diffeomorphisms of a symplectic manifold $X$ which preserve the symplectic form $\omega$. This paper studies the weak homotopy type of $\Symp(X,\omega)$ for some simple symplectic 4-manifolds. We begin by reviewing what is known about the topology of symplectomorphism groups.

\subsection{Review}

Gromov \cite{Gro85} gave a beautiful proof of the following theorem:

\begin{thm}
The symplectomorphism group of $S^2\times S^2$ with its product symplectic form
$\omega\oplus\omega$ deformation retracts onto $SO(3)\times SO(3)\rtimes\ZZ/2$.
\end{thm}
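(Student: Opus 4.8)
The plan is to show that the compact group $G=(SO(3)\times SO(3))\rtimes\ZZ/2$, realised as the isometry group of the standard Kähler structure $(J_0,\omega\oplus\omega)$ on $\PP{1}\times\PP{1}$ (with $J_0$ the product complex structure and the $\ZZ/2$ interchanging the two factors, which is a symplectomorphism precisely because the two areas are equal), includes into $\Symp(S^2\times S^2,\omega\oplus\omega)$ as a deformation retract. The entire argument is driven by Gromov's theory of pseudoholomorphic curves applied to the two distinguished classes $A=[S^2\times\mathrm{pt}]$ and $B=[\mathrm{pt}\times S^2]$, which satisfy $A\cdot A=B\cdot B=0$ and $A\cdot B=1$.

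First I would recall that the space $\mJ$ of $\omega$-compatible almost complex structures is non-empty and contractible, and that $\Symp$ acts on it by pushforward $\phi\cdot J=\phi_*J$. The stabiliser of $J_0$ is exactly the group of symplectomorphisms that are simultaneously $J_0$-holomorphic, namely the Kähler isometry group $G$. Were this action transitive, the orbit map would give a fibration $G\to\Symp\to\mJ$ with contractible base and the theorem would follow at once; it is \emph{not} transitive, and the purpose of the curve theory is to supply the missing transitivity up to homotopy.

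The key input is that for every $J\in\mJ$ the classes $A$ and $B$ are each represented by a unique embedded $J$-holomorphic sphere through each point. Gromov compactness forbids bubbling because $A$ and $B$ have minimal area in their respective cones; positivity of intersections forces the curves in each class to be embedded and pairwise disjoint, and to meet each curve of the other class transversally in a single point; and the index computation yields automatic regularity, so the relevant moduli spaces are smooth and vary smoothly with $J$. Consequently every $J$ determines a pair of transverse foliations of $S^2\times S^2$ by spheres, each with leaf space diffeomorphic to $S^2$, and this pair depends continuously on $J$.

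From here I would straighten these foliations: the two leaf spaces together identify $(S^2\times S^2,J)$ with the standard product foliated picture, producing a continuous family of diffeomorphisms carrying each $J$ back towards $J_0$. Feeding this into the action of $\Symp$ on the contractible space $\mJ$ yields the desired deformation retraction onto $G$; the $SO(3)\times SO(3)$ factor appears as the isometries of the two leaf-space $S^2$'s, and the $\ZZ/2$ as the swap $(x,y)\mapsto(y,x)$, which interchanges the classes $A$ and $B$ and so lies in a component distinct from the identity (detected by its action on $H_2$). The main obstacle I anticipate is the analytic package behind the foliations — the parametrised Gromov compactness and automatic-regularity statements guaranteeing that the foliations exist, are unique, and vary smoothly in families — together with the care needed to upgrade the resulting homotopy equivalence of orbits into an honest deformation retraction at the level of the full group rather than merely a weak equivalence.
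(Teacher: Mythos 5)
Your proposal is correct and follows essentially the same route the paper indicates: the paper cites Gromov and sketches exactly this argument, namely extracting the two transverse $J$-holomorphic foliations in the classes $[S^2\times\mathrm{pt}]$ and $[\mathrm{pt}\times S^2]$ for every compatible $J$, using positivity of intersections to pair their leaves, and conjugating each foliated picture back to the standard one over the contractible space $\mJ$ to retract $\Symp(S^2\times S^2)$ onto the K\"ahler isometry group $(SO(3)\times SO(3))\rtimes\ZZ/2$. The two caveats you flag (a Moser-type correction to make the straightening diffeomorphisms symplectic, and upgrading weak equivalence to an actual deformation retraction via the CW-homotopy type of these groups) are precisely the points the paper also leaves to the references.
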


This is just one corollary of his theory of pseudholomorphic
curves in symplectic manifolds. The tractability of $\Symp(S^2\times S^2)$ derives from the following existence theorem for pseudoholomorphic foliations:

\begin{thm}\label{folnthm}
If $J$ is an almost complex structure on $X=S^2\times S^2$ which is compatible
with the product symplectic form $\omega$ then there exist $J$-holomorphic
foliations $\mF_1$ and $\mF_2$ of $X$ whose leaves are spheres in the homology
classes $S^2\times\{pt\}$ and $\{pt\}\times S^2$ respectively.
\end{thm}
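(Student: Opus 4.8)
The plan is to construct $\mF_1$; the foliation $\mF_2$ is obtained identically after exchanging the two factors. Write $A = [S^2\times\{pt\}]$ and $B = [\{pt\}\times S^2]$ for the generators of $H_2(X;\ZZ)$, so that $A\cdot A = B\cdot B = 0$ and $A\cdot B = 1$. The adjunction formula for an embedded sphere gives $c_1(A) = A\cdot A + 2 = 2$, so the expected dimension of the moduli space $\mM$ of unparametrised $J$-holomorphic spheres in class $A$ is $2c_1(A) + 2(\dim_\CC X - 3) = 2$, matching the $S^2$ of leaves $\{S^2\times\{pt\}\}$ present for the split integrable structure $J_0$. I would study $\mM$ and show that, for every compatible $J$, it is a smooth compact surface whose members are disjoint embedded spheres covering $X$ exactly once.

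First I would record the \emph{local} geometry of the curves, which holds for any compatible $J$. Because $c_1(A) = 2 \geq 1$ (equivalently $A\cdot A \geq -1$), every simple $J$-holomorphic sphere in class $A$ is automatically regular, so $\mM$ is a smooth $2$-manifold with no transversality hypothesis on $J$ (automatic transversality in dimension $4$, after Hofer--Lizan--Sikorav). The adjunction formula forces every such curve to be embedded: the singularity count $\delta$ satisfies $0 = A\cdot A = -2 + c_1(A) + 2\delta$, hence $\delta = 0$. Finally, positivity of intersections forces two distinct curves in class $A$ to be disjoint, since any intersection point would contribute at least $1$ to $A\cdot A = 0$; in particular at most one curve of $\mM$ passes through each point of $X$.

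Next I would establish compactness of $\mM$ together with the existence of a curve through every point. Gromov compactness reduces the former to excluding bubbling: a limiting nodal configuration would express $A$ as a sum of two effective classes, each of strictly smaller symplectic area, but $A$ is primitive of minimal area and the effective cone is spanned by $A$ and $B$, so no nontrivial splitting $A = A_1 + A_2$ with both $A_i$ effective exists; hence $\mM$ is compact. For non-emptiness and the covering property I would run a deformation argument along a path $J_t$ from $J_0$ to $J$: by automatic regularity and the absence of bubbling the parametrised moduli space is a smooth compact cobordism, so the mod-$2$ degree of the evaluation map from the universal curve over $\mM$ to $X$ is invariant. It equals $1$ at $J_0$, so for $J$ as well exactly one curve passes through each point.

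The main obstacle is precisely the passage from a generic to an \emph{arbitrary} compatible $J$: one cannot simply perturb, because the conclusion must hold for every $J$. The two facts that make this work are that $c_1(A)$ is large enough to give automatic transversality and that the area of $A$ is small enough to preclude bubbling; together they make $\mM$ a compact smooth surface for all $J$, while the cobordism from $J_0$ transports both non-emptiness and the degree-one evaluation. Combining the steps, the curves of $\mM$ are disjoint embedded spheres, one through each point, and their smooth dependence on the leaf parameter yields the foliation $\mF_1$; applying the argument to $B$ gives $\mF_2$.
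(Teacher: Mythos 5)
This theorem is not proved in the paper at all: it is quoted as background and attributed to Gromov \cite{Gro85}, so there is no internal proof to compare against. Your sketch is the standard modern route to the result and is essentially correct: somewhere-injectivity from primitivity of $A$, adjunction to get embeddedness, automatic transversality (Hofer--Lizan--Sikorav) since $c_1(A)=2\geq 1$, positivity of intersections with $A\cdot A=0$ to get disjointness, no bubbling because $[\omega]$ is integral and $\omega(A)=1$ is the minimal positive area, and a cobordism from the split $J_0$ to transport the degree-one evaluation map. Two small points deserve more care. First, the phrase ``the effective cone is spanned by $A$ and $B$'' is not something you know for an arbitrary compatible $J$; drop it and rely solely on the area argument you also give (any nodal limit has at least two nonconstant components, each of integral area at least $1$, exceeding $\omega(A)=1$). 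Second, to conclude that the curves form a genuine \emph{foliation} rather than merely a partition of $X$ into disjoint embedded spheres, you should observe that the evaluation map from the universal curve is a local diffeomorphism: the normal bundle of each leaf has degree $A\cdot A=0$, and by automatic transversality its Cauchy--Riemann operator has trivial cokernel, so every nonzero kernel element is nowhere vanishing; this gives the transverse smooth structure. With those additions the argument is complete.
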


Since $J$-holomorphic curves intersect positively, a leaf of $\mF_1$ and a leaf
of $\mF_2$ intersect transversely in a single point. This allows one to
construct diffeomorphisms conjugating the foliation pairs coming from different
almost complex structures. Modifications of these arguments allow one to deduce
that:

\begin{itemize}
\item $\Symp(\PP{2},\omega)$ is homotopy equivalent to $\mathbb{P}U(3)$, where
$\omega$ is the Fubini-Study form (due to Gromov, \cite{Gro85}),
\item $\Symp(\DD_1,\omega)$ is homotopy equivalent to $U(2)$, where $\DD_1$ is
the 1-point blow-up of $\PP{2}$ and $\omega$ is the anticanonical K\"{a}hler
form, (due to Gromov, \cite{Gro85}),
\item $\Symp(\DD_2,\omega)$ is weakly homotopy equivalent to $T^2\rtimes\ZZ/2$,
where $\DD_2$ is the 2-point blow-up of $\PP{2}$ and $\omega$ is the
anticanonical K\"{a}hler form (due to Lalonde-Pinsonnault, \cite{LP04}),
\item $\Symp_c(\CC^2)$ is contractible, where $\omega$ is the product
symplectic form (due to Gromov, \cite{Gro85}),
\item $\Symp_c(T^*S^2)$ is weakly homotopy equivalent to $\ZZ$, where $\omega$ is the canonical symplectic form (by argument due to Seidel, see \cite{Sei98}).
\end{itemize}

Here $\Symp_c$ denotes the group of compactly supported symplectomorphisms.
These arguments work because $\CC^2$ and $T^*S^2$ embed into $S^2\times S^2$
and inherit foliations by pseudoholomorphic discs from the foliations of
theorem \ref{folnthm}.

One might hope that if one could calculate $\pi_i\left(\Symp_c(A))\}_{i\in\mathbb{N}}\right)$ for some other affine
varieties $A$ then one might use Gromov's theory of pseudoholomorphic curves in
symplectic 4-manifolds to deduce something about the topology of $\Symp(X)$
where $X$ is a projective variety containing $A$ as a Zariski open set. This
paper accomplishes this in a small number of cases.

Note that there are also some results on weak contractibility of $\Symp_c$ for
symplectic disc bundles over $S^2$, from the work of Richard Hind \cite{Hin03}
and on $\Symp_c(\CC^*\times\CC^*)$ from the work of Chris Wendl \cite{Wen08}.
These results are based on arguments from symplectic field theory. There is
also a lot of work about the topology of $\Symp(S^2\times S^2,\omega)$ where
$\omega$ is a non-monotone symplectic form (for example, see \cite{Abr98}). In
this paper we are almost exclusively concerned with monotone symplectic forms
(except in the final section), so we do not review this literature. It should
be noted, however, that the techniques of \cite{Abr98} have heavily inspired
the techniques of this paper and this will be immediately evident to readers
who are familiar with that work.

\subsection{New results}

The aim of this paper is to add to the list of symplectic 4-manifolds whose
symplectomorphism group we understand (up to weak homotopy equivalence). In
particular, we prove the following theorems:

\begin{thm}\label{sympgrps}
Let $\DD_n$ denote the blow-up of $\PP{2}$ at $n\leq 5$ points in general
position and let $\omega$ denote its anticanonical K\"{a}hler form. In each
case let $\Symp_0(\DD_n)$ denote the group of symplectomorphisms of
$(\DD_n,\omega)$ which act trivially on $H_*(\DD_n,\ZZ)$.
\begin{itemize}
\item $\Symp_0(\DD_3)$ is weakly homotopy equivalent to $T^2$,
\item $\Symp_0(\DD_4)$ is weakly contractible,
\item $\Symp_0(\DD_5)$ is weakly homotopy equivalent to $\Diff^+(S^2,5)$, the
group of orientation-preserving diffeomorphisms of $S^2$ preserving five
points.
\end{itemize}
\end{thm}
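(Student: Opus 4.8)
The plan is to study these symplectomorphism groups by analyzing spaces of almost complex structures and the pseudoholomorphic curves they support, following the strategy pioneered by Gromov and refined by Abreu–McDuff and Lalonde–Pinsonnault. For each $\DD_n$ I would fix a collection of homology classes of exceptional curves and low-degree rational curves that, for the anticanonical monotone form, are guaranteed to be represented by embedded $J$-holomorphic spheres for a generic (indeed, for every) compatible $J$. The key input is that on a Del Pezzo surface the relevant exceptional classes $E_i$ and the classes of lines/conics through the blown-up points have nonnegative expected dimension and, by automatic transversality for spheres in dimension four (positivity of intersections plus the adjunction formula), are cut out transversally, so the space $\mathcal{J}$ of compatible almost complex structures admits a stratification by the configuration type of these curves.

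**First I would** establish that the action of $\Symp_0(\DD_n)$ on $\mathcal{J}$ (which is contractible) has the homotopy type of the orbit space computed via the curve configurations. Concretely, one fibers $\Symp_0(\DD_n)$ over a space of "standard configurations" of exceptional curves; the fiber is the stabilizer, which is the symplectomorphism group of the complement of the configuration. Blowing down the exceptional spheres relates $\Symp_0(\DD_n)$ to $\Symp_0(\DD_{n-1})$ together with the symplectomorphisms fixing the blown-down points, so one obtains fibration sequences of the Lalonde–Pinsonnault type relating $\Symp_0(\DD_n)$ to configuration spaces of points and to the symplectomorphism groups of the open pieces. The base cases $\DD_1, \DD_2$ are quoted in the review, and the compactly supported groups $\Symp_c(T^*\RP{2})$ and $\Symp_c(\CC^*\times\CC)$ promised in the abstract enter precisely as the fibers that appear when one removes a line or a conic configuration from $\DD_3$ and $\DD_4$.

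**The computation then proceeds inductively.** For $\DD_3$ I expect the homotopy type to be controlled by a torus of Hamiltonian symmetries once the exceptional configuration is normalized, giving $T^2$; for $\DD_4$ the rigidity of the five-point configuration (the unique conic through five general points, together with the ten lines) forces the stabilizer and hence $\Symp_0(\DD_4)$ to be weakly contractible; for $\DD_5$ the relevant moduli is a genuine sphere with five marked points, and I would identify the symplectic mapping class group action with the diffeomorphism group $\Diff^+(S^2,5)$ by showing the map $\Symp_0(\DD_5)\to\Diff^+(S^2,5)$ induced by restricting to the unique anticanonical curve (or to the conic) is a weak homotopy equivalence, using that the complementary data is contractible.

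**The hard part will be** proving that the strata of $\mathcal{J}$ have the expected codimension and that the connecting maps in the fibrations are surjective — equivalently, that every configuration of the required curves can be moved to any other by a symplectomorphism in $\Symp_0$, and that the space of such configurations is connected with the expected higher homotopy. This amounts to a careful transversality and gluing analysis for the moduli of spheres, compactness control (ruling out or understanding degenerations where curves bubble or configurations collide), and verifying that the non-generic strata really do have codimension at least two so that $\mathcal{J}$ minus the good stratum remains highly connected. Establishing the contractibility of the fibers $\Symp_c(T^*\RP{2})$ and $\Symp_c(\CC^*\times\CC)$ — which requires its own Gromov-style foliation argument — is the main technical obstacle feeding into the $\DD_4$ and $\DD_5$ cases.
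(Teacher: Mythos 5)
Your broad strategy --- fibering $\Symp_0(\DD_n)$ over a contractible space of configurations of exceptional spheres and feeding in the compactly supported symplectomorphism groups of the affine complements --- is indeed the skeleton of the paper's argument. But there are two genuine gaps that would derail the computation as you have set it up.

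First, $\Symp_c(T^*\RP{2})$ is \emph{not} contractible: it is weakly homotopy equivalent to $\ZZ$, generated by the Dehn twist in the Lagrangian $\RP{2}$ (this is the content of section \ref{rp2}, adapting Seidel's $T^*S^2$ argument). For $\DD_5$ the complement of the relevant divisor (the five exceptional spheres together with the proper transform $Q=2H-\sum E_i$ of the conic) is exactly this $T^*\RP{2}$, so your exact sequence acquires an extra $\ZZ$ in $\pi_0$ of the fibre. To get the stated answer $\Diff^+(S^2,5)$ one must show this Dehn twist dies in $\pi_0(\Symp_0(\DD_5))$; the paper does this with a separate lemma (a Hamiltonian circle action on the complement of the Lagrangian $\RP{2}$ commuting with the cogeodesic flow forces the twist to be symplectically isotopic to the identity), realised concretely by a circle action on $\DD_5$ preserving the exceptional locus. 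Nothing in your proposal produces this step, and without it you would compute the wrong mapping class group.

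Second, the stabiliser of a configuration is not the compactly supported symplectomorphism group of the complement. There are two intermediate fibrations: restriction to the components gives $\Stab^0(C)\to\Stab(C)\to\Symp(C)$, and the derivative on normal bundles gives $\Symp_c(U)\simeq\Stab^1(C)\to\Stab^0(C)\to\mG$, where $\mG$ is a product of groups of symplectic gauge transformations fixed at the intersection points. The actual answers are extracted from the boundary maps $\pi_1(\Symp(C))\to\pi_0(\mG)$, computed locally by rotating one component about an intersection point and reading off the induced twist in the normal bundles of its neighbours. For $\DD_3$ this is a surjection $\ZZ^5\to\ZZ^3$ with rank-two kernel, whence $\pi_1=\ZZ^2$ and $\pi_0=0$; for $\DD_4$ it is an isomorphism $\ZZ^4\to\ZZ^4$; for $\DD_5$ the factor $\Diff^+(S^2,5)$ appears as the surface symplectomorphism group of the component $Q$ with its five marked points. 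Your proposal never introduces the gauge groups or these boundary maps, so it has no mechanism for producing $T^2$ versus a point versus $\Diff^+(S^2,5)$. Relatedly, your description of $\DD_4$ (``the unique conic through five general points, together with the ten lines'') conflates it with $\DD_5$: the configuration used for $\DD_4$ is $S_{12},S_{34},E_1,\dots,E_4$ with complement biholomorphic to $\CC^2$, and $\CC^*\times\CC$ is the complement arising for $\DD_3$. Finally, the stratification of $\mJ$ by configuration type that you invoke is unnecessary in the monotone setting: every exceptional class is represented by a unique embedded $J$-holomorphic sphere for \emph{every} compatible $J$, so there is no wall-crossing to control, and the paper instead proves contractibility of the configuration space directly via Gompf isotopy and the contractibility of the space of almost complex structures making a fixed orthogonal configuration holomorphic.
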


The author has since learned that these results on monotone symplectic Del Pezzo surfaces have also been obtained independently by Martin Pinsonnault \cite{Pin}.

\begin{thm}\label{milnorsymp}
Let $W$ be the $A_n$-Milnor fibre, the affine variety given by the equation
\[x^2+y^2+z^n=1,\]
and let $\omega$ be the K\"{a}hler form on $W$ induced from the ambient
$\CC^3$. Then the group of compactly supported symplectomorphisms of
$(W,\omega)$ is weakly homotopy equivalent to its group of components. This
group of components injects homomorphically into the braid group $\Br_n$ of
$n$-strands on the disc.
\end{thm}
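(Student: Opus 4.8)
The plan is to exploit the Lefschetz fibration $\pi\colon W\to\CC$, $\pi(x,y,z)=z$, whose fibre $\{x^2+y^2=1-z^n\}$ is a smooth affine conic (hence a cylinder, symplectomorphic to $T^*S^1$) away from the $n$ critical values $\{z:z^n=1\}$, over each of which it degenerates to a nodal conic. These $n$ critical points exhibit $W$ as an exact Lefschetz fibration over $\CC$ with cylindrical fibre and $n$ vanishing cycles, each the core circle of the cylinder; matching paths between consecutive critical values produce the standard $A_{n-1}$-chain of Lagrangian $2$-spheres $L_1,\dots,L_{n-1}$. The whole construction is designed so that any compactly supported symplectomorphism determines a braiding of the $n$ critical values, i.e.\ an element of $\Br_n$, which is the mapping class group of the disc with those $n$ points marked.

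First I would make the fibration pseudoholomorphic: the fibres are holomorphic for the integrable complex structure, and I would study the moduli space of $J$-holomorphic curves in the fibre class for a weakly contractible family $\mJ$ of $\omega$-compatible almost complex structures, suitably controlled on the cylindrical end of $W$. In real dimension $4$ automatic regularity and positivity of intersections apply, and exactness of $(W,\omega)$ forbids bubbling of closed components, so I expect each such $J$ to admit a foliation of $W$ by $J$-holomorphic cylinders with exactly $n$ nodal singular fibres, i.e.\ an abstract Lefschetz fibration structure deforming $\pi$. The structural output would be a fibration sequence in which $\Symp_c(W)$ maps to the weakly contractible space $\mF$ of such $\omega$-compatible Lefschetz fibration structures fixed at infinity, with fibre the subgroup of symplectomorphisms preserving a chosen structure.

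Granting this, I would analyse the two ends of the sequence. The fibre-preserving symplectomorphisms are governed by the base disc, marked by the $n$ critical values, together with the cylinder fibre, whose compactly supported mapping class group is $\ZZ$ (the fibre Dehn twist) with weakly contractible components. Since $H^1(W)=0$, the identity component of $\Symp_c(W)$ coincides with $\Ham_c(W)$; combining weak contractibility of $\mF$ with this fibre computation, the long exact sequence should force $\Ham_c(W)$ to be weakly contractible. As every component of $\Symp_c(W)$ is homeomorphic to the identity component, this yields the first claim, that $\Symp_c(W)$ is weakly homotopy equivalent to $\pi_0\Symp_c(W)$.

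For the second claim, the monodromy map $\Phi\colon\pi_0\Symp_c(W)\to\pi_0\Diff_c(\text{disc},n\text{ pts})=\Br_n$ recording the braiding of the critical values is a well-defined homomorphism, independence of the choices following from weak contractibility of $\mF$. Injectivity is where the real work lies: if $\Phi([\phi])$ is trivial I would isotope $\phi$ to preserve every fibre, reduce it to a fibrewise compactly supported symplectomorphism of the cylinder, and argue that the trivial braid forces the fibrewise Dehn-twist framing to vanish, so that $\phi$ is fibrewise Hamiltonian isotopic to the identity. The main obstacle throughout is the pseudoholomorphic input of the second paragraph: because the fibre is non-compact, standard Gromov compactness must be supplemented by a priori control of the cylinders on the end of $W$ (via energy estimates or a neck-stretching argument), and one must rule out degenerations of the fibre class incompatible with a foliation. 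Establishing this compactness, and the resulting Serre fibration property of the restriction map, is the crux on which both claims rest.
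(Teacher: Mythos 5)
Your strategy --- foliate the open manifold $W$ by $J$-holomorphic cylinders deforming the Lefschetz fibration $(x,y,z)\mapsto z$, and fibre $\Symp_c(W)$ over a contractible space of such structures --- is genuinely different from the paper's, and the step you yourself flag as the crux is a real gap, not a technicality. Gromov compactness for closed curves does not apply to the non-compact conic fibres; controlling breaking and escape to infinity for these cylinders requires the finite-energy-foliation / SFT machinery of Hind and Wendl, which is a substantially heavier toolkit and is nowhere set up in your sketch. The paper's whole design avoids this: it compactifies $W$ to the blow-up $X$ of $\PP{2}$ at the $n$ collinear points $[\xi_k:0:1]$, so that $W\cong X\setminus(C_{n+1}\cup C_{n+2})$ and the objects tracked are the $n$ exceptional $(-1)$-spheres --- \emph{closed} curves in a \emph{closed} $4$-manifold, for which existence and uniqueness of embedded $J$-holomorphic representatives is standard. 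The space $\mC_0$ of such ``standard configurations'' is shown to be weakly contractible, and intersecting with the section sphere $C_{n+2}$ gives a fibration $\Psi:\mC_0\to\Conf(n)$ onto the configuration space of $n$ unordered points in a disc, a $K(\Br_n,1)$; the identity component is killed because the stabiliser of a configuration reduces, via gauge groups and the symplectic neighbourhood theorem, to $\Symp_c(\CC^*\times\CC)\simeq\star$, computed earlier in the paper. Your appeal to the cylinder's compactly supported mapping class group plays the role of this last step but again presupposes the foliation.

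There is a second gap in your injectivity argument: the assertion that a trivial braid ``forces the fibrewise Dehn-twist framing to vanish'' is exactly the hard part and is not justified --- a priori the kernel of your monodromy map could contain twists supported over annuli in the base, and ruling these out amounts to the fibrewise classification you have not performed. The paper sidesteps this entirely with a formal argument: $\Br_n=\pi_1(\Conf(n))$ acts freely and transitively on $\pi_0$ of the fibre $\mF$ of $\Psi$, this action is supported near $C_{n+2}$ and hence commutes with the $\Symp_c(W)$-action (supported away from $C_{n+2}$), and $\pi_0(\Symp_c(W))$ acts freely on the orbit of the standard configuration; an elementary lemma on commuting free actions then yields the injection into $\Br_n$. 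Tellingly, the paper does \emph{not} claim surjectivity, precisely because that would require the kind of fibrewise analysis you propose; so even granting your foliation, your injectivity step would need to be either completed along those harder lines or replaced by the commuting-actions argument.
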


En route to proving these theorems, we prove and use:

\begin{thm}
Let $\omega$ be the canonical symplectic form on the cotangent bundle of
$\RP{2}$. The group of compactly supported symplectomorphisms
$\Symp_c(T^*\RP{2})$ is weakly homotopy equivalent to $\ZZ$.
\end{thm}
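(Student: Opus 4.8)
The plan is to exploit the classical identification of $T^*\RP{2}$ with the complement of a smooth conic in $\PP{2}$, and then to lift the problem to the branched double cover, where Seidel's analysis of $\Symp_c(T^*S^2)$ can be carried out equivariantly.

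First I would set up the compactification. Writing $\PP{2}=\mathrm{Sym}^2(\PP{1})$, the squaring map exhibits $\PP{1}\times\PP{1}$ as a double cover $\pi\colon\PP{1}\times\PP{1}\to\PP{2}$ branched over the discriminant conic $Q$, with deck transformation $\sigma$ the swap of the two factors. Since $\sigma$ fixes the diagonal $\Delta\cong\PP{1}$ pointwise and $\pi(\Delta)=Q$, one checks that $\PP{2}\setminus Q$ is symplectomorphic to $T^*\RP{2}$ (with its canonical form, up to scale) and that $\PP{1}\times\PP{1}\setminus\Delta\cong T^*S^2$, compatibly with $\pi$ and with the zero-sections. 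A neighbourhood-of-$Q$ argument then lets me replace $\Symp_c(T^*\RP{2})$ by the group $\mathcal{G}$ of symplectomorphisms of $\PP{2}$ supported away from a fixed neighbourhood of $Q$; because the support is compact and its complement connected, each such map has a canonical $\sigma$-equivariant lift, giving a weak homotopy equivalence $\mathcal{G}\simeq\Symp_c^{\sigma}(T^*S^2)$ with the group of $\sigma$-equivariant compactly supported symplectomorphisms upstairs.

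The core of the argument is then an equivariant version of Seidel's computation. By Theorem \ref{folnthm}, every compatible $J$ on $\PP{1}\times\PP{1}$ carries two transverse holomorphic foliations $\mathcal{F}_1,\mathcal{F}_2$ in the ruling classes; when $J$ is $\sigma$-invariant these are swapped by $\sigma$, and their images under $\pi$ are precisely the two tangent lines to $Q$ through each point of $T^*\RP{2}$. I would consider the space $\mathcal{J}^{\sigma}$ of $\sigma$-invariant compatible almost complex structures which are standard near $\Delta$; this space is contractible, and $\Symp_c^{\sigma}(T^*S^2)$ acts on it. Following the Abreu--McDuff--Seidel scheme, I would organise the resulting ruling data into a fibration whose total space is homotopy equivalent to the contractible $\mathcal{J}^{\sigma}$ and whose fibre is the stabiliser of a fixed equivariant pair of foliations. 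Identifying this stabiliser with the equivariant automorphisms of the standard integrable model reduces all the higher homotopy to that of the linear symmetry group of the fibration, which forces $\pi_i\bigl(\Symp_c(T^*\RP{2})\bigr)=0$ for $i\geq 1$.

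It remains to pin down $\pi_0$, and this is where I expect the real difficulty to lie. The group $\pi_0\Symp_c(T^*S^2)\cong\ZZ$ is generated by the Dehn twist $\tau$ along the zero-section sphere, and since $\sigma$ preserves that sphere and commutes with the round-metric model, $\tau$ is $\sigma$-equivariant and descends to a generator of an infinite cyclic subgroup of $\pi_0\Symp_c(T^*\RP{2})$; this subgroup is detected, as in Seidel's argument, by a Picard--Lefschetz/winding-number monodromy invariant that survives the equivariant reduction. The delicate point is to show that the equivariant mapping class group is \emph{exactly} $\ZZ$, with no extra $\ZZ/2$ and with the correct index: one must verify both that every $\sigma$-equivariant compactly supported symplectomorphism of $T^*S^2$ is isotopic through such maps to a power of $\tau$, and that $\tau$ itself (rather than $\tau^2$) descends and generates. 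I would establish this by comparing the monodromy computation upstairs with the descended model on $T^*\RP{2}$, using the explicit $\mathrm{Sym}^2$ picture to control the interaction of the Dehn twist with the swap $\sigma$ and with the branch locus $Q$.
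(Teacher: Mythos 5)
Your overall framework is the same as the paper's: compactify by symplectic cutting to the branched double cover $S^2\times S^2\to\PP{2}$ over the conic, identify $\Symp_c(T^*\RP{2})$ with the group of $\iota$-equivariant compactly supported symplectomorphisms upstairs via the unique lift that is the identity near the branch locus, and then run Gromov's two-foliations argument equivariantly. That part of your outline matches the paper and is sound, although the middle step needs to be stated more carefully: what the equivariant foliation/Moser argument actually proves is that the group $\mS^{\iota}_0$ of equivariant symplectomorphisms fixing the diagonal $\Delta$ \emph{pointwise} (and acting trivially on homology) is contractible, being homotopy equivalent to the contractible space $\mJ^{\iota}_0$ of invariant compatible structures. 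It does not directly ``force $\pi_i(\Symp_c(T^*\RP{2}))=0$ for $i\geq 1$,'' because the compactly supported group is a proper subgroup of $\mS^{\iota}_0$, and the difference between the two is exactly where the answer $\ZZ$ lives.

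The genuine gap is in your last paragraph, and you have half-diagnosed it yourself. You propose to exhibit a $\ZZ$ generated by a descended Dehn twist and then to prove separately, by an unspecified ``monodromy computation,'' that nothing else occurs; as written this is not an argument, and Picard--Lefschetz invariants alone will not show that every equivariant compactly supported class is a power of the twist. The missing structural step is the fibration
\[
\mS^{\iota}_1\longrightarrow\mS^{\iota}_0\longrightarrow\mG ,
\]
where $\mG$ is the group of symplectic gauge transformations of the normal bundle of $\Delta$ (weakly homotopy equivalent to $S^1$), the map is $\phi\mapsto d\phi|_{\nu\Delta}$, and the fibre $\mS^{\iota}_1$ is (by the symplectic neighbourhood theorem) weakly equivalent to the compactly supported group you want. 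Once $\mS^{\iota}_0\simeq\star$ is known, the long exact sequence of this fibration gives \emph{both} conclusions at once: $\pi_i(\mS^{\iota}_1)=0$ for $i\geq 1$ and $\pi_0(\mS^{\iota}_1)\cong\pi_1(\mG)\cong\ZZ$, with no separate surjectivity argument needed. In other words, the ``real difficulty'' you flag dissolves once you interpose the gauge group of the normal bundle to the branch locus between the pointwise stabiliser and the compactly supported subgroup; without that step your proof does not close.
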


\begin{thm}
Let $\omega$ be the product symplectic form on $\CC^*\times\CC$. The group of
compactly supported symplectomorphisms $\Symp_c(\CC^*\times\CC)$ is weakly
contractible.
\end{thm}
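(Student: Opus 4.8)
The plan is to imitate Gromov's foliation argument. First I would compactify $\CC^*\times\CC$ to $X=\PP{1}\times\PP{1}$, realising it as the complement of the divisor $D=D_0\cup D_\infty\cup D_F$, where $D_0=\{0\}\times\PP{1}$ and $D_\infty=\{\infty\}\times\PP{1}$ are two disjoint fibres of the projection $\pr_1\colon (u,v)\mapsto u$ and $D_F=\PP{1}\times\{\infty\}$ is a fibre of $\pr_2\colon (u,v)\mapsto v$. I would equip $X$ with a monotone product K\"ahler form, for which Theorem \ref{folnthm} applies verbatim. A standard exhaustion argument — working with product forms of growing total area so that ever larger compact regions of $(\CC^*\times\CC,\omega)$ embed symplectically into $X\setminus D$ — identifies $\Symp_c(\CC^*\times\CC)$, up to weak homotopy, with the group $\mG$ of symplectomorphisms of $X$ that are the identity on a neighbourhood $\nu(D)$ of $D$. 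Under this identification the two Gromov foliations restrict to a foliation of $\CC^*\times\CC$ by discs (the fibres of $\pr_1$, each meeting $D$ only along $D_F$) and by cylinders (the fibres of $\pr_2$, each meeting $D$ along $D_0$ and $D_\infty$), the promised inherited foliation.

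Next I would introduce the space $\mJ$ of $\omega$-compatible almost complex structures that equal the standard integrable structure $J_0$ on $\nu(D)$. Being the space of sections of a bundle with contractible fibres, $\mJ$ is weakly contractible, and by Theorem \ref{folnthm} every $J\in\mJ$ carries a pair of holomorphic foliations that coincide with the standard ruling near $D$ because $J=J_0$ there. The group $\mG$ acts on $\mJ$ by pushforward, and I would study the orbit map $\mG\to\mJ$, $\phi\mapsto\phi_*J_0$. Its fibre over $J_0$ consists of the $\phi\in\mG$ that are $J_0$-holomorphic; being biholomorphisms of $X$ equal to the identity on the open set $\nu(D)$, they are the identity everywhere, so the stabiliser is trivial. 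Granting that the orbit map is a Serre fibration onto all of $\mJ$ — equivalently that $\mG$ acts transitively up to homotopy — one concludes that $\mG$ is weakly homotopy equivalent to $\mJ$, hence weakly contractible, as desired.

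The entire content thus lies in this transitivity, which I expect to be the main obstacle. Given $J\in\mJ$, positivity of intersections shows that each leaf of the first foliation meets each leaf of the second transversally in a single point, so the two foliations yield a product decomposition $X\setminus D\cong\CC^*\times\CC$: the cylinder factor is the leaf space of the disc-foliation, with its two marked points sitting at $D_0$ and $D_\infty$, and the disc factor is the leaf space of the cylinder-foliation. Comparing this with the standard decomposition and running a fibrewise Moser argument should produce a compactly supported symplectomorphism carrying the standard foliations to those of $J$; a second pass, now within the contractible family of compatible structures sharing these foliations, would upgrade this to an element $\phi\in\mG$ with $\phi_*J_0=J$. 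The delicate points are: (i) keeping the construction genuinely compactly supported, i.e.\ equal to the identity near $D$ — since the foliations of $J$ already agree with the standard ones on $\nu(D)$ this should hold, but the two divisors $D_0,D_\infty$ at the two ends of the cylinder must be treated simultaneously and every reparametrisation of the disc-foliation's leaf space must fix the two marked points; and (ii) the smooth dependence on $J$ needed to make the orbit map a fibration, which rests on Gromov compactness and the automatic regularity of the foliation spheres in a symplectic $4$-manifold. Once transitivity and the triviality of the stabiliser are established, the fibration $\{\id\}\to\mG\to\mJ\simeq *$ yields the weak contractibility of $\Symp_c(\CC^*\times\CC)$.
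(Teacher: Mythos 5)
Your reduction to the compactification $X=\PP{1}\times\PP{1}$ and the divisor $D$ is the same starting point as the paper's (there $D=C_1\cup C_2\cup C_3$), but the core step of your argument fails: the orbit map $\phi\mapsto\phi_*J_0$ is not surjective onto $\mJ$, let alone a fibration with trivial fibres over it. The pushforward of the integrable structure $J_0$ by any diffeomorphism is again integrable, whereas $\mJ$ contains an abundance of non-integrable compatible almost complex structures (perturb $J_0$ in a small ball away from $D$); so the $\mG$-orbit of $J_0$ is a thin subset of $\mJ$ and no ``second pass'' can produce $\phi\in\mG$ with $\phi_*J_0=J$ for general $J$. What the Gromov--Abreu foliation argument actually yields is a map $B:\mJ\to\mG$ in the opposite direction (send $J$ to the Moser-corrected diffeomorphism matching its two foliations with the standard ruling) which is a homotopy inverse to $A:\phi\mapsto\phi_*J_0$; contractibility of $\mJ$ then gives weak contractibility of $\mG$ with no transitivity statement anywhere. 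Even in that corrected form you would still need to check: that the leaves of the $J$-foliations through points near $D$ coincide with the standard fibres (true if $J=J_0$ on full product neighbourhoods of the three components, by uniqueness of the leaves); that the Moser primitive can be chosen to vanish near $D$ (this needs $H^2(X,D;\RR)=0$, which does hold here); and the identification $\Symp_c(\CC^*\times\CC)\simeq\mG$ itself, which your ``growing total area'' exhaustion only gestures at and which the paper establishes via the Stein-structure machinery of Proposition \ref{twostein}.

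For comparison, the paper avoids the foliation-to-group argument for this space altogether (it uses that style of argument only for $T^*\RP{2}$). Instead it lets $\Symp_c(\CC^2)\simeq\Symp_c(X\setminus(C_2\cup C_3))$, already weakly contractible by Gromov, act on the space $\mC_0$ of standard symplectic spheres in the class $[\{0\}\times\PP{1}]$; it proves $\mC_0$ weakly contractible (Gromov's uniqueness theorem for $J$-spheres plus a Gompf isotopy normalising the intersection with $C_3$), identifies the stabiliser of $C_1$ with $\Symp_c(\CC^*\times\CC)$ up to weak equivalence, and reads the theorem off the long exact sequence of the orbit--stabiliser fibration. If you want to salvage your approach, recast it in the $A$/$B$ homotopy-inverse form above rather than as a transitivity claim.
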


\subsection{Comments}

Theorem \ref{milnorsymp} is not as strong as we would like: the injection is
probably also a surjection. I have been unable to prove this for technical
reasons. However, we can still deduce something more about the symplectic
mapping class group $\pi_0(\Symp_c(W))$ in the case $n\geq 4$. Khovanov and
Seidel demonstrated \cite{KS01} using Fukaya categories that the braid group
$\Br_n$ always injects into $\pi_0(\Symp_c(W))$. The composition of their
injection with ours is therefore a homomorphic injection $K:\Br_n\rightarrow
\Br_n$. While the braid group is not co-Hopfian, it is known \cite{BM06} that,
when $n\geq 4$, all such injections are of the form
\[\sigma_i\mapsto z^{\ell}\left(h^{-1}\sigma_i h\right)^{\pm 1}\]
where $\sigma_i$ are the usual generators, $h$ is a homeomorphism of the marked
disc, $z\in Z(\Br_n)$ is a full-twist and $\ell\in\ZZ$. In particular, we know
that the subgroup $K(\Br_n)$ is of finite index and that all intermediate
subgroups between $K(\Br_n)$ and $\Br_n$ are isomorphic to $\Br_n$, in
particular the group $\pi_0(\Symp_c(W))$ is abstractly isomorphic to $\Br_n$.

Seidel \cite{Sei08} has also obtained results in the direction of
theorem \ref{sympgrps}, specifically exhibiting a subgroup of
$\pi_0(\Symp_0(\DD_5))$ isomorphic to $\pi_0(\Diff^+(S^2,5))$. These results
were the starting point for this paper and will be reviewed in section
\ref{delpres}.

Note that the weak homotopy equivalences proved in the theorems actually suffices to prove homotopy equivalence since, by \cite{MS04} Remark 9.5.6, these topological groups have the homotopy type of a countable CW complex.

\subsection{Philosophy}

The methods of this paper are very classical, along the lines of \cite{Abr98} and \cite{LP04}; the only `hard' input is from Gromov's theory of pseudoholomorphic curves. In each case we will identify a divisor in our symplectic manifold (or in a compactification of it). The divisor will only have spherical components. The idea is to relate the topology of the symplectomorphism group to the topology of the `symplectomorphism group' of the divisor, i.e. the group of symplectomorphisms of the components which fix intersection points between different components. For instance, as observed by Seidel, the divisor we use in $\DD_5$ contains a component with five intersection points, leading to the pure braid group on five strands. The difficulty is in working out the precise relationship of this to the symplectomorphism group and showing there is no topology invisible to this divisor. This involves examining a number of long exact sequences of homotopy groups coming from fibrations and understanding the topology of the compactly supported symplectomorphism group of the complement of the divisor.

One might ask why we stopped at $\DD_5$. The reason is that the complement of a conic in $\PP{2}$ (symplectically $T^*\RP{2}$) occurs naturally as a Zariski open subset here. $\DD_6$ is a cubic surface, so one might hope to understand its symplectomorphism group if one could understand symplectomorphisms of the affine cubic surface. However, this does not admit a foliation by rational holomorphic curves, which is ultimately what allows us to understand $\Symp_c(T^*\RP{2})$.

\subsection{Overview}

Section \ref{stein} reviews the properties of non-compact symplectic manifolds and their (compactly supported) symplectomorphism groups relevant for later arguments. The aim of this section is proving Proposition \ref{twostein}, which shows that if two (possible non-complete) symplectic manifolds arise from different plurisubharmonic functions on the same complex manifold then their compactly supported symplectomorphism groups are weakly homotopy equivalent. This will be useful later as we will be able to identify the symplectomorphism groups of complements of ample divisors by identifying the complement up to \emph{biholomorphism}.

Section \ref{rp2} performs the computation of $\pi_k(\Symp_c(T^*\RP{2}))$ and can actually be read independently of the rest of the paper. It closely follows \cite{Sei98}.

In section \ref{cstarcsect} we carry out the computation in detail for $\CC^*\times\CC$. The only real difficulty is in proving weak contractibility of a space of symplectic spheres (playing the role of the divisor mentioned above). This proof is postponed to the end of the section and one crucial lemma is proved in the appendix.

We proceed to calculate the weak homotopy type of the symplectomorphism groups of $\DD_3$, $\DD_4$ and $\DD_5$ in section \ref{delp} and of the $A_n$-Milnor fibre in section \ref{milnorsect}. Many of the proofs are similar to the case $\CC^*\times\CC$, so details are sketched noting where technicalities arise.

\subsection{Acknowledgements}

During this work I was funded by a Faulkes Studentship in Geometry. This work
will form part of my PhD thesis and it gives me pleasure to
acknowledge the many stimulating conversations with my PhD supervisor, Ivan
Smith, which have led to this paper. I would also like to thank Jack Waldron,
Mark McLean, Chris Wendl, Jarek K\k{e}dra, Gabriel Paternain, Dusa McDuff and the paper's referee for useful comments, encouragement and corrections. Particular
thanks go to Jack Button for his comments and suggestions about the co-Hopfian
properties of groups and to Chris Wendl for explaining steps in his paper
\cite{Wen08}. The ideas and arguments in this paper are heavily inspired by
sections of Seidel's marvellous survey \cite{Sei08} and Abreu's beautiful paper
\cite{Abr98}. Since preparing this paper, the author has learned that the results on monotone
symplectic Del Pezzo surfaces have also been obtained independently by Martin Pinsonnault \cite{Pin}.

\section{Stein manifolds}\label{stein}

\subsection{Stein manifolds and Liouville flows}

\begin{dfn}[Plurisubharmonic functions, Stein manifolds]
A function $\phi:W\rightarrow[0,\infty)$ on a complex manifold $(W,J)$ is
called \emph{plurisubharmonic} if it is proper and if $\omega=-d(d\phi\circ J)$
is a positive $(1,1)$-form. A triple $(W,J,\phi)$ consisting of a complex
manifold $(W,J)$ and a plurisubharmonic function $\phi$ on $(W,J)$ is called a
\emph{Stein manifold}. A sublevel set $\phi^{-1}[0,c]$ of a Stein manifold
$(W,J,\phi)$ is called a Stein domain.
\end{dfn}

Given a Stein manifold $(W,J,\phi)$ one can associate to it:
\begin{itemize}
\item a 1-form $\theta=-d\phi\circ J$ which is a primitive for the positive
$(1,1)$-form $\omega$,
\item a vector field $Z$ which is $\omega$-dual to $\theta$.
\end{itemize}
\noindent We call $\theta$ the Liouville form and $Z$ the Liouville vector
field. Let us abstract the notions of Liouville form and vector field further:

\begin{dfn}
A \emph{Liouville manifold} is a triple $(W,\theta,\phi)$ consisting of a
smooth manifold $W$, a (Liouville) 1-form $\theta$ on $W$ and a proper function
$\phi:W\rightarrow[0,\infty)$ such that
\begin{itemize}
\item $\omega=d\theta$ is a symplectic form,
\item there is a monotonic sequence $c_i\rightarrow\infty$ such that on the
level sets $\phi^{-1}(c_i)$ the Liouville vector field $Z$ which is
$\omega$-dual to $\theta$, satisfies $d\phi(Z)>0$ everywhere on
$\phi^{-1}(c_i)$.
\end{itemize}
\noindent $(W,\theta,\phi)$ is said to have \emph{finite-type} if there is a
$k>0$ such that $d\phi(Z)>0$ at $x$ for any $x\in\phi^{-1}(k,\infty)$. It is
said to be \emph{complete} if the Liouville vector field is complete. A closed
sublevel set $\phi^{-1}[0,k]$ of a Liouville manifold is called a
\emph{Liouville domain}.
\end{dfn}

Clearly a Stein manifold inherits the structure of a Liouville manifold and a
Stein domain inherits the structure of a Liouville domain.

On a Liouville domain, the negative-time Liouville flow always exists and one
can use it to define an embedding $\Coll:(-\infty,0]\times\partial W\rightarrow
W$ with $\Coll^*\theta=e^r\theta|_{\partial W}$ and $\Coll_*\partial_r=Z$. We
can therefore form the \emph{symplectic completion} of $(W,\theta,\phi)$, which
is a Liouville manifold:

\begin{dfn}[Symplectic completion]
Let $(W,\theta,\phi)$ be a Liouville domain with boundary $M$ and define
$\alpha=\theta|_M$. The \emph{symplectic completion} $(\hat{W},\hat{\theta})$
of $(W,\theta)$ is the manifold $\hat{W}=W\cup_{\Coll}(-\infty,\infty)\times M$
equipped with the 1-form $\hat{\theta}|_{W}=\theta$,
$\hat{\theta}|_{(-\infty,\infty)\times M}=e^r\alpha$. There is an associated
symplectic form $\hat{\omega}=d\hat{\theta}$ and Liouville field
$\hat{Z}|_{W}=Z$, $\hat{Z}|_{(-\infty,\infty)\times W}=\Coll_*\partial_r$ whose
flow exists for all times. We may take $\hat{\phi}$ to be any smooth extension
of $\phi$ which agrees with $r$ outside some compact subset.
\end{dfn}

There is also a standard construction given a (possibly incomplete) finite-type
Stein manifold $(W,J,\phi)$ to obtain a complete finite-type Stein manifold.
Let $h$ denote the function $h(x)=e^x-1$.

\begin{lma}[\cite{BC01}, Lemma 3.1 and \cite{SS05} Lemma 6]
Define $\phi_h:=h\circ\phi$. $(W,J,\phi_h)$ is a complete Stein manifold of
finite-type.
\end{lma}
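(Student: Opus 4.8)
The plan is to verify the three defining properties in turn — that $\phi_h$ is plurisubharmonic (so that $(W,J,\phi_h)$ is Stein), that it is of finite type, and that its Liouville field is complete — driving all three steps by an explicit comparison of the new Liouville data $(\theta_h,\omega_h,Z_h)$ with the old $(\theta,\omega,Z)$. Since $\theta_h=-d\phi_h\circ J=h'(\phi)\,\theta$, differentiating gives the single identity $\omega_h=d\theta_h=h''(\phi)\,d\phi\wedge\theta+h'(\phi)\,\omega$, which feeds every subsequent computation.

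For plurisubharmonicity I would evaluate $\omega_h$ on a pair $(v,Jv)$. Using $\theta=-d\phi\circ J$ one computes $(d\phi\wedge\theta)(v,Jv)=(d\phi(v))^2+(d\phi(Jv))^2\ge 0$, so that
\[\omega_h(v,Jv)=h''(\phi)\big((d\phi(v))^2+(d\phi(Jv))^2\big)+h'(\phi)\,\omega(v,Jv).\]
Since $h(x)=e^x-1$ has $h'(x)=h''(x)=e^x>0$ and $\omega$ is a positive $(1,1)$-form, the right-hand side is strictly positive for $v\neq 0$; hence $\omega_h$ is positive. Properness of $\phi_h$ is immediate because $h$ is an increasing homeomorphism of $[0,\infty)$ fixing $0$, so $\phi_h^{-1}[0,c]=\phi^{-1}[0,h^{-1}(c)]$ is compact. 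Thus $(W,J,\phi_h)$ is Stein.

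Next I would identify the new Liouville field. Seeking $Z_h=\alpha Z$ with $\iota_{Z_h}\omega_h=\theta_h$, and using $\iota_Z\omega=\theta$, $\theta(Z)=0$, and $\iota_Z(d\phi\wedge\theta)=d\phi(Z)\,\theta$, one finds $\alpha\big(h'(\phi)+h''(\phi)\,d\phi(Z)\big)=h'(\phi)$, i.e.
\[Z_h=\frac{1}{1+d\phi(Z)}\,Z.\]
The key observation is that $d\phi(Z)=|Z|_g^2\ge 0$, where $g=\omega(\cdot,J\cdot)$ makes $Z$ the $g$-gradient of $\phi$; hence $1+d\phi(Z)\ge 1$ never vanishes and $Z_h$ is a strictly positive rescaling of $Z$. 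Consequently $Z_h$ vanishes exactly where $Z$ does and $d\phi_h(Z_h)=h'(\phi)\,d\phi(Z)/(1+d\phi(Z))$ is positive precisely where $d\phi(Z)>0$, so finite type of $(W,J,\phi)$ on $\phi^{-1}(k,\infty)$ transfers to finite type of $(W,J,\phi_h)$ on $\phi_h^{-1}(h(k),\infty)$.

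Completeness is the step I expect to be the real point of the lemma, and it is where the exponential is essential. Along a trajectory $x(t)$ of $Z_h$ one has $\tfrac{d}{dt}\phi(x(t))=d\phi(Z_h)=d\phi(Z)/(1+d\phi(Z))$, which lies in $[0,1)$. Hence $\phi$ grows at most linearly, staying in a bounded interval over any finite time span in either direction, so by properness the trajectory remains in a compact sublevel set $\phi^{-1}[0,c]$ and cannot escape in finite time; the flow of $Z_h$ is complete. The content is that $h=e^x-1$ is calibrated so that $h''/h'\equiv 1$, which is exactly what bounds the flow-speed $d\phi(Z_h)<1$ even when $|Z|_g$ blows up near infinity — for a slower-growing $h$ this estimate would fail. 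The only genuine subtlety to watch is confirming $d\phi(Z)\ge 0$ globally via the gradient interpretation, since that non-negativity underlies both the non-vanishing of $1+d\phi(Z)$ and the uniform speed bound.
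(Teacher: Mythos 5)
The paper offers no proof of this lemma --- it is quoted directly from \cite{BC01} (Lemma 3.1) and \cite{SS05} (Lemma 6) --- so there is nothing internal to compare against; your argument is correct and is essentially the standard one from those references. The three key computations all check out: $\omega_h=h''(\phi)\,d\phi\wedge\theta+h'(\phi)\,\omega$ with $(d\phi\wedge\theta)(v,Jv)=(d\phi(v))^2+(d\phi(Jv))^2\ge 0$ gives positivity; $Z=\nabla_g\phi$ gives $d\phi(Z)=|Z|_g^2\ge 0$ and hence the well-defined rescaling $Z_h=(1+d\phi(Z))^{-1}Z$; and the resulting uniform speed bound $d\phi(Z_h)<1$ (the point where $h''/h'\equiv 1$ enters) combined with properness of $\phi$ yields completeness via the standard escape lemma.
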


In fact, $\phi$ and $\phi_h$ have the same critical points. If $\theta$ and $Z$
are the Liouville form and vector field on $(W,J,\phi)$ then we write
$\theta_h$ and $Z_h$ for the corresponding data on $(W,J,\phi_h)$. This
construction is related to the symplectic completion of a sublevel Stein domain
of $\phi$:

\begin{lma}\label{affinecomplete}
Let $k$ be such that $\phi^{-1}[0,k)$ contains all the critical points of
$\phi$ and set $W_k=\phi^{-1}[0,k]$, which inherits the structure of a
Liouville domain. Then $(\hat{W}_k,d\hat{\theta})$ is symplectomorphic to
$(W,d\theta_h)$.
\end{lma}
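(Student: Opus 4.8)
The plan is to realise the symplectomorphism by flowing along the Liouville vector field, so the first step is to make the relationship between the two Liouville structures on $W$ completely explicit. Since $\phi_h=e^\phi-1$, a one-line computation gives $\theta_h=-d\phi_h\circ J=e^\phi\,\theta$, and hence $\omega_h=d\theta_h=e^\phi(d\phi\wedge\theta+\omega)$. Contracting with the ansatz $Z_h=\lambda Z$ and using $\theta(Z)=0$ together with $d\phi(Z)=|Z|^2_g$ (which is positive away from the critical points of $\phi$, since $W$ is Stein), I find $\lambda=(1+d\phi(Z))^{-1}>0$ on the region above the critical values. Thus $Z$ and $Z_h$ have the same unparametrised trajectories there, $Z_h$ is complete by the previous lemma, and $\phi$ increases strictly along these trajectories.

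Second, I would set up the completion picture on the target side. Because $\phi$ and $\phi_h$ have the same sublevel sets, $W_k=\phi^{-1}[0,k]=\phi_h^{-1}[0,e^k-1]$, and $M=\partial W_k$ carries the contact form induced by either Liouville form. I would show that the flow-out map $(\tau,m)\mapsto\psi^\tau_{Z_h}(m)$ is a diffeomorphism from $[0,\infty)\times M$ onto $W\setminus\mathrm{int}\,W_k$: completeness of $Z_h$ makes it defined for all $\tau\ge 0$, strict monotonicity of $\phi$ along trajectories gives injectivity, and properness of $\phi$ together with $d\phi_h(Z_h)>0$ forces every trajectory to exit every level downward in finite backward time, giving surjectivity and $\phi_h\to\infty$ along each ray. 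The standard Liouville identity $\mathcal{L}_{Z_h}\theta_h=\theta_h$ (which holds because $\theta_h(Z_h)=0$) integrates to $(\psi^\tau_{Z_h})^*\theta_h=e^\tau\theta_h$, so the flow-out map pulls $\theta_h$ back to $e^\tau(\theta_h|_M)$; up to the constant shift $\tau\mapsto\tau-k$ this is exactly the model $e^r\alpha$ on the cylindrical end of $\hat W_k$. This exhibits $(W,d\theta_h)$ as the completion of $W_k$ equipped with its $\theta_h$-structure.

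The remaining, and I expect hardest, step is to reconcile this with the fact that $W_k$ inherits its Liouville-domain structure from $\theta$ rather than $\theta_h$: on the interior of $W_k$ the two symplectic forms $d\theta$ and $d\theta_h=d(e^\phi\theta)$ genuinely differ, so the symplectomorphism cannot be taken to be the identity there. The clean way to finish is a Moser-type stability argument for the Liouville homotopy $\theta_t=e^{t\phi}\theta=-d\!\left(t^{-1}e^{t\phi}\right)\circ J$, $t\in[0,1]$, which interpolates $\theta_0=\theta$ and $\theta_1$ (agreeing with $\theta_h$) through Stein structures on $(W,J)$ with an unchanging critical locus. The technical heart is to check that completeness can be arranged uniformly along the family and that the resulting time-dependent normalised Liouville field integrates to a global diffeomorphism intertwining the completions; this is where the finite-type and properness hypotheses are really used, and it is the step most in need of care. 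Combining the two reductions identifies $\hat W_k$ with $(W,d\theta_h)$ and proves the lemma.
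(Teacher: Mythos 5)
Your first two steps are correct and are a more explicit route than the paper takes: the computation $\theta_h=e^\phi\theta$, $Z_h=(1+d\phi(Z))^{-1}Z$ and the flow-out identification of $W\setminus\mathrm{int}\,W_k$ with a half-infinite cone over $\partial W_k$ together show that $(W,d\theta_h)$ is the symplectic completion of $(W_k,\theta_h|_{W_k})$. The paper gets this part implicitly, by noting that $\phi_h$ has no critical points outside (the image of) $W_k$ and then conjugating by the two Liouville flows.

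The gap is in your third step, and it is not merely technical. The homotopy $\theta_t=e^{t\phi}\theta$ of Liouville forms \emph{on all of $W$} cannot be fed into a Moser argument, because its endpoint $\theta_0=\theta$ is the given, possibly incomplete, structure: completeness cannot be ``arranged'' there, since $\theta_0$ is fixed by hypothesis. Worse, the conclusion such an argument would deliver --- a global diffeomorphism of $W$ carrying $d\theta$ to $d\theta_h$ --- is false in general. If the Liouville flow of $\theta$ blows up in finite time $T$ (e.g.\ $W$ the interior of a compact Stein domain), then your own step~2 applied to $\theta$ shows $(W,d\theta)\cong W_k\cup\bigl([0,T)\times M,\,d(e^r\alpha)\bigr)$, which has finite total volume, whereas $(W,d\theta_h)\cong\hat W_k$ has infinite volume. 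So no symplectomorphism $(W,d\theta)\to(W,d\theta_h)$ exists, and the time-dependent Moser field must fail to integrate. This is precisely why the lemma is phrased in terms of $\hat W_k$ rather than $(W,d\theta)$.

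The missing idea is to replace the bad endpoint of the homotopy. The paper chooses a convex increasing $h_0$ with $h_0(x)=x$ for $x\le k$ and $h_0=h$ for $x>k'$, and interpolates $h_t=th+(1-t)h_0$. Every $\phi_t=h_t\circ\phi$ is then a \emph{complete} finite-type Stein structure (exponential growth at infinity for all $t$, including $t=0$), so the Moser-type result of Seidel--Smith applies and yields $f_t$ with $f_t^*\theta_t=\theta_0+dR_t$, $R_t$ compactly supported; since $\theta_0$ still restricts to $\theta$ on $W_k$, the map $f_1|_{W_k}$ is an exact (up to $dR_1$) symplectic embedding of $(W_k,d\theta)$ into $(W,d\theta_h)$, which the Liouville flows then extend over the cone end. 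An alternative repair of your plan that stays closer to its spirit: run the homotopy $e^{t\phi}\theta|_{W_k}$ only on the \emph{compact} domain $W_k$ (each member is a Liouville domain structure, since $d\phi(Z_t)>0$ on $\partial W_k$) and invoke invariance of symplectic completions under Liouville homotopy of domains; combined with your step~2 this also closes the argument, but that invariance statement is itself proved by extending the family over the cylindrical end, i.e.\ by the same completeness manoeuvre you cannot perform on $W$ with $\theta_0=\theta$.
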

\begin{proof}
Pick some $k'>k$. Let $h_0$ be a function such that:
\begin{itemize}
\item $h_0(x)=h(x)$ if $x>k'$,
\item $h_0(x)=x$ if $x\leq k$,
\item $h_0'(x)>0$, $h_0''(x)>0$ for all $x\in[0,\infty)$.
\end{itemize}
\noindent Define $h_t=th+(1-t)h_0$. Now consider the functions
$\phi_t=h_t\circ\phi$ on $W$ for $t\in[0,1]$. By Lemma 6 of \cite{SS05} they
are all plurisubharmonic functions making $(W,J,\phi_t)$ into a complete Stein
manifold and they all have the same set of critical points. Let $\theta_t$
denote $d^c\phi_t$. By Lemma 5 of \cite{SS05} there is a smooth family of
diffeomorphisms $f_t:W\rightarrow W$ such that $f_t^*\theta_t=\theta_0+dR_t$
for some compactly-supported function $R: W\times[0,1]\rightarrow\RR$ with
$R_t(x)=R(x,t)$.

By construction, $\phi_1=\phi_h$ and $\phi_0|_{W_k}=\phi|_{W_k}$. Therefore
$\iota=f_1|_{W_k}$ is a symplectic embedding of $W_k$ into $(W,d\theta_h)$ such
that $\iota^*\theta_h=\theta|_{W_k}+dR_1$ and there are no critical points of
$\phi_h$ in the complement $W\setminus\iota(W_k)$.

We want to extend $\iota$ to a symplectomorphism
$\tilde{\iota}:(\hat{W}_k,d\hat{\theta}\rightarrow (W,d\theta_h)$. Begin by
replacing $\hat{\theta}$ by $\hat{\theta}_R=\hat{\theta}+dR_1$. Of course
$d\hat{\theta}=d\hat{\theta}_R$ still as $dR_1$ is closed. Let $Z_R$ be the
$d\hat{\theta}$-dual Liouville field to $\hat{\theta}_R$. Then, on $\iota(W_k)$
$\iota_*Z_R=Z_h$. If $\Phi^t_R$ and $\Phi^t_h$ denote the time $t$ Liouville
flows on $(\hat{W}_k,\hat{\theta}_R)$ and $(W,\theta_h)$ respectively then
\[\tilde{\iota}=\Phi^{t}_h\circ\iota\circ\Phi^{-t}_R:\hat{W}_k\rightarrow W\]
\noindent defines the required symplectomorphism. Since we have used $\Phi^t_R$
and $\Phi^{-t}_R(x)=\Phi^{-t}_h(x)$ for $x\in W_k$ (by equality of Liouville
vector fields), $\tilde{\iota}|_{W_k}=\iota$.
\end{proof}

We note another lemma for convenience.

\begin{lma}\label{otherplush}
If $\phi_1$ and $\phi_2$ are complete plurisubharmonic functions on $(W,J)$
with finitely many critical points then $(W,-dd^c\phi_1)$ is symplectomorphic
to $(W,-dd^c\phi_2)$.
\end{lma}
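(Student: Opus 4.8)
The plan is to deform $\phi_1$ into $\phi_2$ through plurisubharmonic functions and apply a non-compact Moser argument, exactly parallel to the proof of lemma \ref{affinecomplete}. Set $\phi_t = (1-t)\phi_1 + t\phi_2$ for $t \in [0,1]$, let $\theta_t = -d\phi_t \circ J$ be the associated Liouville form and $\omega_t = d\theta_t = -d(d\phi_t \circ J)$ the associated $(1,1)$-form, so that $\omega_0 = -d(d\phi_1\circ J)$ and $\omega_1 = -d(d\phi_2 \circ J)$ are the two forms in the statement. It suffices to produce a diffeomorphism $f_1$ of $W$ with $f_1^*\theta_1 = \theta_0 + dR_1$ for some function $R_1$, since then $f_1^*\omega_1 = d(f_1^*\theta_1) = d\theta_0 = \omega_0$, exhibiting $f_1$ as the required symplectomorphism.

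First I would verify that each $(W,J,\phi_t)$ is again a complete finite-type Stein manifold. Positivity of the $(1,1)$-form is a convex condition, so $\omega_t = (1-t)\omega_0 + t\omega_1$ is positive; properness follows from $\phi_t \geq (1-t)\phi_1$ together with properness of $\phi_1$. For the finite-type condition, observe that $Z_t$ is the gradient of $\phi_t$ for the K\"{a}hler metric $g_t(u,v) = \omega_t(u, Jv)$, since $g_t(Z_t, v) = \omega_t(Z_t, Jv) = \theta_t(Jv) = d\phi_t(v)$; hence $d\phi_t(Z_t) = |\nabla\phi_t|^2_{g_t}$ is positive away from the critical points of $\phi_t$, and finite type amounts to these critical points lying in a fixed compact set. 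Completeness of the family, and the confinement of its critical loci, follow as in lemma \ref{affinecomplete} from lemma 6 of \cite{SS05}, using the hypothesis that $\phi_1$ and $\phi_2$ have only finitely many critical points.

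With the family in hand, lemma 5 of \cite{SS05} supplies a smooth family of diffeomorphisms $f_t : W \to W$ with $f_t^*\theta_t = \theta_0 + dR_t$ for a compactly supported $R : W \times [0,1] \to \RR$; evaluating at $t=1$ gives exactly the relation needed above. The main obstacle is precisely where this departs from the compact Moser theorem: the primitive difference $\theta_1 - \theta_0 = -(d\phi_2 - d\phi_1)\circ J$ is in no way compactly supported, so the naive Moser vector field need not be complete and its time-one flow need not be a self-diffeomorphism of $W$. What rescues the argument is that outside a compact set containing all the critical loci the Stein structures are controlled by the (complete) Liouville flow, and lemma 5 of \cite{SS05} uses this to absorb the non-compact part of the deformation into the flow, leaving only the compactly supported exact correction $dR_t$. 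Consequently the only genuine work is checking the hypotheses of that lemma, namely that the interpolating family really is a complete finite-type Stein family with uniformly compact critical locus, which is the content of the previous paragraph.
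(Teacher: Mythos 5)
Your proposal is correct and takes essentially the same route as the paper: both apply lemma 5 of \cite{SS05} to the linear interpolation $\phi_t$ between $\phi_1$ and $\phi_2$, using convexity of the positivity condition to see that the whole family remains plurisubharmonic. The paper's proof is a two-line version of yours; your additional verification that the interpolated family stays complete and of finite type is exactly the implicit content being delegated to \cite{SS05}.
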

\begin{proof}
This follows from applying Lemma 5 from \cite{SS05} to the family
$t\phi_1+(1-t)\phi_2$: the associated 2-forms are all compatible with $J$ and
hence all symplectic.
\end{proof}

\subsection{Symplectomorphism groups}

\begin{dfn}
Let $(W,\omega)$ be a non-compact symplectic manifold and let $\mK$ be the set
of compact subsets of $W$. For each $K\in\mK$ let $\Symp_K(W)$ denote the group
of symplectomorphisms of $W$ supported in $K$, with the topology of
$\mC^{\infty}$-convergence. The group $\Symp_c(W,\omega)$ of
compactly-supported symplectomorphisms of $(W,\omega)$ is topologised as the
direct limit of these groups under inclusions.
\end{dfn}

One important fact we will tacitly and repeatedly use in the sequel is:
\begin{lma}
Let $C$ be compact and $f:C\rightarrow\Symp_c(W,\omega)$ be continuous. Then
the image of $f$ is contained in $\Symp_K(W,\omega)$ for some $K$.
\end{lma}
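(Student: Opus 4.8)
The plan is to reduce the directed colimit defining $\Symp_c(W,\omega)$ to a sequential one, and then to invoke the standard fact that a compact subset of a sequential colimit built from closed $T_1$ inclusions must sit inside a single term. Since $f(C)$ is the continuous image of a compact space it is itself compact, so it suffices to prove that an arbitrary compact subset $C'\subset\Symp_c(W,\omega)$ lies in some $\Symp_K(W,\omega)$.

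First I would exhaust $W$. As $W$ is a (second-countable, locally compact, Hausdorff) manifold it is $\sigma$-compact, so one may choose compact sets $K_1\subset K_2\subset\cdots$ with $K_i\subset\mathrm{int}\,K_{i+1}$ and $\bigcup_i K_i=W$. Any compact $K\subset W$ is covered by the open sets $\mathrm{int}\,K_i$ and hence contained in some $K_i$; thus $\{K_i\}$ is cofinal in the directed set $\mK$ of all compact subsets. A cofinal subsystem computes the same colimit: the underlying set is unchanged, and the two final topologies agree because every inclusion $\Symp_K\hookrightarrow\Symp_c(W,\omega)$ factors continuously through some $\Symp_{K_i}$. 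Therefore $\Symp_c(W,\omega)$ is homeomorphic to $\varinjlim_i\Symp_{K_i}(W,\omega)$.

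Next I would record the two properties of this sequence that make the argument run. Each $\Symp_{K_i}(W,\omega)$ with its $C^\infty$ topology is Hausdorff, hence in particular $T_1$. Moreover the inclusion $\Symp_{K_i}\hookrightarrow\Symp_{K_{i+1}}$ is a closed embedding, because the condition of being the identity on the open set $W\setminus K_i$ is preserved under $C^0$- (a fortiori $C^\infty$-) convergence, so that $\Symp_{K_i}$ is closed in $\Symp_{K_{i+1}}$. This is the one point where the compactly-supported (``identity outside a compact set'') hypothesis is genuinely used, and I expect it to be the only substantive input.

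Finally I would run the usual argument by contradiction. For $g\in\Symp_c(W,\omega)$ define its level $\ell(g)=\min\{i:g\in\Symp_{K_i}\}$. If $C'$ lay in no single $\Symp_{K_i}$ then $\ell$ would be unbounded on $C'$, so I could choose $g_n\in C'$ with $\ell(g_1)<\ell(g_2)<\cdots$. The set $S=\{g_n\}$ then meets each $\Symp_{K_m}$ in the finite set $\{g_n:\ell(g_n)\leq m\}$, which is closed by the $T_1$ property, and the same holds for every subset of $S$. Hence, by the definition of the colimit topology, every subset of $S$ is closed in $\Symp_c(W,\omega)$, so $S$ is an infinite closed discrete subset of the compact set $C'$ --- a contradiction. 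Therefore $C'$, and in particular $f(C)$, lies in a single $\Symp_{K_i}$, which proves the lemma.
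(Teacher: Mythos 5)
Your argument is correct. Note that the paper itself states this lemma without proof, treating it as a standard fact about direct limit topologies, so there is no proof to compare against; what you have written is a complete and accurate justification of exactly the fact being invoked. The three ingredients you isolate are the right ones: cofinality of a compact exhaustion $K_1\subset K_2\subset\cdots$ in the directed set $\mK$ (which reduces the colimit to a sequential one without changing the topology, since each $\Symp_K$ sits inside some $\Symp_{K_i}$ as a subspace), the observation that each inclusion $\Symp_{K_i}\hookrightarrow\Symp_{K_{i+1}}$ is a closed embedding because ``equals the identity off $K_i$'' is a closed condition under $C^0$-convergence, and the standard discrete-infinite-subset contradiction, which works because a subset of the colimit is closed precisely when its intersection with each $\Symp_{K_i}$ is closed there, and finite subsets of a $T_1$ space are closed. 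The only point worth making explicit is that the subspace topology induced on $\Symp_{K_i}$ from $\Symp_{K_{i+1}}$ coincides with its own $C^\infty$-topology (both being the topology of $C^\infty$-convergence for maps supported in a fixed compact set), which is what makes the cofinal comparison and the closed-embedding claim fit together; this is immediate but is the hinge of the argument.
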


Let us restrict attention to the class of Liouville manifolds $(W,\theta,\phi)$
which are of finite-type and satisfy:
\begin{cnd}\label{wellgraded}
There is a $T\in[0,\infty]$, a $K\in[0,\infty)$ and an exhausting function
$f:[0,T)\rightarrow[K,\infty)$ such that:
\begin{itemize}
\item the Liouville flow of any point on $\phi^{-1}(K)$ is defined until time
$T$,
\item for all $t\in[0,T)$, the Liouville flow defines a diffeomorphism
$\phi^{-1}(K)\rightarrow\phi^{-1}(f(t))$.
\end{itemize}
\end{cnd}
Examples of such Liouville manifolds include Stein manifolds (where the
Liouville flow is the gradient flow of $\phi$ with respect to the K\"{a}hler
metric) and symplectic completions of Liouville domains.

A Liouville manifold $(W,\theta,\phi)$ comes with a canonical family of compact
subsets $W_r=\phi^{-1}[0,r]$ with natural inclusion maps
$\iota_{r,R}:W_r\hookrightarrow W_R$ for all $r<R$. This family is
\emph{exhausting}, that is $\bigcup_{r\in[0,\infty)}W_r=W$. When
$(W,\theta,\phi)$ satisfies condition \ref{wellgraded} we can use this family
to better understand the homotopy type of the symplectomorphism group
$\Symp_c(W,d\theta)$.

\begin{prp}\label{WGimpliesgood}
If $(W,\theta,\phi)$ is a Liouville manifold satisfying condition
\ref{wellgraded} and $K\in[0,\infty)$ is such that $W_K=\phi^{-1}[0,K)$
contains all critical points of $\phi$ then $\Symp_c(W_K,d\theta|_{W_K})$ is
weakly homotopy equivalent to $\Symp_c(W,d\theta)$.
\end{prp}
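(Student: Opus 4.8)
The plan is to show that the evident homomorphism
$j\colon\Symp_c(W_K,d\theta|_{W_K})\to\Symp_c(W,d\theta)$, which extends a compactly supported symplectomorphism of the open set $W_K$ by the identity, is a weak homotopy equivalence; since both sides are topological groups it suffices to check that $j$ induces a bijection on $\pi_n$ for every $n\geq 0$. The one tool I would use throughout is conjugation by the Liouville flow $\Phi^t$. Because $Z$ is $\omega$-dual to $\theta$ we have $\mathcal L_Z\omega=d\iota_Z\omega=d\theta=\omega$, so $(\Phi^t)^*\omega=e^t\omega$. Consequently, for any $g\in\Symp_c(W)$ the conjugate $\Phi^{-s}g\,\Phi^{s}$ is again a symplectomorphism (the scaling factors $e^{-s}$ and $e^{s}$ cancel), it is again compactly supported, and its support is exactly $\Phi^{-s}(\mathrm{supp}\,g)$. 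Thus conjugation by negative-time Liouville flow is a self-map of $\Symp_c(W)$ that shrinks supports inward, and $s\mapsto\Phi^{-s}g\,\Phi^{s}$ is a continuous path in $\Symp_c(W)$ with support uniformly bounded over $s$ in a compact interval, hence continuous for the direct-limit topology.

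For surjectivity of $j_*$: given $\gamma\colon S^n\to\Symp_c(W)$, the compactness lemma places its image in some $\Symp_{K'}(W)$ with $K'=\phi^{-1}[0,c]$ compact. Condition~\ref{wellgraded}, together with finite-type-ness (so that $d\phi(Z)>0$ on $\phi^{-1}[K,\infty)$), guarantees that the negative Liouville flow is defined for all $s\geq 0$ on $\phi^{-1}[K,\infty)$ and carries the level set $\phi^{-1}(f(t))$ back down to $\phi^{-1}(K)$; hence for $s$ large enough $\Phi^{-s}(K')\subseteq W_K$. The family $(t,p)\mapsto \Phi^{-st}\gamma(p)\Phi^{st}$, $t\in[0,1]$, is then a homotopy in $\Symp_c(W)$ from $\gamma$ to a family supported in $W_K$, i.e.\ to an element of $\mathrm{im}\,j_*$. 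Thus $[\gamma]\in\mathrm{im}\,j_*$.

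For injectivity of $j_*$: suppose $\gamma\colon S^n\to\Symp_c(W_K)$ has $j\circ\gamma$ null-homotopic in $\Symp_c(W)$, via $\Gamma\colon D^{n+1}\to\Symp_c(W)$ with image in some $\Symp_{K''}(W)$. Picking $s$ with $\Phi^{-s}(K'')\subseteq W_K$, the conjugated map $\Phi^{-s}\Gamma\,\Phi^{s}$ is a null-homotopy now taking values in $\Symp_c(W_K)$. It remains to connect $\gamma$ to $\Phi^{-s}\gamma\,\Phi^{s}$ \emph{inside} $\Symp_c(W_K)$: because $\Phi^{-s}$ maps $W_K$ into itself (near $\phi^{-1}(K)$ the inward field $-Z$ decreases $\phi$, while deeper inside the flow is gradient-like and retracts toward the critical locus of $\phi$, all of which lies in $W_K$), the path $t\mapsto\Phi^{-st}\gamma\,\Phi^{st}$ stays in $\Symp_c(W_K)$. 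Concatenating this path with the restricted null-homotopy exhibits $\gamma$ as null-homotopic in $\Symp_c(W_K)$, so $j_*$ is injective. The same argument with $n=0$, carrying basepoints along, handles $\pi_0$.

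The main obstacle I expect is justifying the geometric behaviour of the negative Liouville flow on which both halves rely: that it is defined for all negative times on $\phi^{-1}[K,\infty)$, that it eventually pushes every compact subset into $W_K$ (this is precisely the content of Condition~\ref{wellgraded}), and, for the injectivity step, that it preserves $W_K$. On the cylindrical end produced by the flow these claims are transparent, since there the flow is translation in the radial coordinate and the scaling $e^{s}$ is manifest; the work is in controlling the interaction with the interior of $W_K$, where $\phi$ has critical points and the flow is only gradient-like. A secondary point is the continuity of all the conjugation maps and homotopies in the direct-limit topology, which follows once one checks that the relevant supports remain inside a single fixed compact set as the flow-time ranges over a compact interval.
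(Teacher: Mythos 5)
Your proposal is correct and follows essentially the same route as the paper: the paper's key maps $L_{R-r}(\psi)=\Phi^{r-R}\circ\psi\circ\Phi^{R-r}$ are exactly your conjugation by negative-time Liouville flow, used in the same way to push the image of any sphere or homotopy of compactly supported symplectomorphisms down into $W_K$ and thereby establish bijectivity of $j_*$ on all $\pi_n$. Your write-up is if anything more explicit than the paper's about why the conjugates are symplectomorphisms and about the support and continuity issues.
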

\begin{proof}
Let $\Phi^t$ denote the time $t$ Liouville flow of $Z$ where that is defined.
Let $L_{R-r}:\Symp_{W_R}(W,d\theta)\rightarrow\Symp_{W_r}(W,d\theta)$ be the
map:
\[L_{R-r}(\psi)=\Phi^{r-R}\circ\psi\circ\Phi^{R-r}\]
The inclusions $\iota_{r,R}$ induce inclusions
\[\iota^S_{r,R}:\Symp_{W_r}(W,d\theta)\hookrightarrow\Symp_{W_R}(W,d\theta)\]
for $r,R\geq K$. The maps $L_{R-r}$ are homotopy inverses for $\iota^S_{r,R}$.
Similarly, the inclusion
\[\iota^S_c:\Symp_c(W_K,d\theta|_{W_K})\rightarrow\Symp_{W_K}(W,d\theta)\]
has $L_{\epsilon}$ (for any small $\epsilon$) as a homotopy inverse.

We wish to prove that the inclusion
\[\iota:\Symp_c(W_K,d\theta|_{W_K})\rightarrow\Symp_c(W,d\theta)\]
is a weak homotopy equivalence. To see that the maps
\[\iota_{\star}:\pi_n(\Symp_c(W_K,d\theta|_{W_K}))\rightarrow\pi_n(\Symp_c(W,d\theta))\]
are surjective, it suffices to note that the image of any
$f:S^n\rightarrow\Symp_c(W,d\theta)$ lands in $\Symp_{W_R}(W,d\theta)$ for some
$R$ and that $\iota_{0,R}\circ\iota_c$ is a homotopy equivalence. To see
injectivity, any homotopy $h$ between maps
$f_1,f_2:S^n\rightarrow\Symp_c(W_K,d\theta|_{W_K})$ through maps
$S^n\rightarrow\Symp_c(W,d\theta)$ lands in some $\Symp_{W_R}(W,d\theta)$ and
$\iota_{0,R}\circ\iota_c$ is a homotopy equivalence so $h$ is homotopic to a
homotopy $f_1\simeq f_2$ in $\Symp_c(W_K,d\theta|_{W_K})$.
\end{proof}

\begin{prp}\label{twostein}
If $(W,J)$ is a complex manifold with two finite-type Stein structures $\phi_1$
and $\phi_2$ then $\Symp_c(W,-dd^c\phi_1)$ and $\Symp_c(W,-dd^c\phi_2)$ are
weakly homotopy equivalent.
\end{prp}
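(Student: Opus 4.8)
The plan is to reduce the statement to the two facts already established earlier: first, that \emph{completing} a finite-type Stein structure leaves the weak homotopy type of its compactly supported symplectomorphism group unchanged, and second, that any two \emph{complete} finite-type Stein structures on the same $(W,J)$ are genuinely symplectomorphic (Lemma \ref{otherplush}). Since Lemma \ref{otherplush} is in hand, the only real work is to prove the completion statement for each of $\phi_1$ and $\phi_2$ separately and then concatenate the resulting equivalences. Throughout, I write $\simeq$ for weak homotopy equivalence and $\cong$ for isomorphism of topological groups (which of course implies $\simeq$).

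Fix $i\in\{1,2\}$ and choose $K$ larger than the finite-type threshold of $\phi_i$, so that $W_K=\phi_i^{-1}[0,K)$ contains all critical points (finite-type forces the critical locus into a compact sublevel set). With $\theta_i=-d^c\phi_i$ and $\omega_i=d\theta_i=-dd^c\phi_i$, the triple $(W,\theta_i,\phi_i)$ is a finite-type Liouville manifold satisfying Condition \ref{wellgraded} (Stein manifolds are the basic example, the Liouville flow being the K\"ahler gradient flow). Proposition \ref{WGimpliesgood} then gives
\[\Symp_c(W_K,\omega_i|_{W_K})\simeq\Symp_c(W,\omega_i).\]
Next I invoke completion: by the lemma of \cite{BC01,SS05}, $\phi_{i,h}:=h\circ\phi_i$ is a complete finite-type Stein structure with the same critical points as $\phi_i$, and by Lemma \ref{affinecomplete} the symplectic completion $(\hat{W}_K,d\hat\theta)$ of the Liouville domain $W_K$ is symplectomorphic to $(W,-dd^c\phi_{i,h})$. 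This symplectomorphism induces $\Symp_c(\hat{W}_K,d\hat\theta)\cong\Symp_c(W,-dd^c\phi_{i,h})$. On the other hand $(\hat{W}_K,d\hat\theta)$ is itself a Liouville manifold satisfying Condition \ref{wellgraded}, its sublevel set at level $K$ recovers $W_K$, and $K$ still captures all critical points, so a second application of Proposition \ref{WGimpliesgood} yields $\Symp_c(W_K,\omega_i|_{W_K})\simeq\Symp_c(\hat{W}_K,d\hat\theta)$. Concatenating the three equivalences gives the completion statement
\[\Symp_c(W,-dd^c\phi_i)\simeq\Symp_c(W,-dd^c\phi_{i,h}).\]

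Finally, $\phi_{1,h}$ and $\phi_{2,h}$ are both complete finite-type Stein structures on $(W,J)$ with finitely many critical points, so Lemma \ref{otherplush} produces a symplectomorphism $(W,-dd^c\phi_{1,h})\cong(W,-dd^c\phi_{2,h})$, hence an isomorphism of the corresponding compactly supported symplectomorphism groups. Chaining
\[\Symp_c(W,-dd^c\phi_1)\simeq\Symp_c(W,-dd^c\phi_{1,h})\cong\Symp_c(W,-dd^c\phi_{2,h})\simeq\Symp_c(W,-dd^c\phi_2)\]
completes the argument.

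The one hypothesis deserving care is the ``finitely many critical points'' requirement of Lemma \ref{otherplush}: finite-type only confines the critical locus to a compact sublevel set, so I would either assume, as is standard, that the $\phi_i$ are Morse (whence compactness of the critical locus gives finiteness), or observe that a preliminary $C^2$-small plurisubharmonic perturbation making them Morse changes neither the symplectic form up to the equivalences above nor the conclusion. This is the only genuine obstacle; everything else is bookkeeping with the exhaustion family $W_r$ and the Liouville flow, already packaged in Propositions \ref{WGimpliesgood} and Lemma \ref{affinecomplete}.
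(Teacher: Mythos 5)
Your argument is correct and is essentially identical to the paper's own proof: two applications of Proposition \ref{WGimpliesgood} to pass to a sublevel domain and then to its completion, Lemma \ref{affinecomplete} to identify the completion with $(W,-dd^c(\phi_i)_h)$, and Lemma \ref{otherplush} to compare the two complete structures. Your closing remark about the ``finitely many critical points'' hypothesis is a reasonable extra precaution that the paper itself glosses over, but it does not change the route.
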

\begin{proof}
Let $\omega_i=-dd^c\phi_i$. By Proposition \ref{WGimpliesgood},
$\Symp_c(W,\omega_i)$ is weakly homotopy equivalent to
$\Symp_c(W_{K_i},\omega_i|_{W_{K_i}})$ for some $K_i$. Again by Proposition
\ref{WGimpliesgood}, this is weakly homotopy equivalent to
$\Symp_c(\hat{W}_{K_i},\hat{\omega}_i)$. Now by Lemma \ref{affinecomplete},
this is isomorphic to $\Symp_c(W,-dd^c\left(\phi_i\right)_h)$. But by Lemma
\ref{otherplush}, $(W,-dd^c\left(\phi_1\right)_h)$ is symplectomorphic to
$(W,-dd^c\left(\phi_2\right)_h)$ (since $\left(\phi_i\right)_h$ is complete).
This proves the proposition.
\end{proof}

\section{$\Symp_c(T^*\RP{2})$}\label{rp2}
\setcounter{unthm}{4}
\begin{thmun}
Let $\omega$ be the canonical symplectic form on the cotangent bundle of
$\RP{2}$. The group of compactly supported symplectomorphisms
$\Symp_c(T^*\RP{2})$ is weakly homotopy equivalent to $\ZZ$.
\end{thmun}

\begin{proof}
The proof is an adaptation of Seidel's proof \cite{Sei98} for $T^*S^2$. The 2-1
cover $S^2\rightarrow\RP{2}$ differentiates to a 2-1 cover of tangent bundles.
Identifying tangent and cotangent bundles via the round metric, we get a 2-1
cover $T^*S^2\rightarrow T^*\RP{2}$. Restrict this to a 2-1 cover of the unit
disc subbundles $U^*(S^2)\subset T^*S^2$ and $U^*(\RP{2})\subset T^*\RP{2}$:
\[\pi:U^*(S^2)\rightarrow U^*(\RP{2})\]
$\pi$ intertwines the cogeodesic flows on these cotangent disc bundles. Since
the cogeodesic flows are periodic, one can symplectically cut along them and
$\pi$ extends to a branched cover of the compactifications:
\[S^2\times S^2\rightarrow \PP{2}\]
branched along the diagonal $\Delta\subset S^2\times S^2$ (which maps
one-to-one onto a conic $C$ in $\PP{2}$). Here $\Delta$ and $C$ are the reduced
loci of the symplectic cut.

Let $\iota:S^2\times S^2\rightarrow S^2\times S^2$ be the involution which
swaps the two $S^2$ factors. This is the deck transformation of the branched
cover. Let:
\begin{itemize}
\item $\mS_0^{\iota}$ denote the group of $\iota$-equivariant
symplectomorphisms of $S^2\times S^2$ which fix $\Delta$ pointwise and act
trivially on the homology of $S^2\times S^2$,
\item $\mS^{\iota}_1\subset\mS_0^{\iota}$ denote the subgroup of
symplectomorphisms compactly supported in the complement of $\Delta$,
\item $\mS$ denote the group of symplectomorphisms of $\PP{2}$ compactly
supported in the complement of $C$. This is homotopy equivalent to the group we
are interested in.
\end{itemize}

An element $\tilde{\phi}\in\mS^{\iota}_1$ descends to an element $\phi\in\mS$.
$\tilde{\phi}$ is the only $\iota$-equivariant symplectomorphism of $S^2\times S^2$ which acts trivially on homology and descends to $\phi$, so the correspondence
$\tilde{\phi}\rightarrow\phi$ is a homeomorphism $\mS^{\iota}_1\rightarrow\mS$. We have therefore reduced the
proposition to computing the weak homotopy type of $\mS^{\iota}_1$.

Let $\mG$ be the group of gauge transformations of the symplectic normal bundle
to $\Delta\subset S^2\times S^2$. By a standard argument \cite{Sei08} this is
homotopy equivalent to $S^1$. Let $\mS^{\iota}_0\rightarrow\mG$ be the map
taking an $\iota$-equivariant symplectomorphism fixing $\Delta$ to its
derivative along the normal bundle of $\Delta$. This is a fibration whose fibre
is weakly homotopy equivalent to $\mS^{\iota}_1$. The proposition will follow
from the long exact sequence of this fibration if we can show that
$\mS^{\iota}_0$ is contractible. This is where Gromov's theorem comes in.

Let $J_0$ denote the product complex structure on $S^2\times S^2$. Let
$\mJ^{\iota}_0$ denote the space of $\omega$-compatible almost complex
structures $J$ on $S^2\times S^2$ such that:
\begin{itemize}
\item $J\circ\iota_*=\iota_*\circ J$,
\item $J$ restricted to $T\Delta$ is equal to $J_0$.
\end{itemize}
\begin{lma}\label{jcontrlma}
$\mJ^{\iota}_0$ is contractible.
\end{lma}
\begin{proof}
Recall that if $g$ is a metric on a manifold $X$ and $\omega$ a symplectic form
then the endomorphism $A\in\Endo(TX)$ defined by
\[\omega(\cdot,A\cdot)=g(\cdot,\cdot)\]
satisfies $A^{\dagger}=-A$, so $A^{\dagger}A$ is symmetric and positive
definite. Let $PA^{\dagger}P^{-1}=\mbox{diag}(\lambda_1,\ldots,\lambda_n)$ be
the diagonalisation of $A^{\dagger}A$. Define the square root
$\sqrt{A^{\dagger}A}$ to be the matrix
$P^{-1}\mbox{diag}(\sqrt{\lambda_1},\ldots,\sqrt{\lambda_n})P$. Then
$J_A=\sqrt{A^{\dagger}A}^{-1}A$ is an almost complex structure on $X$
compatible with $\omega$.

Let $J$ be an almost complex structure on $S^2\times S^2$ compatible with the
product form $\omega$. This defines a metric $g_0$ via
$g_0(\cdot,\cdot)=\omega(\cdot,J\cdot)$. Similarly, $\iota^*J$ is an
$\omega$-compatible almost complex structure which defines a metric $g_1$. The
space of metrics is convex and the metric $g_t=(1-t)g_0+tg_1$ defines a family
of endomorphisms $A_t$ as above interpolating between $J$ and $\iota^*J$. Now
$J_{A_t}$ is a family of almost complex structures interpolating between $J$
and $\iota^*J$ and $J_{A_{1/2}}$ is $\iota_*$-invariant. Hence the contractible
space of all $\omega$-compatible almost complex structures (equal to $J_0$
along $\Delta$) deformation retracts onto the space of $\iota_*$-equivariant
ones.
\end{proof}

We now prove that $\mS^{\iota}_0$ is contractible. There is a map $A:\mS^{\iota}_0\rightarrow\mJ^{\iota}_0$ which sends $\phi$ to
$\phi_*J_0$. The following argument constructs a left inverse $B$ for $A$.

Gromov's theorem gives $J$-holomorphic foliations $\mF_1^J$ and $\mF_2^J$ for
each $J\in\mJ^{\iota}_0$. In fact the two foliations must be conjugate under
$\iota$ since $\iota\mF_1^J$ is certainly a foliation by $J$-holomorphic
spheres in the homology class $\mF_2^J$ and postivity of intersections implies
it is the unique one.  We also know that, for any $J\in\mJ^{\iota}_0$, $\Delta$
is $J$-holomorphic so each leaf of $\mF_i^J$ intersects $\Delta$ in a single
point. Define a (manifestly $\iota$-equivariant) self-diffeomorphism $\psi_J$
of $S^2\times S^2$ by sending a point $p$ to the pair
$(\Lambda_1(p)\cap\Delta,\Lambda_2(p)\cap\Delta)$ where $\Lambda_i(p)$ denotes
the unique leaf of $\mF_i^J$ passing through $p$. $\psi_J$ conjugates the pair
$(\mF_1^J,\mF_2^J)$ with the standard pair $(\mF_1^{J_0},\mF_2^{J_0})$. Since
these standard foliations are $\omega$-orthogonal, it is not hard to see that
$\psi_J^*\omega$ is $J$-tame. Define $\omega_t=t\psi_J^*\omega+(1-t)\omega$, so
that
\[\dot{\omega}_t=\psi_J^*\omega-\omega\]
Since $\psi_J$ acts trivially on homology this expression is exact and there
exists $\sigma$ such that $d\sigma=\psi_J^*\omega-\omega$. If $X_t$ is
$\omega_t$-dual to $\sigma$ then the time $t$ flow of $X_t$ defines a sequence
of diffeomorphisms $\psi_t$ such that $\psi_0=\id$ and
$\psi_t^*\omega_t=\omega$. Therefore $\psi_{J,t}=\psi_J\circ\psi_t$ is an
isotopy from $\psi_J$ to a symplectomorphism $\psi_{J,1}$ of $\omega$. We want
$\psi_{J,t}$ to have the following properties:
\begin{itemize}
\item $\psi_{J,t}$ is $\iota$-equivariant for all $t$,
\item $\psi_{J,t}$ fixes $\Delta$ for all $t$,
\item if $J=\phi_*J_0$ then $\psi_{J,1}=\phi$.
\end{itemize}
These can be achieved by suitable choice of $\sigma$. The first two follow if
we require $\sigma$ to be $\iota^*$-invariant and to vanish when restricted to
$TX|_{\Delta}$. These properties will be achieved momentarily (in a manner such
that $\sigma$ varies smoothly with $J$). The third property follows if we can
take $\sigma\equiv 0$ whenever $d\sigma\equiv 0$; for then if $J=\phi_*J_0$,
the foliations $\mF_i^J$ are just $\phi(\mF_i^{J_0})$ so $\psi_J=\phi$ and
$\dot{\omega}_t=0=d\sigma$. Taking $\sigma=0$ yields
$\psi_{J,1}=\psi_{J,0}=\phi$.

Fix a metric $g$ and let $\sigma'$ be the unique $g$-coexact 1-form for which
$d\sigma'=\dot{\omega}_t$ (which exists and varies smoothly with $J$ by Hodge
theory). In order to make sure $\psi_{J,t}$ fixes $\Delta$, we want $\sigma$ to
vanish on the bundle $TX|_{\Delta}$. Since $\psi_J|_{\Delta}=\id$, $d\sigma'=0$
along $\Delta$. In particular we may choose a function $f$ on $\Delta$
with $df=\sigma'|_{\Delta}$ (as $H^1(\Delta;\RR)=0$) and this choice is unique if we require $\int_{\Delta} f\omega=0$. Fix an $\epsilon$ such
that the map $\exp_g:U_{\epsilon}\rightarrow X$ is injective on the radius-$\epsilon$ subbundle $U_{\epsilon}$ of the normal bundle
$T\Delta^{\bot}$.
Define
\[\tilde{f}(\exp(q,v))=f(\exp(q,0))+\sigma'_{\exp(q,0)}(\exp_*v).\]
This satisfies $d\tilde{f}=\sigma'$ when restricted to $TX|_{\Delta}$. Fix a
cut-off function $\eta$ which equals $1$ inside the radius-$\epsilon/2$
subbundle $U_{\epsilon/2}$ and equals zero near the boundary of $U_{\epsilon}$.
The function $F=\eta\tilde{f}$ is now globally defined and still satisfies
$dF=\sigma'$ when restricted to $TX|_{\Delta}$. The 1-form
\[\sigma:=\frac{(\sigma'-dF)+\iota^*(\sigma'-dF)}{2}\]
is now $\iota^*$-invariant, vanishes along $\Delta$ and satisfies
$d\sigma=\dot{\omega}_t$.

We finally note that $\sigma$ varies smoothly with $J$ and that the unique
coexact $\sigma$ with $d\sigma=0$ is $\sigma=0$, so the third property holds (by Hodge theory).

This proves that $\mS^{\iota}_0$ is homotopy equivalent to $\mJ^{\iota}_0$ and hence contractible by Lemma \ref{jcontrlma}. The proposition now follows from the homotopy long exact sequence of the fibration $\mS^{\iota}_0\rightarrow\mG$, whose fibre is weakly equivalent to $\mS^{\iota}_1$ by the symplectic neighbourhood theorem.
\end{proof}

\begin{rmk} Since all geodesics of $\RP{2}$ are closed one can define a Dehn
twist in a Lagrangian $\RP{2}$ just as one does for $S^2$. This is actually the
generator of the symplectic mapping class group $\ZZ$, see \cite{Sei98}.
\end{rmk}

\section{Groups associated to configurations of spheres}

This section reviews the topology of some well-known (Fr\'{e}chet-) Lie groups which will crop up frequently later.

\subsection{Gauge transformations}\label{gau}

Let $C\subset X$ be an embedded symplectic 2-sphere in a symplectic 4-manifold and fix a set $\{x_1,\ldots,x_k\}$ of distinct points of $C$. The normal bundle $\nu=TC^{\omega\bot}$ is a $SL(2,\RR)$-bundle over $C$. We will be interested in the group $\mG_k$ of symplectic gauge transformations of $\nu$ which equal the identity at $x_1,\ldots,x_k$. This will arise when we consider symplectomorphisms of $X$ which fix $C$ pointwise and are required to equal the identity at the $k$ specified points on $C$.

We first observe that $SL(2,\RR)$ deformation retracts to the subgroup $U(1)$ which is homeomorphic to a circle. The map
\[\ev_{x_1}:\mG_0\rightarrow SL(2,\RR)\]
taking a gauge transformation to its value at the point $x_1$ is a fibration whose fibre is $\mG_1$. By definition, $\mG_1$ is the space of based maps $S^2\rightarrow SL(2,\RR)$ so
\[\pi_i(\mG_1)=\pi_0(\Map(S^{2+i},SL(2,\RR)))=\pi_0(\Map(S^{2+i},S^1))=H^1(S^{2+i};ZZ)=0\]
and $\mG_1$ is weakly contractible. By the long exact sequence of the fibration $\ev_{x_1}$ we see that $\ev_{x_1}$ is a weak homotopy equivalence.

Take the map $\ev_{x_1}\times\cdot\times\ev_{x_i}:\mG_1\rightarrow SL(2,\RR)^i$. This is again a fibration whose fibre is $\mG_i$ and whose total space is contractible. The homotopy long exact sequence of this fibration implies that $\pi_0(\mG_i)=\ZZ^{i-1}$ for $i\geq 1$ and $\pi_j(\mG_i)=0$ for $j>0$.

We can identify generators for $\pi_0(\mG_k)$ as follows. For each point $x_i$ the map $\ev_{x_i}:\mG_{k-1}\rightarrow SL(2,\RR)$ yields a connecting homomorphism $\ZZ=\pi_1(SL(2,\RR))\rightarrow\pi_0(\mG_k)$. The symplectic form gives an orientation of $SL(2,\RR)$ so there is a preferred element $1\in\ZZ$. Let $g_C(x_i)\in\mG_k$ be the image of this element in $\pi_0(\mG_k)$. They are not independent, but they are canonical and they generate.

To summarise:
\begin{align*}
\mG_0 &\simeq S^1 \\
\mG_1 &\simeq \star \\
\mG_k &\simeq \ZZ^{k-1},\ k>1
\end{align*}

\subsection{Surface symplectomorphisms}\label{sympC}

Once again, let $C\subset X$ be an embedded symplectic 2-sphere and $\{x_1,\ldots,x_k\}$ a set of $k$ distinct points. Let $\Symp(C,\{x_i\})$ denote the group of symplectomorphisms of $C$ fixing the points $x_i$. Since $\Symp(C)$ acts $N$-transitively on points for any $N$, we may as well write $\Symp(C,k)$. Moser's theorem tells us that $\Symp(C,k)$ is a deformation retract of the group $\mD_k$ of diffeomorphisms of $S^2$ fixing $k$ points. Using techniques of Earle and Eells \cite{EE67} coming from Teichm\"{u}ller theory, one can prove:
\begin{itemize}
\item $\mD_1\simeq S^1$,
\item $\mD_2\simeq S^1$,
\item $\mD_3\simeq \star$,
\item $\mD_5\simeq P\Br(S^2,5)/\ZZ/2$, where $P\Br(S^2,5)$ is the pure braid group for 5-strands on $S^2$ and the $\ZZ/2$ is generated by a full twist (see section \ref{delpres}).
\end{itemize}
Here the $S^1$ in $\mD_1$ can be thought of as rotating around the fixed point.
$\mD_2$ has a map to $S^1\times S^1$ measuring the angle of rotation of a
diffeomorphism about the two fixed points $p_1$ and $p_2$. The kernel of this
map is the group $\mD_2^c$ of compactly supported diffeomorphisms of $S^2\setminus\{p_1,p_2\}$, which is
weakly equivalent to $\ZZ$ generated by a Dehn twist in a simple closed curve
generating the fundamental group of $S^2\setminus\{p_1,p_2\}$. In the homotopy
long exact sequence of the fibration $\mD_2^c\rightarrow\mD_2\rightarrow
(S^1)^2$, one gets:
\[1\rightarrow\pi_1(\mD_2)\rightarrow\ZZ^2\rightarrow\ZZ\rightarrow\pi_0(\mD_2)\rightarrow
1\]
The Dehn twist is the image of $(1,0)$ and $(0,1)$ under the map
$\ZZ^2\rightarrow\ZZ$, so $\mD_2\simeq S^1$. If $p_1$ and $p_2$ are antipodal, a $2\pi$ rotation around the axis through them gives a non-trivial loop.

\subsection{Configurations of spheres}\label{confC}

We will frequently meet the situation where there is a configuration of embedded symplectic 2-spheres $C=C_1\cup\cdots\cup C_n\subset X$ in a 4-manifold. Write $I$ for the set of intersection points amongst the components. Suppose that there are no triple intersections amongst components and that all intersections are transverse. Let
\begin{itemize}
\item $\Stab(C)$ denote the group of symplectomorphisms of $X$ fixing the components of $C$ pointwise.
\item $k_i$ denote the number of intersection points in $I\cap C_i$ and $\Symp(C_i,k_i)$ denote the group of symplectomorphisms of components of $C$ fixing all the intersection points. Write $\Symp(C)$ for the product $\prod_{i=1}^n\Symp(C_i,k_i)$.
\item $\mG(C_i)$ denote the group of gauge transformations of the normal bundle to $C_i\subset X$ which equal the identity at the $k_i$ intersection points (so $\mG(C_i)\cong\mG_{k_i}$). Write $\mG$ for the product $\prod_{i=1}^n\mG(C_i)$.
\end{itemize}

Let us write $\Stab^0(C)$ for the kernel of the map
\begin{align*}
\Stab(C)&\rightarrow\Symp(C) \\
\phi&\mapsto \left(\phi|_{C_1},\ldots,\phi|_{C_n}\right)
\end{align*}
We will need to understand the homotopy long exact sequence of the fibration
\begin{equation}\label{stab0fib}\tag{*}
\begin{CD}
\Stab^0(C) @>>> \Stab(C) \\
@. @VVV \\
@. \Symp(C)
\end{CD}
\end{equation}
namely
\[\cdots\rightarrow\pi_1(\Symp(C))\rightarrow\pi_0(\Stab^0(C))\rightarrow\pi_0(\Stab(C))\rightarrow\cdots\]

There is a map $\Stab^0(C)\rightarrow\mG$ which sends a symplectomorphism $\phi$ to the induced map on the normal bundles of components of $C$. We can understand the composite
\[\pi_1(\Symp(C))\rightarrow\pi_0(\Stab^0(C))\rightarrow\pi_0(\mG)\]
by thinking purely locally in a neighbourhood of $C$.

To see this, note that $\pi_1(\Symp(C))$ is only non-trivial if $C$ contains components with $n_i=1$ or $2$ marked points. Suppose $n_i=1$. There is a Hamiltonian circle action which rotates $C_i$ around the marked point, giving a generating loop $\gamma$ in $\pi_1(\Symp(C_i,n_i))$. Pull this back to a Hamiltonian circle action on the normal bundle of $C_i$ in $X$. By the symplectic neighbourhood theorem this is a local model for $X$ near $C_i$, so by cutting off the Hamiltonian at some radius in the normal bundle one obtains a path $\phi_t$ in $\Symp(X)$ consisting of symplectomorphisms which are supported in a neighbourhood of $C_i$ (see figure). The symplectomorphism $\phi_t$ can be assumed to fix $C$ as a set (indeed it just rotates the component $C_i$) and $\phi_{2\pi}$ fixes $C$ pointwise, i.e. $\phi_{2\pi}\in\Stab^0(C)$. By definition, $\phi_{2\pi}$ represents the image of $\gamma\in\pi_1(\Symp(C))$ under the boundary map $\pi_1(\Symp(C))\rightarrow\pi_0(\Stab^0(C))$ of the long exact sequence above. A similar story holds when $n_i=2$. The following lemma is immediate from the definitions.

\begin{lma}\label{localcomp}
The image of $\phi_{2\pi}$ under the map $\pi_0(\Stab^0(C))\rightarrow\pi_0(\mG)$ is
\begin{itemize}
\item ($n_i=1$:) $g_{C_j}(x)\in\pi_0(\mG(C_j))$ where $x\in C_i\cap C_j$,
\item ($n_i=2$:) $g_{C_j}(x)g_{C_k}(y)\in\pi_0(\mG(C_j))\times\pi_0(\mG(C_k))$ where $x\in C_i\cap C_j$ and $y\in C_i\cap C_k$.
\end{itemize}
\end{lma}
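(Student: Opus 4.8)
The plan is to reduce the whole statement to an explicit linear computation in a Darboux chart around each intersection point, where the two spheres become complex coordinate axes. By the symplectic neighbourhood theorem a neighbourhood of $C_i$ in $X$ is symplectomorphic to a disc subbundle of the symplectic normal bundle $\nu_{C_i}$, and the loop $\gamma$ rotating $C_i$ about its marked point(s) is generated by a moment map $\mu$ on $C_i$ (the height function on the sphere). First I would make the cut-off construction fully explicit: lift $\mu$ to a Hamiltonian $H$ on the disc bundle whose flow is the fibrewise-linear lift of the rotation, and then multiply by a cut-off function $\chi$ of the fibre radius of $\nu_{C_i}$ to obtain a compactly supported $\tilde H$. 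Its time-$2\pi$ flow $\phi_{2\pi}$ is supported near $C_i$; on any $C_j$ meeting $C_i$ the Hamiltonian $\tilde H$ vanishes to second order along $C_j$, so $X_{\tilde H}$ vanishes on $C_j$ and $\phi_{2\pi}$ fixes $C$ pointwise, confirming $\phi_{2\pi}\in\Stab^0(C)$.

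Next I would linearise $\phi_{2\pi}$ transversally to $C_j$ to read off its image under $\pi_0(\Stab^0(C))\rightarrow\pi_0(\mG(C_j))$. Near $x=C_i\cap C_j$ take coordinates $(z_1,z_2)$ with $C_i=\{z_2=0\}$ and $C_j=\{z_1=0\}$; then $\tilde H$ has the local form $\chi(|z_2|^2)\tfrac12|z_1|^2$, and a direct integration shows that the induced gauge transformation of $\nu_{C_j}$ rotates the fibre over the point $z_2\in C_j$ by the angle $2\pi\chi(|z_2|^2)$. This is exactly a radially symmetric full twist of the normal framing that is the identity at $x$ (a $2\pi$ rotation) and dies away as one moves out along $C_j$, i.e. a single point defect of degree one localised at $x$.

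I would then match this defect with the canonical generator $g_{C_j}(x)$. The key is to unwind the definition of $g_{C_j}(x)$ from Section \ref{gau}: it is the image of the preferred generator of $\pi_1(SL(2,\RR))$ under the connecting map of the evaluation fibration $\ev_x$, and tracing that connecting map produces precisely a representative of the same radial ``$2\pi$-twist'' profile about $x$. Placing the two profiles side by side identifies the class of $\phi_{2\pi}$ in $\pi_0(\mG(C_j))$ with $g_{C_j}(x)$. The case $n_i=2$ is the identical computation carried out independently in the charts around $x\in C_i\cap C_j$ and $y\in C_i\cap C_k$, yielding the product $g_x(1)g_y(1)$ in $\pi_0(\mG(C_j))\times\pi_0(\mG(C_k))$.

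The main obstacle I anticipate is bookkeeping of orientations rather than anything conceptual: I must check that the weight of the rotation on the normal line, the symplectic orientation used to fix the preferred element $1\in\pi_1(SL(2,\RR))$, and the chosen direction of $\gamma$ are mutually compatible, so that the twist is $g_{C_j}(x)$ and not its inverse. This is most delicate when $n_i=2$, since the rotation has opposite weights at the two fixed points $x$ and $y$, and one must verify that the normal twists there nonetheless contribute with the signs recorded in the statement. Everything else is routine in the linear model, which is why the result is essentially immediate once the definition of $g_{C_j}(x)$ and the explicit form of $\phi_{2\pi}$ are compared directly.
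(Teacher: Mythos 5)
Your approach is the same as the paper's: the paper offers no proof beyond the construction of $\phi_{2\pi}$ in the preceding paragraph (it declares the lemma ``immediate from the definitions''), and your local computation in the Darboux chart --- $\tilde H=\chi(|z_2|^2)\tfrac12|z_1|^2$, inducing a rotation of the fibre of $\nu_{C_j}$ over $z_2$ by $2\pi\chi(|z_2|^2)$, hence a radial full twist localised at $x$ --- is exactly the content being asserted, worked out in more detail than the paper gives. For $n_i=1$ your argument is complete and correct.

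The one place where you stop short is precisely where the content of the second clause lies. You correctly observe that for $n_i=2$ the rotation of $C_i$ has opposite weights at its two fixed points, and you defer the check that the resulting twists ``nonetheless contribute with the signs recorded in the statement.'' That check does not come out neutral: the weight of the rotation on $T_xC_i=\nu_{C_j}|_x$ and on $T_yC_i=\nu_{C_k}|_y$, measured against the symplectic orientations that define the preferred generators $g_x(1)$ and $g_y(1)$, are opposite, so the computation yields $g_x(1)g_y(-1)$ rather than $g_x(1)g_y(1)$; since $x$ and $y$ lie on \emph{different} components $C_j$ and $C_k$, there is no relation available to absorb the sign (unlike the relation $\sum_i g(x_i)=0$ inside a single $\pi_0(\mG_k)$). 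So either the statement carries an implicit sign convention tied to the direction of the loop, or it needs a sign; happily this is invisible in every application in the paper (the only $n_i=2$ instance is $\DD_3$, where the map $\ZZ^5\to\ZZ^3$ is surjective with rank-two kernel for either sign). A second, smaller point: your claim that $\tilde H$ vanishes to second order along $C_j$ presumes the moment map is normalised to vanish at the intersection point; for $n_i=2$ one cannot normalise $\mu$ at both poles simultaneously, so the global lift must be built from the locally normalised models near $x$ and $y$ (otherwise the intermediate $\phi_t$ fail to fix $C_j$ pointwise and the path is not a valid lift for the connecting homomorphism). Neither issue invalidates your method, but both deserve to be carried through rather than flagged.
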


\subsection{Banyaga's isotopy extension theorem}

We will make constant recourse to the following ($k$-parameter version of a) theorem of Banyaga:

\begin{thm}[Banyaga's isotopy extension theorem, see \cite{MS05}, p. 98]\label{banyaga}
Let $(X,\omega)$ be a compact symplectic manifold and $C_s\subset X$ be a $k$-parameter family of compact subsets ($s\in [0,1]^k$). Let $\phi_{t,s}:U\rightarrow X$ be a $k$-parameter family of symplectic isotopies of open sets such that $U_s:=\phi_{0,s}(U)$ is a neighbourhood of $C_s$ and assume that for all $s$
\[H^2(X,C_s;\RR)=0\]
Then there are neighbourhoods $\mathcal{N}_s\subset U_s$ of $C_s$ and a $k$-parameter family of symplectic isotopies $\psi_{t,s}:X\rightarrow X$ such that $\psi_{t,s}|_{\mathcal{N}_s}=\phi_{t,s}|_{\mathcal{N}_s}$ for all $t$ and $s$.
\end{thm}

The proof uses a standard Moser-type argument. This theorem will be used to prove the homotopy lifting property on a number of occasions.

\section{$\Symp_c(\CC^*\times\CC)$}\label{cstarcsect}

In this section we prove the following theorem:
\begin{thmun}\label{cstarcthm}
Let $\omega$ be the product symplectic form on $\CC^*\times\CC$. The group of
compactly supported symplectomorphisms $\Symp_c(\CC^*\times\CC)$ is weakly
contractible.\end{thmun}

\subsection{Proof of Theorem \ref{cstarcthm}}

We define three holomorphic spheres in $X=\PP{1}\times\PP{1}$:
\begin{align*}
C_1 &= \{0\}\times\PP{1} \\
C_2 &=\{\infty\}\times\PP{1} \\
C_3 &= \PP{1}\times\{\infty\}.
\end{align*}
Consider the complements
\begin{align*}
U&=X\setminus C_1\cup C_2\cup C_3 & U'&=X\setminus C_2\cup C_3.
\end{align*}
These are biholomorphic to $\CC^*\times\CC$ and $\CC^2$ respectively. The split symplectic form $\omega$ on $X$ restricts to symplectic forms $\omega|_U$ and $\omega|_{U'}$ which are both Stein for the standard (product) complex structures. By Proposition \ref{twostein},
\begin{align*}
\Symp_c(U)&\simeq\Symp_c(\CC^*\times\CC) & \Symp_c(U')&\simeq\Symp_c(\CC^2)
\end{align*}
and by a theorem of Gromov \cite{Gro85}, $\Symp_c(\CC^2)\simeq\star$.

\[
\xy
(0,0)*{};(20,0)*{} **\dir{-};
(0,0)*{};(0,-20)*{} **\dir{-};
(20,0)*{};(20,-20)*{} **\dir{-};
(0,-20)*{};(20,-20)*{} **\dir{-};
(-3,-10)*{C_1};(23,-10)*{C_2};(10,2)*{C_3};
(10,-10)*{X};
(30,0)*{};(50,0)*{} **\dir{--};
(30,0)*{};(30,-20)*{} **\dir{--};
(50,0)*{};(50,-20)*{} **\dir{--};
(30,-20)*{};(50,-20)*{} **\dir{-};
(40,-10)*{U};
(60,0)*{};(80,0)*{} **\dir{--};
(60,0)*{};(60,-20)*{} **\dir{-};
(80,0)*{};(80,-20)*{} **\dir{--};
(60,-20)*{};(80,-20)*{} **\dir{-};
(70,-10)*{U'};
\endxy
\]

We will now define a space on which $\Symp_c(U')$ acts. Let $\mJ$ denote the space of $\omega$-compatible almost complex structures on $X$.

\begin{dfn}
A \emph{standard configuration} in $X$ is an embedded symplectic sphere $S$ satisfying the following properties:
\begin{itemize}
\item $S$ is homologous to $C_1$,
\item $S$ is disjoint from $C_2$,
\item there exists a $J\in\mJ$ making $S$, $C_2$ and $C_3$ simultaneously $J$-holomorphic,
\item there exists a neighbourhood $\nu$ of $C_3$ such that $\nu\cap S$ is equal to $\nu\cap C_1$.
\end{itemize}
Let $\mC_0$ denote the space of standard configurations, topologised as a subset of the quotient
\[\Map(\PP{1},X)/\Diff(\PP{1})\]
where $\Map(\PP{1},X)$ and $\Diff(\PP{1})$ are given the $\mC^{\infty}$-topology.
\end{dfn}

The first important fact about this space is the following proposition whose proof is postponed to section \ref{prfcstarccontr}:

\begin{prp}\label{cstarccontr}
$\mC_0$ is weakly contractible.
\end{prp}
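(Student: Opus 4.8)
The plan is to use Gromov's foliation theorem (Theorem \ref{folnthm}) to dominate $\mC_0$ by a contractible space of almost complex structures, realising $\mC_0$ as the base of a fibration whose total space and fibres are all weakly contractible. First I would fix once and for all a neighbourhood $\nu_0$ of $C_3$ and let $\mJ_0$ denote the space of $\omega$-compatible almost complex structures $J$ on $X$ such that $C_2$ and $C_3$ are $J$-holomorphic and $J$ agrees with the product structure $J_0$ on $\nu_0$. I would check that $\mJ_0$ is contractible by the usual argument: it is the space of sections of a bundle over $X$ whose fibre, the compatible complex structures on a symplectic vector space compatible with the prescribed tangent lines along $C_2$ and $C_3$, is contractible, and it is cut out by the closed contractible condition $J=J_0$ on $\nu_0$.

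For each $J\in\mJ_0$, Theorem \ref{folnthm} produces a $J$-holomorphic foliation $\mF^J$ of $X$ by spheres in the homology class of $C_1$. Because $J=J_0$ on $\nu_0$, the foliation $\mF^J$ is the standard vertical foliation near $C_3$, so the leaf $\Lambda^J$ through the point $(0,\infty)=C_1\cap C_3$ coincides with $C_1$ on $\nu_0$. I would then verify that $\Lambda^J$ is a standard configuration: it is homologous to $C_1$; it is disjoint from $C_2$ (which is itself a leaf of $\mF^J$, being a $J$-holomorphic sphere in the class of $C_1$, and distinct from $\Lambda^J$ since $(0,\infty)\notin C_2$); it is $J$-holomorphic for this very $J$; and it agrees with $C_1$ near $C_3$. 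Smooth dependence of Gromov's foliations on $J$ (Gromov compactness, with no bubbling as the class is primitive and regularity automatic in real dimension four) makes $r\colon\mJ_0\to\mC_0$, $J\mapsto\Lambda^J$, continuous.

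Next I would identify the fibres of $r$. Any $J$-holomorphic sphere in the class of $C_1$ is automatically a leaf of $\mF^J$, since positivity of intersections together with $C_1\cdot C_1=0$ forces it to coincide with the leaf through any of its points; and a standard configuration $S$ meets $C_3$ near $(0,\infty)$. Hence $\Lambda^J=S$ exactly when $S$ is $J$-holomorphic, so $r^{-1}(S)=\{J\in\mJ_0 : S \text{ is } J\text{-holomorphic}\}$, the space of compatible $J$ making $S$, $C_2$ and $C_3$ simultaneously holomorphic and equal to $J_0$ near $C_3$. This space is non-empty: given a witnessing $J'$ supplied by the definition of a standard configuration, one interpolates $J'$ to $J_0$ on $\nu_0$, where $S=C_1$, $C_2$ and $C_3$ are already standard. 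It is contractible by the usual argument for spaces of configuration-compatible almost complex structures (as in the proof of Lemma \ref{jcontrlma}). In particular $r$ is surjective, and granting that $r$ is a Serre fibration, its long exact sequence together with the contractibility of $\mJ_0$ and of the fibres gives $\pi_i(\mC_0)=0$ for all $i$, which is the assertion.

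The hard part will be precisely the claim that $r$ is a fibration, equivalently the local triviality of the family $\{r^{-1}(S)\}$ of contractible $J$-spaces as $S$ varies; this is the parametrised statement on which arguments of this type always hinge, and I expect it to be isolated as the crucial lemma proved in the appendix. A related technical nuisance is the normalisation near $C_3$: distinct configurations come with distinct neighbourhoods on which they match $C_1$, so some care is needed to make the single fixed $\nu_0$ (and the interpolation producing points of each fibre) work uniformly. Since only the weak homotopy type is at stake, it should suffice to treat one compact family $f\colon S^n\to\mC_0$ at a time, choosing $\nu_0$ small enough for that family, so I would phrase the whole argument relative to such a compact family.
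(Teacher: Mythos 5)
Your overall strategy --- dominating $\mC_0$ by a contractible space of almost complex structures via Gromov's curves, with contractible fibres given by the $J$'s making a fixed configuration holomorphic --- is the same circle of ideas as the paper's, and your identification of the fibres and their contractibility (the appendix lemma, lemma \ref{contrH}) is on target. But there are two genuine gaps at exactly the points where the real work happens. First, the normalisation near $C_3$ does not do what you claim: requiring $J=J_0$ on a neighbourhood $\nu_0$ of $C_3$ only forces the leaf through $([0:1],[1:0])$ to be a $J_0$-holomorphic disc transverse to $C_3$ there --- for instance a graph $w\mapsto(f(w),w)$ with $f$ holomorphic and $f(\infty)=0$ is $J_0$-holomorphic and transverse to $C_3$ without being the vertical line $C_1$. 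So your map $r$ lands only in the larger space $\mC$ of configurations meeting $C_3$ transversely at that point, not in $\mC_0$. This is precisely why the paper introduces the space $\mC$ of nonstandard configurations and proves (lemma \ref{gompfy}, via a Gompf isotopy) that the inclusion $\mC_0\hookrightarrow\mC$ is a weak homotopy equivalence; that step cannot be eliminated by constraining $J$ near $C_3$, and your proof is missing it.

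Second, the Serre fibration property of $r$, which you correctly flag as the hard part, is not what the appendix proves (the appendix only gives contractibility of $\mH(S)$ for a single fixed configuration $S$), and the paper deliberately never proves that its map $G:\mH(C')\to\mC$ is a fibration. Instead, given a based $f:(S^n,\star)\to(\mC_0,C)$, it constructs by hand a lift $\tilde f:S^n\to\mH(C')$ with $G\circ\tilde f\simeq f$ in $\mC$: it parametrises $S^n$ by great half-circles from $\star$, extends the resulting isotopies of configurations to ambient symplectomorphisms using Banyaga's symplectic isotopy extension theorem (which needs $H^2(X,S;\RR)=0$), pushes forward a fixed $J_0$ to get a map of the oriented real blow-up of $S^n$ at $\infty$ into $\mH(C')$, and caps off the boundary sphere using contractibility of $\mH(f(\infty))$. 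Contractibility of $\mH(C')$ then kills $\tilde f$ and hence $f$. That explicit construction is the content you would have to supply in place of ``granting that $r$ is a fibration''; as written, your argument defers its central step to a lemma that is not available.
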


Given this proposition, we observe that

\begin{lma}
$\Symp_c(U')$ acts transitively on $\mC_0$.
\end{lma}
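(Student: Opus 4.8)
The plan is to show that $\Symp_c(U')$, the group of symplectomorphisms of $X$ compactly supported in the complement of $C_2\cup C_3$, acts transitively on the space $\mC_0$ of standard configurations. The strategy is a standard Moser-type interpolation argument combined with the existence of a suitable pseudoholomorphic foliation. First I would fix a reference standard configuration, namely $C_1$ itself, and given an arbitrary $S\in\mC_0$ seek a symplectomorphism $\psi\in\Symp_c(U')$ with $\psi(C_1)=S$. Since both $C_1$ and $S$ are disjoint from $C_2$ and agree with $C_1$ near $C_3$, any such $\psi$ will automatically be supported away from $C_2\cup C_3$, which is exactly the requirement for membership in $\Symp_c(U')$.

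The key geometric input is the third defining property of a standard configuration: there exists $J\in\mJ$ for which $S$, $C_2$ and $C_3$ are simultaneously $J$-holomorphic. Applying Gromov's foliation theorem (theorem \ref{folnthm}) to this $J$ on $X=\PP{1}\times\PP{1}$, I would obtain a $J$-holomorphic foliation $\mF$ by spheres in the class $[C_1]=[S]$; positivity of intersections forces $S$ to be a leaf of $\mF$, and $C_2$, being $J$-holomorphic in the same homology class and disjoint from $S$, is another leaf. The fibre class $[C_3]$ gives a transverse $J$-holomorphic foliation whose leaves meet each leaf of $\mF$ once. First I would build a diffeomorphism $\Psi$ of $X$ carrying the standard product foliations (with leaves $C_1$ and $S$ singled out) to the pair $(\mF, \text{transverse foliation})$, arranged to be the identity near $C_3$ and to fix $C_2$, exactly as in the construction of $\psi_J$ in the proof of $\Symp_c(T^*\RP{2})\simeq\ZZ$ above. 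This $\Psi$ satisfies $\Psi(C_1)=S$ but is only a diffeomorphism.

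Next I would upgrade $\Psi$ to a symplectomorphism by the Moser trick used verbatim in the $T^*\RP{2}$ proof: because the two product foliations are $\omega$-orthogonal, $\Psi^*\omega$ is $J$-tame, so the linear path $\omega_t=(1-t)\omega+t\,\Psi^*\omega$ consists of symplectic forms cohomologous to $\omega$ (as $\Psi$ acts trivially on $H^2$). I would choose a primitive $\sigma$ of $\Psi^*\omega-\omega$ that vanishes near $C_3$ and along $C_2$, integrate the $\omega_t$-dual vector field, and compose to obtain an isotopy from $\Psi$ to a genuine symplectomorphism $\psi$ with $\psi(C_1)=S$ and support disjoint from $C_2\cup C_3$.

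The main obstacle will be the last step of controlling the support of $\sigma$ (hence of the Moser flow) so that the resulting symplectomorphism genuinely lies in $\Symp_c(U')$ rather than merely $\Symp(X)$. The fourth defining property of standard configurations — that $S$ agrees with $C_1$ inside a neighbourhood $\nu$ of $C_3$ — is precisely what makes this feasible: it lets me take $\Psi=\id$ on $\nu$, so that $\Psi^*\omega-\omega\equiv 0$ there and $\sigma$ can be chosen to vanish near $C_3$. Ensuring simultaneously that $\sigma$ vanishes on $TX$ along $C_2$ (so $C_2$ stays fixed and the flow is compactly supported away from it) requires the same local cut-off construction as in the $T^*\RP{2}$ argument, averaging to enforce the vanishing conditions; verifying that these conditions are mutually compatible and that the primitive can indeed be cut off without destroying $d\sigma=\Psi^*\omega-\omega$ is the only genuinely delicate point, and everything else is routine.
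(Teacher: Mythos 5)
Your route is genuinely different from the paper's. The paper first invokes the weak contractibility of $\mC_0$ (proposition \ref{cstarccontr}, whose proof is postponed) to get a path $S_t$ of standard configurations joining $S_0$ to $S_1$, extends the isotopy $T_t=S_t\cup C_2\cup C_3$ to an isotopy of neighbourhoods via the symplectic neighbourhood theorem (constant near $C_2\cup C_3$, which is possible because every $S_t$ is disjoint from $C_2$ and standard near $C_3$), and then applies Banyaga's symplectic isotopy extension theorem using $H^2(X,T_t;\RR)=0$; the resulting global symplectic isotopy is the identity near $C_2\cup C_3$, so its time-one map lies in $\Symp_c(U')$. Your proposal builds the symplectomorphism directly from Gromov's foliations plus a Moser argument, which would have the virtue of not leaning on the postponed contractibility statement; but as written it has a genuine gap in the support control, and it is larger than the ``delicate point'' you flag at the end.

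Concretely: your opening assertion that any $\psi$ with $\psi(C_1)=S$ ``will automatically be supported away from $C_2\cup C_3$'' is false --- membership in $\Symp_c(U')$ means being the identity on an entire neighbourhood of $C_2\cup C_3$, and this must be engineered, not deduced from where $\psi$ sends $C_1$. More seriously, the two conditions you impose on $\Psi$ pull against each other. The taming claim ($\Psi^*\omega$ tames $J$, so the linear path $\omega_t$ stays nondegenerate) is inherited from the $T^*\RP{2}$ argument only because $\psi_J$ there conjugates the \emph{entire} foliation pair to the standard $\omega$-orthogonal pair. If you force $\Psi=\id$ on a neighbourhood of $C_3$, then $\Psi$ no longer conjugates the foliations on the transition region --- only the single leaf $S$ agrees with $C_1$ near $C_3$; the other leaves of $\mF$ do not agree with the product foliation there --- and the nondegeneracy of $\omega_t$ fails exactly where you modified $\Psi$. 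If instead you let $\Psi$ conjugate the foliations globally, the Moser output preserves $C_2$ and $C_3$ only setwise (and note that $C_1$, $S$ and $C_2$ all meet $C_3$, so corrections near $C_3$ move the curves you care about); pushing the support off a neighbourhood of $C_2\cup C_3$ is then a further nontrivial step, essentially the gauge-theoretic analysis of section \ref{gau} and lemma \ref{stabwkcontr}, which you have not supplied. Likewise, arranging that $\sigma$ vanishes along $C_2$ makes the Moser flow fix $C_2$ pointwise, but does not make the composite the identity on a neighbourhood of $C_2$. Either branch of your construction needs an additional argument; the paper's Banyaga route sidesteps both at the cost of quoting the contractibility of $\mC_0$.
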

\begin{proof}
Since $\mC_0$ is weakly contractible it is path-connected. If $S_0$ and $S_1$ are two standard configurations let $S_t$ be a path connecting them. Then
\[T_t=S_t\cup C_2\cup C_3\]
is an isotopy of configurations of symplectic spheres. Since $S_t$ is a standard configuration for each $t$ the isotopy $T_t$ extends to an isotopy of a neighbourhood of $T_t$ by the symplectic neighbourhood theorem. We may assume this isotopy fixes a neighbourhood of $C_2\cup C_3$ pointwise. Since $H^2(X,T_t;\RR)=0$, Banyaga's Theorem \ref{banyaga} above implies that this isotopy extends to a path $\psi_t$ in $\Symp(X)$. By construction $\psi_t$ is the identity on a neighbourhood of $C_2\cup C_3$. Hence $\psi_1\in\Symp_c(U')$ sends $S_0$ to $S_1$, proving transitivity.
\end{proof}

Morevover, the map $\Symp_c(U')\rightarrow\mC_0=\Orb(C_1)$ is a fibration by Banyaga's theorem. We identify the stabiliser:

\begin{lma}\label{stabwkcontr}
The stabiliser $\Stab(C_1)$ of $C_1$ under this action is weakly homotopy equivalent to $\Symp_c(U)$.
\end{lma}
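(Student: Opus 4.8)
The plan is to interpose two fibrations between $\Stab(C_1)$ and $\Symp_c(U)$, each with a weakly contractible base, so that $\Stab(C_1)$, the intermediate group of elements fixing $C_1$ pointwise, and $\Symp_c(U)$ are all weakly homotopy equivalent. The essential geometric facts are that $C_1=\{0\}\times\PP{1}$ is disjoint from $C_2$ and meets $C_3$ in the single point $x=(0,\infty)$, and that the two auxiliary spaces appearing below (a surface symplectomorphism group and a gauge group with one marked point) are both contractible.

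First I would set up the restriction fibration. An element $\phi\in\Stab(C_1)$ lies in $\Symp_c(U')$, so it is the identity near $C_2\cup C_3$, and satisfies $\phi(C_1)=C_1$ setwise. Since $\phi$ is the identity near $C_3$ it fixes a neighbourhood of $x$ in $C_1$, so the restriction $\phi|_{C_1}$ is a symplectomorphism of $C_1\cong S^2$ supported away from $x$. This defines a map
\[
r:\Stab(C_1)\longrightarrow\Symp_c(C_1\setminus\{x\}),
\]
which I would argue is a fibration via the symplectic isotopy extension theorem: surjectivity follows by extending a compactly supported isotopy of $C_1$ to a symplectic tubular neighbourhood and cutting off, which keeps the support disjoint from $C_2\cup C_3$ because $\phi|_{C_1}$ is supported away from $x$ and $C_1\cap C_2=\varnothing$. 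The target is $\Symp_c$ of the plane, which is weakly contractible, so $r$ exhibits $\Stab(C_1)$ as weakly equivalent to its fibre, the group of symplectomorphisms fixing $C_1$ pointwise.

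Next I would take the normal-derivative fibration. The map from the group fixing $C_1$ pointwise to $\mG_1$, recording the induced gauge transformation of the symplectic normal bundle of $C_1$ with its single marked point $x$, is again a fibration; by the computation of section \ref{gau}, $\mG_1\simeq\star$, so the total space is weakly equivalent to the fibre of symplectomorphisms fixing $C_1$ to first order. Finally, applying the symplectic neighbourhood theorem to linearise along $C_1$ and then using the Liouville scaling in the normal directions to push support outward, I would show that this fibre deformation retracts onto the subgroup of symplectomorphisms that are the identity on a whole neighbourhood of $C_1$; combined with being the identity near $C_2\cup C_3$, this subgroup is exactly $\Symp_c(U)$. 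Chaining the three weak equivalences gives the lemma.

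The main obstacle is this last identification, namely passing from ``fixes $C_1$ to first order'' to ``is the identity on a neighbourhood of $C_1$.'' Verifying that $r$ and the normal-derivative map are genuine fibrations is routine given the symplectic isotopy extension theorem, and the weak contractibility of the two bases is classical (for $\Symp_c$ of the plane) and quoted from section \ref{gau} (for $\mG_1$); but the neighbourhood argument must be carried out so that the deformation retraction depends smoothly on the symplectomorphism and pushes the support off $C_1$ without ever disturbing the behaviour near $C_2\cup C_3$.
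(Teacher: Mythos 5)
Your proposal is correct and follows essentially the same route as the paper: a restriction fibration onto the contractible group of compactly supported symplectomorphisms of $C_1\setminus\{\infty\}$, followed by the normal-bundle gauge fibration onto a weakly contractible gauge group (the paper uses gauge transformations equal to the identity near $\infty$ rather than just at $\infty$, an immaterial variant), and finally the symplectic neighbourhood theorem to identify the residual group with $\Symp_c(U)$. The last step, which you rightly flag as the delicate one, is handled in the paper by exactly the appeal to the symplectic neighbourhood theorem you describe.
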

\begin{proof}
Since a symplectomorphism $\phi\in\Symp_c(U')$ fixes a neighbourhood of $C_3$, an element $\phi\in\Stab(C_1)\subset\Symp_c(U')$ will induce a symplectomorphism $\bar{\phi}$ of $C_1$ which is compactly supported away from the point $\infty=([0:1],[1:0])\in C_1\cap C_3$. The map $\phi\mapsto\bar{\phi}$ gives a fibration
\[
\begin{CD}
\Stab^0(C_1) @>>> \Stab(C_1) \\
@. @VVV \\
@. \Symp_c(C_1,\infty)
\end{CD}
\]
where $\Stab^0(C_1)$ is the group of symplectomorphisms $\phi\in\Symp_c(U')$
which fix $C_1$ pointwise and $\Symp_c(C_1,\infty)$ denotes the group of
compactly supported symplectomorphisms of $C_1\setminus\{\infty\}$. This latter group is contractible, so it suffices to prove that $\Stab^0(C_1)$ is weakly homotopy equivalent to $\Symp_c(U)$.

Define $\mG$ to be the group of symplectic gauge transformations of the normal bundle to $C_1$ in $X$ which equal the identity on some neighbourhood of $\infty\in C_1$. It follows from section \ref{gau} that this group is weakly contractible. The map $\Stab^0(C_1)\rightarrow\mG$, which takes a compactly supported symplectomorphism of $U'$ fixing $C_1$ to its derivative on the normal bundle to $C_1$, defines a fibration. Weak contractibility of $\mG$ implies that the kernel, $\Stab^1(C_1)$, of this fibration is weakly homotopy equivalent to $\Stab^0(C_1)$.

This kernel is the group of compactly supported symplectomorphisms of $U'$
which fix $C_1$ and act trivially on its normal bundle. By the symplectic
neighbourhood theorem, this is weakly homotopy equivalent to the group of
compactly supported symplectomorphisms of $U=U'\setminus C_1$.
\end{proof}

Putting all this together, weak contractibility of $\Symp_c(U)$ follows from the long exact sequence of the fibration
\begin{equation}\label{orbstabCtimesCstar}\tag{**}
\begin{CD}
\Symp_c(U)\simeq\Stab(C_1) @>>> \Symp_c(U')\simeq\star \\
@. @VVV \\
@. \mC_0\simeq\star
\end{CD}
\end{equation}

We noted earlier that $\Symp_c(U)\simeq\Symp_c(\CC^*\times\CC)$, so this proves Theorem \ref{cstarcthm}.

\subsection{Proof of Proposition \ref{cstarccontr}}\label{prfcstarccontr}

To prove weak contractibility of the space of standard configurations, we will need some preliminary work.

\subsubsection{Gompf isotopy}

We first introduce another space of configurations of spheres.

\begin{dfn}
A \emph{nonstandard configuration} in $X$ is an embedded symplectic sphere $S$ satisfying the following properties:
\begin{itemize}
\item $S$ is homologous to $C_1$,
\item $S$ is disjoint from $C_2$,
\item $S$ intersects $C_3$ transversely once at $([0:1],[1:0])$,
\item there exists a $J\in\mJ$ making $S$, $C_2$ and $C_3$ simultaneously $J$-holomorphic,
\end{itemize}
Let $\mC$ denote the space of nonstandard configurations, topologised as a quotient
\[\Map(\PP{1},X)/\Diff(\PP{1})\]
where $\Map(\PP{1},X)$ and $\Diff(\PP{1})$ are given the $\mC^{\infty}$-topology.
\end{dfn}

These are just like standard configurations but without the requirement that they intersect $C_3$ in a standard way. An additional subtlety arises from requiring there exists a $J$ making $S$ and $C_3$ simultaneously holomorphic: it is not even true that two transverse symplectic linear subspaces of a symplectic vector space can be made simultaneously $J$-holomorphic. This is discussed further below.

 There is an inclusion $\iota:\mC_0\hookrightarrow\mC$.
\begin{lma}\label{gompfy}
$\iota$ is a weak homotopy equivalence.
\end{lma}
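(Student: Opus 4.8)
The plan is to show that $\iota$ induces isomorphisms on all homotopy groups by proving that the relative homotopy groups $\pi_k(\mC,\mC_0)$ vanish; equivalently, that every map of pairs $(D^k,\partial D^k)\to(\mC,\mC_0)$ is homotopic rel $\partial D^k$ to a map landing in $\mC_0$. The key observation is that the only difference between membership in $\mC$ and in $\mC_0$ is a purely \emph{local} condition near the marked intersection point $p=([0:1],[1:0])$. Since $[S]=[C_1]$ we have $S\cdot C_3=C_1\cdot C_3=1$ and $S\cdot C_2=0$, so a configuration $S\in\mC$ meets $C_3$ only at $p$ and is disjoint from $C_2$. Hence for a sufficiently small neighbourhood $\nu$ of $C_3$ the intersection $\nu\cap S$ is a single small disc through $p$, and the standardness requirement $\nu\cap S=\nu\cap C_1$ amounts to asking that $S$ coincide with $C_1$ on a neighbourhood of $p$. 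Thus $\iota$ is the inclusion of those configurations already straightened near $p$, and the weak equivalence will follow from a \emph{parametrised} straightening procedure.

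First I would fix a Darboux ball $B$ around $p$ in which $C_3$ is the horizontal symplectic disc and $C_1$ the vertical one. Given $S\in\mC$, the defining property supplies a $J\in\mJ$ for which $S$, $C_2$ and $C_3$ are simultaneously holomorphic; in particular at $p$ the planes $T_pS$ and $T_pC_3$ are transverse symplectic planes admitting a common $\omega$-compatible complex structure. This is exactly the linear-algebraic hypothesis needed for a Gompf-type isotopy (compare the discussion of simultaneously holomorphic transverse planes preceding this lemma): there is a symplectic isotopy $\psi_t$, supported in $B$, starting at the identity, fixing $C_3$, preserving the transverse intersection at $p$, and carrying $S$ to a surface agreeing with $C_1$ on a smaller neighbourhood of $p$. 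Applying $\psi_1$ moves $S$ into $\mC_0$. Because $\psi_t$ is supported near $p$ it keeps $S$ disjoint from $C_2$ and does not disturb the global embedded-symplectic-sphere condition, and one arranges $\psi_t$ to depend continuously on the input surface and to be the identity whenever $S$ already agrees with $C_1$ near $p$, so that configurations in $\mC_0$ (in particular those over $\partial D^k$) are not moved.

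To organise this over a family $\{S_b\}_{b\in D^k}$ I would run the above fibrewise, obtaining a homotopy $S_{b,t}$ with $S_{b,0}=S_b$, $S_{b,1}\in\mC_0$, and $S_{b,t}=S_b$ for $b\in\partial D^k$. The one point requiring care beyond the straightening itself is that each $S_{b,t}$ must remain in $\mC$, that is, a compatible $J$ making $S_{b,t}$, $C_2$, $C_3$ simultaneously holomorphic must persist. This is straightforward: away from $p$ the three surfaces are pairwise disjoint, and near $p$ the deformed surface equals $C_1$, so one may prescribe an $\omega$-compatible $J$ preserving each tangent space locally and patch, using the metric-averaging construction of Lemma \ref{jcontrlma} to interpolate without leaving the compatible locus. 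Compactness of $D^k$ then confines the whole family to a fixed $B$ and a uniform choice of cut-off data.

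The main obstacle I expect is the \emph{parametrised and relative} version of the Gompf straightening: producing the isotopies $\psi_{b,t}$ so that they vary continuously in $b$, reduce to the identity on the already-standard locus (keeping the homotopy fixed over $\partial D^k$), and make canonical the auxiliary choices, namely the common compatible $J_b$ near $p$, the cut-off radius, and the primitive used to restore the symplectic form after an initial smooth straightening. The cleanest route is to phrase the straightening as a deformation retraction of a space of germs of symplectic surfaces through $p$ transverse to $C_3$ onto the standard germ of $C_1$, with the form-correcting primitive selected by the Moser-type Hodge argument used in the $\Symp_c(T^*\RP{2})$ computation (section \ref{rp2}), so that the retraction is continuous and fixes the basepoint. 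The linear-algebra subtlety flagged in the text, that not every pair of transverse symplectic planes admits a common compatible $J$, is precisely what the hypothesis that $S$, $C_2$, $C_3$ be simultaneously holomorphic is there to exclude; it causes no trouble here but must be invoked explicitly to know the straightening can even begin.
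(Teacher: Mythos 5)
Your overall strategy is the same as the paper's: the difference between $\mC$ and $\mC_0$ is a purely local condition at $p=([0:1],[1:0])$, and one removes it by a parametrised Gompf-type straightening isotopy supported near $C_3$, chosen to depend continuously on the surface and to preserve standardness, so that families already in $\mC_0$ are not disturbed. That outline, including the reduction to continuity of the auxiliary choices in Gompf's construction, matches the paper.

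The gap is in your treatment of the one step that genuinely requires care: showing that every \emph{intermediate} surface $S_{b,t}$ of the straightening remains in $\mC$, i.e.\ still admits an $\omega$-compatible $J$ making it simultaneously holomorphic with $C_2$ and $C_3$. You dismiss this as straightforward on the grounds that ``near $p$ the deformed surface equals $C_1$'' --- but that is only true at $t=1$; for $0<t<1$ the surface near $p$ is a partially straightened interpolation, and it is precisely there that the simultaneous-holomorphicity condition could fail. The paper isolates this as the only subtlety and resolves it by a local computation: in a Darboux chart in which $C_3$ is a coordinate plane, the moving surface is a graph of $f:\RR^2\to\RR^2$; the graph is symplectic iff $\det(f_*)>-1$, but can be made simultaneously $J$-holomorphic with the coordinate plane iff $\det(f_*)\geq 0$, and the Gompf isotopy moves $\det(f_*)$ monotonically towards $0$, hence never leaves the locus $\det(f_*)\geq 0$. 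You correctly flag the linear-algebra obstruction (transverse symplectic planes need not admit a common compatible $J$), but you only invoke the hypothesis to know the condition holds at $t=0$ and observe it holds trivially at $t=1$; without the monotonicity statement (or some substitute for it) the argument does not establish that the straightening homotopy stays inside $\mC$, which is what the whole lemma rests on. A secondary, minor point: the Gompf isotopy already moves the submanifold through symplectic submanifolds, so the Moser/Hodge ``form-correcting primitive'' you propose is not needed at this stage.
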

The proof of this uses a construction due to Gompf. Specifically, the
proof of Lemma 2.3 from \cite{Gom95} implies the following:

\begin{lma}\label{gompfiso}
Let $N$ be a symplectic 2-manifold and $E\rightarrow N$ be a (disc-subbundle of a) rank-2 vector bundle with a symplectic form $\omega$ on the total space making the zero-section both
symplectic and symplectically orthogonal to the fibres. Let $M$ be a 2-dimensional
symplectic submanifold of $E$, closed as a subset of $E$, intersecting $N$
transversely at a single point $x$. Then there is a symplectic isotopy $M(t)$ of $M$
for which $M(0)=M$, $M(t)=M$ outside a small neighbourhood of $M(t)\cap N$ and
$M(1)$ agrees with the fibre $E_x$ in a small neighbourhood of $x$.
\end{lma}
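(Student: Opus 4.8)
The plan is to reduce the statement to a local model near the intersection point $x$ and then carry out an explicit straightening. First I would invoke the symplectic neighbourhood theorem for the symplectic submanifold $N\subset E$: since $N$ is symplectic and symplectically orthogonal to the fibres, a neighbourhood of $x$ in $(E,\omega)$ is symplectomorphic to a neighbourhood of the origin in $(\RR^4,\omega_0)$, where $\omega_0=dp_1\wedge dq_1+dp_2\wedge dq_2$, carrying $N$ to $\{p_2=q_2=0\}$ and the fibre $E_x$ to $\{p_1=q_1=0\}$. In these coordinates everything becomes a question about symplectic planes and graphs over them, and the goal is to produce a compactly supported symplectic isotopy, supported in a small ball around $x$, that drags $M$ until it coincides with the fibre there.

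The heart of the matter is linear. Since $M$ meets $N$ transversely at $x$, the tangent plane $T_xM$ is a symplectic complement to $T_xN=\RR^2\times\{0\}$, hence the graph $\{(Av,v):v\in\RR^2\}$ of a linear map $A$; a direct computation gives $\omega_0|_{T_xM}=(\det A+1)\,\omega_{\RR^2}$, so $T_xM$ is symplectic precisely when $\det A\neq -1$ and the sign of its intersection with $N$ is $\mathrm{sign}(\det A+1)$. The fibre $E_x$ corresponds to $A=0$, which lies in the component $\{\det A>-1\}$. The crucial observation — and the reason for the remark preceding the lemma that transverse symplectic planes cannot always be made simultaneously holomorphic — is that this positivity is a genuine hypothesis: in the applications $M$ and $N$ are simultaneously $J$-holomorphic, which forces the intersection to be positive, i.e.\ $\det A>-1$. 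Granting this, $T_xM$ and $E_x$ lie in the same connected component of the space of symplectic complements to $N$, so I can join them by a path, equivalently act by a path in the (connected) stabiliser of $N$ inside $Sp(4,\RR)$ carrying $T_xM$ to $E_x$.

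Having matched tangent planes, I would flatten $M$ itself. After applying the linear symplectomorphism above we may assume $T_xM=E_x$, so on a small ball $B_\epsilon$ around $x$ the surface $M$ is a graph over $E_x$ tangent to $E_x$ at the origin; shrinking $\epsilon$ (a rescaling makes this quantitative) makes $M\cap B_\epsilon$ as $C^1$-close to $E_x$ as we like. A parametric Moser argument then produces a Hamiltonian isotopy, generated by a Hamiltonian cut off to be supported in $B_\epsilon$, carrying this nearly-flat graph onto $E_x$ inside a smaller ball. Concatenating the three isotopies — the linear straightening of the tangent plane, the rescaling, and the final Moser push — and truncating each so as to be supported in a small neighbourhood of $x$ yields the desired family $M(t)$: it equals $M$ outside a neighbourhood of $M(t)\cap N$, stays symplectic throughout, and agrees with $E_x$ near $x$.

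I expect the main obstacle to be the flattening stage, realising the straightening by an \emph{ambient} symplectic isotopy that is simultaneously supported in a shrinking ball and keeps $M$ embedded and symplectic at every moment; in particular the rescaling and the cut-off must be interleaved carefully so that truncating the generating Hamiltonian does not destroy symplecticity of the moving surface. The linear input — connectedness of the positive component $\{\det A>-1\}$, which depends on the intersection being positive — is conceptually the key point, and is exactly where the hypothesis that $M$ and $N$ can be made simultaneously holomorphic enters. Everything else follows the argument of Gompf \cite{Gom95}.
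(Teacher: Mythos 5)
Your local model and the linear-algebraic discussion are on target --- indeed the positivity point ($\det A>-1$ versus $\det A\geq 0$) is exactly the subtlety the paper flags in the paragraph following the lemma; note also that the paper does not reprove this statement at all, but quotes it as a consequence of the proof of Lemma 2.3 of \cite{Gom95}. However, the first of your three isotopies cannot exist as described. A linear symplectomorphism of $(\RR^4,\omega_0)$ preserving $N=\RR^2\times\{0\}$ automatically preserves its symplectic orthogonal complement, which by hypothesis is the fibre direction $E_x=\{0\}\times\RR^2$: concretely, the stabiliser of $N$ in $Sp(4,\RR)$ is $SL(2,\RR)\times SL(2,\RR)$ acting block-diagonally, and on the space of complements $\{(Av,v)\}$ it acts by $A\mapsto PAQ^{-1}$, which preserves $\det A$. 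Hence $E_x$ (the graph of $A=0$) is a \emph{fixed point} of this action, and no path in the stabiliser carries $T_xM$ to $E_x$ unless they already coincide. The connectedness of $\{\det A>-1\}$ is the right observation, but it must be exploited by moving the surface rather than by an ambient linear map fixing $N$: one writes $M$ locally as a graph $v\mapsto(g(v),v)$ over the fibre and contracts the graphing function, $g_s=(1-s\mu)g$, which is what Gompf does and what the paper means when it says the isotopy changes $\det(f_\star)$ monotonically towards $0$.

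Second, the step you defer to ``a parametric Moser argument'' is where the actual content of the lemma lives, and it is not a Moser argument. Contracting the graph with a cut-off $\mu$ replaces $Dg$ by $(1-s\mu)Dg - s\,(D\mu)\otimes g$, and the symplectic condition $\det(\cdot)>-1$ must survive the rank-one correction. The product $|D\mu|\,|g|$ is \emph{not} automatically small: near $x$ one only has $|g(v)|\leq C|v|$ (the quadratic bound $O(|v|^2)$ is available only after the tangent plane has already been matched, which is the step at issue), while $|D\mu|\sim 1/\epsilon$ on an annulus at radius $\epsilon$, so the correction is bounded but of order one. Controlling it is precisely the role of the auxiliary real parameters in Gompf's construction that the paper refers to, and those choices must moreover depend continuously on $M$ for the parametric application in Lemma \ref{gompfy}. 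As written, your proposal replaces this estimate by a rescaling-plus-cut-off heuristic and rests the tangent-plane step on a group action that is trivial on the relevant orbit, so both genuinely delicate points remain unresolved.
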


The isotopy defined in that proof depends on: a choice of neighbourhood of $N$
in which $M$ intersects $N$ once transversely and three auxiliary real parameters. Other choices (such as a cut-off function $\mu$) can be
made to depend only on these real parameters. It is clear from the definitions
of these parameters that they can be chosen to depend continuously on $M$ as
$M$ varies in the space of symplectic submanifolds with the
$\mC^{\infty}$-topology, and the same neighbourhood of $N$ can be used for
$M_1$ and $M_2$ which are close in the $\mC^{\infty}$-topology. We will call the
isotopy defined in that proof a \emph{Gompf isotopy}.

It also follows from Gompf's construction that Gompf isotopy preserves the space $\mC$. The only subtlety is that $\mC$ consists of configurations whose components can be made simultaneously $J$-holomorphic. One must check that at the intersection points, the tangent planes of the intersecting components can be made simultaneously $J$-holomorphic. In a Darboux chart ($\cong\RR^2\times\RR^2$) around the intersection point, where one of the intersecting components is mapped to the $\RR^2\times\{\star\}$ coordinate plane, transversality of the intersection implies one can write the other component as a graph of a map $f:\RR^2\rightarrow\RR^2$. The condition that this graph is symplectic is just that $\det(f_{\star})>-1$. The condition that the graph can be made simultaneously $J$-holomorphic with $\RR^2\times\{\star\}$ is that $\det(f_{\star})\geq 0$. In the local model used to define it, Gompf isotopy changes $\det(f_{\star})$ monotonically towards 0, so it preserves $\mC$.

\begin{proof}[Proof of Lemma \ref{gompfy}]
Let $f_1$, $f_2:(S^n,\star)\rightarrow(\mC_0,C)$ represent homotopy classes
$[f_1]$, $[f_2]$ in $\pi_n(\mC_0,C)$. We first show that
$\iota_{\star}[f_1]=\iota_{\star}[f_2]$ only if $[f_1]=[f_2]$. Suppose
$S:(S^n\times [1,2],\star)\rightarrow(\mC,C)$ is a homotopy between $\iota\circ
f_1$ and $\iota\circ f_2$.

By the symplectic neighbourhood theorem there is a neighbourhood $\nu$ of $C_3$, isomorphic to a disc-subbundle of $C_3\times\CC$ and such that $S(x,t)\cap\nu$ satisfies the hypotheses of Lemma \ref{gompfiso}. Pick the parameters small enough to define a Gompf isotopy $\Gamma$ for all spheres $S(x,t)$. Then
\[\Gamma:\left(S^n\times[1,2]\times[1,2],\star\right)\rightarrow(\mC,C)\]
gives a homotopy from $S$ to $S'=\Gamma(\cdot,\cdot,2)$. This new homotopy lies entirely in $\mC_0$. Since the original maps $f_i:S^n\rightarrow\mC_0$ landed in the space of standard configurations and Gompf isotopy preserves the property of being a standard configuration, $S'$ is indeed a homotopy connecting $f_1$ and $f_2$ in $\mC_0$. This
proves injectivity of $\iota_{\star}$.

To prove surjectivity, let $f:(S^n,\star)\rightarrow(\mC,C)$ represent a
homotopy class $[f]$. For some neighbourhood $\nu$ of $C_3$ and sufficiently small choice of parameters one obtains a Gompf isotopy rel $C$ of the image of $f$ into
$\mC_0$. This gives a homotopy class $[f']$ in $\pi_n(\mC_0,C)$ and the Gompf
isotopy is a homotopy between $\iota_{\star}[f']$ and $[f]$.

Since $\iota_{\star}$ is an isomorphism on homotopy groups, $\iota$ is a weak
homotopy equivalence.
\end{proof}

\subsubsection{Structures making a configuration holomorphic}\label{confhol}

Let $S=\bigcup_{i=1}^nS_i$ be a union of embedded symplectic 2-spheres in a symplectic 4-manifold $X$. Suppose that the various components intersect transversely and that there are no triple intersections. Suppose further that there is an $\omega$-compatible $J$ for which all the components $S_i$ are $J$-holomorphic. Let $\mH(S)$ denote the space of $\omega$-compatible almost complex structures $J$ for which all components of $S$ are simultaneously $J$-holomorphic.

\begin{lma}\label{contrH}
If the components of $S$ intersect one another symplectically orthogonally then the space $\mH(S)$ is weakly contractible.
\end{lma}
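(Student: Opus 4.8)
The plan is to build an explicit deformation retraction of $\mH(S)$ onto a single point, reusing the polar-decomposition construction from the proof of Lemma \ref{jcontrlma}. The guiding picture is that $\mH(S)$ is the space of sections of a bundle over $X$ whose fibre over $x$ is the (contractible) space of $\omega_x$-compatible complex structures on $T_xX$ away from $S$, the subspace preserving $T_xS_i$ over a smooth point of a component $S_i$, and the subspace preserving both tangent planes over an intersection point. This is where the orthogonality hypothesis does its work: at $x\in S_i\cap S_j$ we have $T_xS_j=(T_xS_i)^{\omega}$, so an $\omega$-compatible $J$ preserving $T_xS_i$ automatically preserves $(T_xS_i)^{\omega}=T_xS_j$, and the two holomorphicity constraints coincide. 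Every fibre is then a contractible space (a product of two upper half-planes), and the remaining work is to produce the retraction in a way that respects the jumps in the constraints along $S$.

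First I would isolate the linear-algebra input. Given any metric $g$ on a symplectic vector space $(V,\omega)$, the construction in Lemma \ref{jcontrlma} produces a canonical $\omega$-compatible complex structure $J_g$, depending smoothly on $g$, with $J_g=J$ whenever $g=\omega(\cdot,J\cdot)$ for a compatible $J$. The key claim is: if $W\subset V$ is a symplectic subspace with $g(W,W^{\omega})=0$, then $J_g$ preserves $W$. This follows by writing $\omega(\cdot,\cdot)=g(B\cdot,\cdot)$ with $B$ being $g$-skew, observing that $g(W,W^{\omega})=0$ forces $W^{\omega}=W^{\perp_g}$ and hence that $B$ preserves both $W$ and $W^{\perp_g}$, and noting that the polar part $J_g=B|B|^{-1}$ is then block-diagonal with respect to $W\oplus W^{\perp_g}$.

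With this in hand, fix any $J_0\in\mH(S)$ (nonempty by hypothesis). For $J\in\mH(S)$ and $s\in[0,1]$ put $g_s=\omega(\cdot,((1-s)J_0+sJ)\cdot)$, a convex combination of the two compatible metrics, and define $r_s(J)=J_{g_s}$. Then $r_0\equiv J_0$, $r_1=\id$, and each $r_s(J)$ is $\omega$-compatible and varies continuously in $(J,s)$. It remains to check $r_s(J)\in\mH(S)$. Along $S_i$, both $J_0$ and $J$ preserve $TS_i$ and, by compatibility, its symplectic complement, so $g_0$ and $g_1$ both annihilate $g(TS_i,(TS_i)^{\omega})$; convexity preserves this, whence $g_s(TS_i,(TS_i)^{\omega})=0$ and the linear-algebra claim shows $r_s(J)$ preserves $TS_i$. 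At an intersection point the claim is applied to both $T_xS_i$ and $T_xS_j$, which are each other's symplectic complements by orthogonality, so $r_s(J)$ preserves both. Thus $r_s$ is a deformation retraction of $\mH(S)$ onto $\{J_0\}$, giving genuine contractibility (hence in particular weak contractibility).

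The main obstacle is ensuring the homotopy stays inside $\mH(S)$ at the intersection points, and this is exactly the step invoking orthogonality: it guarantees both that $\mH(S)$ is nonempty and well-behaved there (recall that a transverse pair of symplectic planes cannot in general be made simultaneously holomorphic, as discussed before the lemma) and that the single-plane argument simultaneously controls both components meeting at the point. Everything else reduces to the smoothly-varying polar decomposition and the elementary fact that convex combinations preserve both positive-definiteness and the orthogonality relation $g(TS_i,(TS_i)^{\omega})=0$, which is routine.
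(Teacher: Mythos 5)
Your argument is correct, and it takes a genuinely different route from the paper's. The paper proves lemma \ref{contrH} by factoring the restriction map $\mH(S)\rightarrow\mJ(S)$ (to complex structures on the surfaces themselves) through the intermediate space $\mJ|_S$ of compatible structures on $TX|_S$, and then showing that each of the two restriction maps is a fibration with contractible fibre over a contractible base; the homotopy-lifting verifications, which involve implanting local models in Darboux charts at the intersection points and interpolating associated metrics with cut-off functions, are the laborious part of that proof. You instead contract $\mH(S)$ directly: convex interpolation of the associated metrics $g_{J_0}$ and $g_J$ followed by the polar-decomposition construction of lemma \ref{jcontrlma}, together with the linear-algebra observation that the resulting $J_g$ preserves any symplectic subspace $W$ with $g(W,W^{\omega})=0$. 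The inputs you need are all available: a compatible $J$ preserving $W$ automatically satisfies $g_J(W,W^{\omega})=0$, this condition is preserved under convex combination of metrics, and at an intersection point orthogonality gives $T_xS_j=(T_xS_i)^{\omega}$, so that block-diagonality of the polar part with respect to $W\oplus W^{\perp_g}$ controls both tangency constraints at once. Your approach buys an explicit strong deformation retraction, hence genuine (not merely weak) contractibility, and it avoids the fibration bookkeeping entirely; what the paper's longer route buys is a description of $\mH(S)$ in terms of the surface structures $\mJ(S)$ and the gauge groups acting on the normal bundles, which meshes with the way those gauge groups are used elsewhere in the argument, but for the statement of the lemma itself your proof is cleaner and stronger.
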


This lemma is proved in appendix \ref{spaceofstructures}.

\subsubsection{Gromov's theory of pseudoholomorphic curves}

The following theorem follows from Gromov's theory of pseudoholomorphic curves in symplectic 4-manifolds.

\begin{thm}[\cite{Gro85}]
Given an $\omega$-compatible almost complex structure $J$ on $X=\PP{1}\times\PP{1}$ there is a unique $J$-holomorphic curve through $([0:1],[1:0])$ in the homology class $[C_1]$.
\end{thm}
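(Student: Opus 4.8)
We want to show that for a generic (indeed, for any) $\omega$-compatible almost complex structure $J$ on $X=\PP{1}\times\PP{1}$, there is a unique $J$-holomorphic sphere in the class $[C_1]=\{pt\}\times\PP{1}$ passing through the fixed point $([0:1],[1:0])$. My plan is to deduce this as a special case of Gromov's foliation theorem (theorem \ref{folnthm} above, stated there for $S^2\times S^2$, which is diffeomorphic to $\PP{1}\times\PP{1}$ as a symplectic manifold with the split form). The class $[C_1]$ is one of the two ruling classes; it has self-intersection $0$ and satisfies $[C_1]\cdot[C_2']=1$ where $[C_2']$ is the other ruling. Since $[C_1]\cdot[C_1]=0$ and the class is represented by an embedded symplectic sphere, the expected complex dimension of the moduli space of unparametrised $J$-holomorphic spheres in this class is computed by the index formula, and one finds it is positive-dimensional, so the existence of a curve through a generic point is not by itself rigidity — the point-constraint is what cuts the dimension down to zero.

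\textbf{Key steps.}

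First I would invoke Gromov's theorem \ref{folnthm} to produce, for each such $J$, an entire $J$-holomorphic foliation $\mF_1^J$ of $X$ whose leaves lie in the class $[C_1]$. The existence half of the statement is then immediate: since $\mF_1^J$ is a foliation, some leaf passes through the prescribed point $([0:1],[1:0])$. Second, for uniqueness, I would use positivity of intersections for $J$-holomorphic curves in dimension four. Two distinct leaves of the foliation $\mF_1^J$ are disjoint (that is what it means to be a foliation), and both lie in the class $[C_1]$ with $[C_1]\cdot[C_1]=0$, consistent with disjointness. If there were two distinct $J$-holomorphic spheres $A$ and $A'$ in the class $[C_1]$ both passing through the common point, then since they share a point but are distinct irreducible curves, positivity of intersections would force their homological intersection number $[C_1]\cdot[C_1]$ to be strictly positive — but it equals $0$, a contradiction. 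Hence the curve through the point is unique.

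\textbf{Handling the foliation-versus-arbitrary-curve gap.}

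The one genuine subtlety is that I must rule out the existence of a $J$-holomorphic curve in class $[C_1]$ through the point which is \emph{not} one of the foliation leaves, and also rule out bubbling (a reducible or cusp curve in the class $[C_1]$). For the former, the positivity-of-intersections argument above already suffices: any irreducible $J$-holomorphic representative of $[C_1]$ through the point must coincide with the leaf through that point, again because two distinct such curves sharing a point contradicts $[C_1]^2=0$. For the latter, I note that the class $[C_1]$ is primitive and of minimal energy in its ray — it cannot decompose as a sum of two classes each represented by a non-constant $J$-holomorphic sphere with positive $\omega$-area, because $[C_1]=A+B$ with $\omega(A),\omega(B)>0$ would force, by positivity of intersections and the structure of $H_2(X)=\ZZ[C_1]\oplus\ZZ[C_3]$, a decomposition incompatible with $[C_1]^2=0$ and the adjunction/area constraints. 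So no genuine bubbling occurs and every $J$-holomorphic representative is an embedded sphere, necessarily a foliation leaf.

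\textbf{Main obstacle.}

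I expect the main obstacle to be not the existence (which Gromov's foliation theorem hands us directly) but the careful exclusion of bubbled configurations, since the theorem is asserted for \emph{all} compatible $J$ rather than a generic one — so one cannot appeal to transversality of the moduli space and must instead argue purely homologically via positivity of intersections and the list of effective classes. Once the possible bubble classes are enumerated using $H_2(\PP{1}\times\PP{1};\ZZ)\cong\ZZ^2$ and the intersection form, the contradiction is routine, but setting up that enumeration cleanly is where the real care is needed.
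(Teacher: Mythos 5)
The paper gives no proof of this statement at all --- it is imported verbatim from Gromov \cite{Gro85} --- and your derivation (existence from the foliation theorem \ref{folnthm}, uniqueness from positivity of intersections against $[C_1]\cdot[C_1]=0$, and exclusion of bubbling by an area/homology argument) is precisely the standard argument that the citation stands in for, so your proposal is correct and matches the intended route. The only step I would tighten is the bubbling discussion: rather than invoking adjunction and the intersection form, it suffices to note that with the split form every class $a[C_1]+b[C_3]$ has area $a+b\in\ZZ$, so every non-constant $J$-holomorphic sphere has area at least $1=\omega([C_1])$ and the class cannot decompose into two or more non-constant components.
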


If we restrict to the space $\mH(C')$ of almost complex structures which make $C_2$ and $C_3$ into $J$-holomorphic spheres then this gives us a map
\[G:\mH(C')\rightarrow\mC\]
since by positivity of intersections, the unique $J$-holomorphic curve through $([0:1],[1:0])$ in the homology class $C_1$ is then a nonstandard configuration, i.e. it cannot intersect $C_2$ and it must intersect $C_3$ transversely once.

\subsubsection{Proof of Proposition \ref{cstarccontr}}

Let $f:(S^n,\star)\rightarrow(\mC_0,C)$ be a based map. We must show that $f$ is nullhomotopic. Let $\infty\in S^n$ be the antipode to $\star$ and let $R$ be the space of great half-circles $r(t)$ connecting $r(0)=\star$ to $r(1)=\infty$. Each such half-circle defines an isotopy $f(r(t))$ of standard configurations. This isotopy extends to a small neighbourhood of the standard configurations by the symplectic neighbourhood theorem and then to a global isotopy $\psi_{r,t}\in\Symp(C)$ by Banyaga's isotopy extension theorem (since $H^2(X,S\cup C_3;\RR)=0$ for all the standard configurations). We can assume this isotopy fixes a neighbourhood of $C_2$ and $C_3$ pointwise.

Suppose $J_0$ is an $\omega$-compatible almost complex structure for which the basepoint $C\in\mC_0$ is $J_0$-holomorphic. Then the standard configuration $f(r(t))$ is $\left(\psi_{r,t}\right)_{\star}J_0$-holomorphic. Let $\overline{B}^n=S^n\setminus\{\infty\}\cup S^{n-1}$ be the compactification of $S^n\setminus\{\infty\}$ which adds an endpoint to every open half-circle $\{r(t)\}_{t\in[0,1]}$ (this is the oriented real blow-up of $S^n$ at $\infty$). Define a map: 
\begin{align*}
\tilde{f}_1:&\overline{B}^n\rightarrow \mH(C') \\
\tilde{f}_1(r,t)&=\left(\psi_{r,t}\right)_{\star}J_0\in\mH(C')
\end{align*}
which lifts $\overline{B}^n\rightarrow S^n\stackrel{f}{\rightarrow}\mC_0$ (where the first map collapses $S^{n-1}$ to the point $\infty$).
\[
\xy
(0,5)*\xycircle(10,3){-};
(-10,5)*{};(10,5)*{} **\crv{(-7,-10) & (7,-10)};
(0,-6)*{\star};(0,-20)*{\bullet};(0,-22)*{\infty};(0,-40)*{\star};
(0,-30)*\xycircle(10,10){-};
{\ar_{\tilde{f}_1}  (12,-3)*{}; (40,-3)*{}};
{\ar_{f} (12,-30)*{}; (38,-30)*{}};
{\ar (0,-8)*{}; (0,-18)*{}};
{\ar^{G} (45,-5)*{}; (45,-28)*{}};
(45,-30)*{\mC_0\subset\mC};(45,-3)*{\mH(C')};
(0,-40)*{};(0,-20)*{} **\crv{~*=<2pt>{.}(-10,-30)};
(0,-6)*{};(-3,2)*{} **\crv{~*=<2pt>{.}(-3,-6)};
(-15,-15)*{r};
(-13,-14)*{};(-3,-4)*{} **\dir{-};
(-13,-16)*{};(-5,-30)*{} **\dir{-};
(-13,7)*{S^{n-1}};(-13,-30)*{S^n};(-11,-2)*{\overline{B}^n};
(80,-20)*\xycircle(10,3){-};
(70,-20)*{};(90,-20)*{} **\crv{(73,-5) & (87,-5)};
{\ar_{\tilde{f}_2}  (75,-10)*{}; (50,-3)*{}};
{\ar^{\tilde{f}_1|_{S^{n-1}}} (80,-20)*{}; (49,-4)*{}};
(80,-15)*{D^n};
\endxy
\]
The image of the restriction $\tilde{f}_1|_{S^{n-1}}:S^{n-1}\rightarrow\mH(C')$ consists of $\omega$-compatible almost complex structures which make the standard configuration $f(\infty)$ holomorphic. The space of such almost complex structures ($\mH(f(\infty))\subset\mH(C')$) is contractible by Lemma \ref{contrH} and hence there is a map $\tilde{f}_2:D^n\rightarrow\mH(C')$ with $\tilde{f}_2|_{\partial D^n}=\tilde{f}|_{S^{n-1}}$ (where $D^n$ is the closed $n$-ball) such that $G\circ\tilde{f}_2(x)=f(\infty)$. Glue $\tilde{f}_1$ and $\tilde{f}_2$ to obtain a map $\tilde{f}=\tilde{f}_1\cup\tilde{f}_2:S^n\rightarrow \mH(C')$ for which $G\circ\tilde{f}$ is homotopic to $f$.

By Lemma \ref{contrH} $\mH(C')$ is contractible, $\tilde{f}$ is nullhomotopic via a map $H:D^{n+1}\rightarrow\mH(C')$. Therefore $G\circ H$ is a nullhomotopy of $f$ in $\mC$. But we have observed (Lemma \ref{gompfy}) that $\mC_0\rightarrow\mC$ is a weak homotopy equivalence, hence $f$ is nullhomotopic in $\mC_0$.

\section{Del Pezzo surfaces}\label{delp}

\subsection{Results}\label{delpres}

In this section, we use known results on symplectomorphism groups of affine
varieties to calculate the weak homotopy type of the symplectomorphism groups
of some Del Pezzo surfaces. We recall:

\begin{dfn}
A symplectic del Pezzo surface is the symplectic manifold underlying a smooth
complex projective surface with ample anticanonical line bundle. Equivalently
it is one of the following symplectic 4-manifolds:

\begin{itemize}
\item $Q=S^2\times S^2$ equipped with the product of the area-1 Fubini-Study
forms on each factor (this also arises as the K\"{a}hler form on a smooth
quadric hypersurface in $\PP{3}$),
\item A symplectic blow-up $\DD_n$ of $\PP{2}$ (with its anticanonical
symplectic form $3\omega_{FS}$) in $n<9$ symplectic balls of equal volume such
that
\[\omega_{\DD_n}(E_k)=\omega_{FS}(H)\]
\noindent where $E_k$ is an exceptional sphere and $H$ is a line in $\PP{2}$.
\end{itemize}
\end{dfn}

We prove the following.
\setcounter{unthm}{2}
\begin{thmun}
Let $\Symp_0(X)$ denote the group of symplectomorphisms of $X$ acting trivially
on homology and let $\Diff^+(S^2,5)$ denote the group of diffeomorphisms of
$S^2$ fixing five points.
\begin{itemize}
\item $\Symp(\DD_3)\simeq T^2$,
\item $\Symp(\DD_4)\simeq \star$,
\item $\Symp(\DD_5)\simeq \Diff^+(S^2,5)$.
\end{itemize}
\noindent where all $\simeq$ signs denote weak homotopy equivalence.
\end{thmun}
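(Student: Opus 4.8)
The plan is to mimic the strategy already carried out for $\CC^*\times\CC$ in section \ref{cstarcsect}, bootstrapping up the chain $\DD_3\to\DD_4\to\DD_5$ by peeling off one exceptional sphere at a time. The central device in each case is to identify inside $\DD_n$ (or its natural compactification) a configuration of symplectic spheres whose complement is biholomorphic to an affine variety whose compactly supported symplectomorphism group we already understand. By proposition \ref{twostein}, the weak homotopy type of $\Symp_c$ of that complement depends only on the complex manifold, so we are free to compute it in whatever Stein model is most convenient. The first task is therefore to choose, for each $n\in\{3,4,5\}$, an explicit anticanonical-type divisor $C=C_1\cup\dots\cup C_k$ of spherical components with transverse (and, after an isotopy, symplectically orthogonal) intersections and no triple points, so that the machinery of section \ref{confC} and lemma \ref{contrH} applies.

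Next I would set up the orbit--stabiliser fibration exactly as in \eqref{orbstabCtimesCstar}. Let $\Symp_0(\DD_n)$ act on the space $\mC_0(n)$ of suitably ``standard'' configurations in the class of $C$; as in the $\CC^*\times\CC$ case one shows, using Banyaga's isotopy extension theorem (available since $H^2(\DD_n,C;\RR)=0$ for these divisors), that the action is transitive, so we get a fibration $\Stab(C)\to\Symp_0(\DD_n)\to\mC_0(n)$. The first key input is weak contractibility of $\mC_0(n)$, proved by the Gromov/Gompf argument of section \ref{prfcstarccontr}: one passes to the enlarged space of nonstandard configurations via lemma \ref{gompfy}, uses Gromov's existence/uniqueness of the foliating curves to build a map $G\colon\mH(C')\to\mC$, and uses contractibility of the spaces $\mH(\cdot)$ of compatible almost complex structures (lemma \ref{contrH}) to kill homotopy. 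The second key input is to identify the stabiliser: fixing one component pointwise and passing through the gauge-transformation fibration of section \ref{gau} relates $\Stab(C)$ to the product $\Symp(C)$ of surface symplectomorphism groups of the components (section \ref{sympC}) together with $\Symp_c$ of the affine complement, which by induction/earlier theorems is $\Symp_c(\CC^2)\simeq\star$, $\Symp_c(\CC^*\times\CC)\simeq\star$, or $\Symp_c(T^*\RP{2})\simeq\ZZ$.

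With these pieces in place, each computation reduces to unwinding one homotopy long exact sequence. For $\DD_4$ I expect the complement to be weakly contractible and the residual surface/gauge contributions to cancel, giving $\Symp_0(\DD_4)\simeq\star$. For $\DD_3$ the surplus is a $T^2$ coming from the rotations of components with few marked points (section \ref{sympC}, the $\mD_1,\mD_2\simeq S^1$ factors), detected by the gauge boundary map computed in lemma \ref{localcomp}. For $\DD_5$ the relevant component carries five marked points, and the $\CC$-complement of the conic is $T^*\RP{2}$; here the surface symplectomorphism group of that component is $\mD_5\simeq P\Br(S^2,5)/(\ZZ/2)$, and assembling the sequence yields $\Symp_0(\DD_5)\simeq\Diff^+(S^2,5)$. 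Finally, recall that $\Symp_0$ and the full $\Symp$ agree up to the homology action, so the stated $\Symp(\DD_n)$ equivalences follow.

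The step I expect to be the genuine obstacle is the bookkeeping in the stabiliser sequence, specifically tracking the boundary maps $\pi_1(\Symp(C))\to\pi_0(\Stab^0(C))\to\pi_0(\mG)$ via lemma \ref{localcomp} and checking that the extra $\pi_0$ and $\pi_1$ contributions from the low-marked-point components (the $S^1$ factors of $\mD_1,\mD_2$ and the circle in $\mG_0$) cancel or survive in exactly the right pattern to give $T^2$, $\star$, and $\Diff^+(S^2,5)$ respectively. Showing there is \emph{no} hidden topology invisible to the divisor---i.e.\ that the maps in the long exact sequences are what the local model predicts and nothing more---is where the real care is needed; the Gromov--Gompf contractibility of $\mC_0(n)$, while technical, is essentially a direct adaptation of the $\CC^*\times\CC$ argument and I expect it to go through with only cosmetic changes.
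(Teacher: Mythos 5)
Your strategy is the paper's: for each $n$ pick a divisor $C$ of symplectically orthogonally intersecting $(-1)$-spheres with $H^2(\DD_n,C;\RR)=0$, prove weak contractibility of the space of standard configurations via the Gromov--Gompf argument, and unwind the chain of fibrations relating $\Symp_c(U)$, $\Stab^0(C)$, $\Stab(C)\simeq\Symp_0(\DD_n)$ to $\mG$, $\Symp(C)$ and $\mC_0$. One correction of emphasis: the paper does not bootstrap inductively from $\DD_3$ to $\DD_5$ by peeling off spheres; the three cases are run independently, with divisors whose complements are $\CC\times\CC^*$, $\CC^2$ and the complement of a conic in $\PP{2}$ (i.e.\ $T^*\RP{2}$) respectively --- exactly the three affine groups you list --- and with proposition \ref{twostein} used to transfer the Stein computations.

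The one place where your plan is missing an idea rather than merely details is the boundary map for $\DD_5$. There $\Symp_c(U)\simeq\Symp_c(T^*\RP{2})\simeq\ZZ$ is \emph{not} contractible, so $\pi_0(\Stab^0(C))\cong\ZZ^5$ carries a fifth generator, the class $\chi$ of the Dehn twist along the Lagrangian $\RP{2}\subset U$, which is invisible to the gauge group $\mG\cong\ZZ^4$. Lemma \ref{localcomp} only computes the composite $\pi_1(\Symp(C))\rightarrow\pi_0(\Stab^0(C))\rightarrow\pi_0(\mG)$, hence only gives surjectivity onto the $\ZZ^4$ factor; it says nothing about whether $\chi$ lies in the image of $\psi:\pi_1(\Symp(C))\rightarrow\pi_0(\Stab^0(C))$, and if it did not then $\pi_0(\Symp_0(\DD_5))$ would acquire an extra $\ZZ$ and the answer would not be $\Diff^+(S^2,5)$. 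The paper settles this with a genuinely global construction that cannot be read off from a neighbourhood of the divisor: the norm of the $SO(3)$-moment map on $T^*\RP{2}$ generates a Hamiltonian circle action on the complement of the zero section commuting with the cogeodesic flow; symplectic cutting produces $\PP{2}$ with the conic as reduced locus; blowing up five equivariant balls centred on the conic realises $\DD_5$ with a circle action preserving $C$, yielding a loop in $\Symp(C)$ whose image under $\psi$ is the diagonal $(1,\ldots,1)\in\ZZ^5$, which together with the analogue of Seidel's lemma 1.8 on triviality of the twist shows $\chi\in\mbox{im}\,\psi$. You need to supply this (or an equivalent) argument; ``assembling the sequence'' will not produce it. The rest of your outline, including the $T^2$ for $\DD_3$ and the exact cancellation for $\DD_4$, matches the paper's computation via lemma \ref{localcomp}.
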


The third result in the theorem deserves some comment. Seidel has shown
\cite{Sei08} that there are maps $C/G\rightarrow B\Symp(\DD_5)\rightarrow C/G$
whose composition is homotopic to the identity. Here $C$ is the configuration
space of five ordered points on $S^2$ and $G$ is the group $\mathbb{P}SL_2\CC$
of M\"{o}bius transformations of $S^2$. $G$ acts freely on $C$, so we get a
principal $G$-bundle:
\[
\begin{CD}
G @>a>> C \\
@. @VVV \\
@. C/G
\end{CD}
\]
The group $\Diff^+(S^2)$ also acts on $C$ and there is an evaluation fibration
\[
\begin{CD}
\Diff^+(S^2,5) @>>> \Diff^+(S^2) \\
@. @VVbV \\
@. C
\end{CD}
\]
The maps $a$ and $b$ are actually homotopy equivalent (in view of the fact that
the inclusion $G\rightarrow\Diff^+(S^2)$ is a homotopy equivalence), so $C/G$
is weakly homotopy equivalent to $B\Diff^+(S^2,5)$. In fact, Teichm\"{u}ller theory
shows that $\Diff^+(S^2,5)$ is weakly homotopy equivalent to its group $B$ of
components. We can find the group $B$ as follows. $\pi_1(C)$ is the group
$P\Br(S^2,5)$ of five-strand pure braids on the sphere. $\pi_1(G)$ is $\ZZ/2$,
generated by a loop of rotations through $0$ to $2\pi$. By the homotopy long
exact sequence associated to the fibration $G\rightarrow C\rightarrow C/G$,
$\pi_1(C/G)$ is the cokernel of the map $\ZZ/2\rightarrow P\Br(S^2,5)$. It is
not hard to see that the image of this map is the full twist $\tau$ (which has
order 2 in the braid group), so $B\cong P\Br(S^2,5)/\langle\tau\rangle$.

\subsection{Outline of proof}

The proofs in all three cases run along similar lines. We outline the general
picture before filling in details individually. Therefore let $(X,\omega)$
denote any of $\DD_3$, $\DD_4$ or $\DD_5$, let $\mJ$ denote the space of
$\omega$-tame almost complex structures on $X$ and let $\Symp_0(X)$ denote the
group of symplectomorphisms acting trivially on $H^*(X,\ZZ)$.

In each case we will identify a divisor $C=\bigcup_i C_i\subset X$ such
that
\begin{itemize}
\item $C$ consists of embedded $J$-holomorphic $-1$-spheres which intersect one
another symplectically orthogonally,
\item $H^2(X,C;\RR)=0$.
\end{itemize}

\begin{dfn}
A \emph{standard configuration} in $X$ will mean a configuration $S=\bigcup_i S_i$ of
embedded symplectic spheres such that
\begin{itemize}
\item $[S_i]=[C_i]$ for all $i$,
\item there exists a $J\in\mJ$ simultaneously making every component $S_i$ into a $J$-holomorphic sphere,
\item at every intersection point of the configuration, the components intersect $\omega$-orthogonally.
\end{itemize}
Let $\mC_0$ denote the space of standard configurations.
\end{dfn}

\begin{prp}\label{delpcontr}
 $\mC_0$ is weakly contractible.
\end{prp}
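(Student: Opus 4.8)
The plan is to mirror the argument establishing weak contractibility of $\mC_0$ in the $\CC^*\times\CC$ case (proposition \ref{cstarccontr}), adapting it to the divisor $C=\bigcup_i C_i$ of $-1$-spheres inside $X\in\{\DD_3,\DD_4,\DD_5\}$. The underlying strategy is to produce, for any based map $f:(S^n,\star)\rightarrow(\mC_0,C)$, a nullhomotopy by lifting $f$ through a Gromov-type map from a space of almost complex structures. The two essential inputs are: (i) the space $\mH(S)$ of $\omega$-compatible almost complex structures making all components of a configuration $S$ simultaneously holomorphic is weakly contractible when the components meet $\omega$-orthogonally (lemma \ref{contrH}); and (ii) Gromov's existence-and-uniqueness theorem for $J$-holomorphic spheres in the classes $[C_i]$, which for a $-1$-class and a tame $J$ gives a \emph{unique} embedded $J$-holomorphic sphere in each class $[C_i]$.

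First I would set up the Gromov map. Let $\mH(C')\subset\mJ$ be the space of $\omega$-tame (or compatible) almost complex structures for which a suitable fixed sub-configuration of the divisor is already holomorphic, so that by positivity of intersections the unique $J$-holomorphic representatives of the remaining classes $[C_i]$ assemble into a standard configuration. This defines $G:\mH(C')\rightarrow\mC_0$. Because each $[C_i]$ is a $-1$-class, the representatives are automatically embedded spheres of the right homology, they cannot bubble (minimal energy in their class), and positivity of intersections forces the correct intersection pattern with the fixed components; the $\omega$-orthogonality condition at intersection points is guaranteed precisely because the configuration is simultaneously $J$-holomorphic. This is the step where the hypotheses on $C$ (embedded $J$-holomorphic $-1$-spheres meeting orthogonally) are used in an essential way.

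Next, given $f:(S^n,\star)\rightarrow(\mC_0,C)$, I would lift it to a map into $\mH(C')$. As in the $\CC^*\times\CC$ argument, one extends the isotopy of standard configurations $f(r(t))$ along great half-circles $r$ in $S^n$ to ambient symplectic isotopies $\psi_{r,t}\in\Symp(X)$ using the symplectic neighbourhood theorem together with Banyaga's isotopy extension theorem — here the hypothesis $H^2(X,C;\RR)=0$ (which holds for each standard configuration $S$, since $[S_i]=[C_i]$) is exactly what allows the extension. Pushing forward a fixed $J_0$ making $C$ holomorphic produces $\tilde f_1:\overline{B}^n\rightarrow\mH(C')$ lifting $f$ off the blow-up of $S^n$ at the antipode $\infty$, with $G\circ\tilde f_1\simeq f$. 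On the boundary sphere $S^{n-1}$ the lift lands in $\mH(f(\infty))$, which is contractible by lemma \ref{contrH}, so $\tilde f_1|_{S^{n-1}}$ bounds a disc $\tilde f_2:D^n\rightarrow\mH(C')$ with $G\circ\tilde f_2\equiv f(\infty)$. Gluing gives $\tilde f:S^n\rightarrow\mH(C')$ with $G\circ\tilde f\simeq f$; since $\mH(C')$ is contractible, $\tilde f$ is nullhomotopic, whence $f$ is nullhomotopic in $\mC_0$.

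The main obstacle I anticipate is verifying that the Gromov map $G$ is genuinely well-defined and continuous on all of $\mH(C')$ when several $-1$-spheres are present simultaneously. One must rule out, uniformly in $J$, any degeneration (bubbling, or spheres in different classes coinciding or intersecting non-transversely) and confirm that the assembled representatives always intersect $\omega$-orthogonally so that the image lies in $\mC_0$ rather than merely in some larger space of configurations. For $\DD_4$ and especially $\DD_5$, where the divisor is larger and the combinatorics of intersections is richer, one may need a preliminary Gompf-isotopy step (analogous to lemma \ref{gompfy}) to reconcile arbitrary standard configurations with the normalized orthogonal model near intersection points, and to ensure the simultaneous-holomorphicity condition is preserved throughout. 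I expect the bulk of the technical work to lie in this bookkeeping, with the homotopy-theoretic skeleton being formally identical to the $\CC^*\times\CC$ case.
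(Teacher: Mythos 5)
Your proposal follows essentially the same route as the paper: the paper's own proof is a two-line sketch stating that the argument mirrors proposition \ref{cstarccontr}, that the Gompf-isotopy step is more involved because several components must be isotoped at once, and that the key input from Gromov theory is exactly the existence-and-uniqueness statement for embedded $J$-holomorphic $-1$-spheres (theorem \ref{grexist}) that you invoke. Your reconstruction, including the lifting through the Gromov map, the use of lemma \ref{contrH}, and the anticipated bookkeeping at intersection points, matches the intended argument.
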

\begin{proof}
The proof of this proposition is very similar to the proof of Proposition \ref{cstarccontr}. The Gompf isotopy argument is slightly more involved, as there are several components that need to be isotoped at once. The crucial input from Gromov's theory of pseudoholomorphic curves is the following:
\begin{lma}[\cite{Pin2}, Lemma 1.2]\label{grexist}
Let $(M, \omega)$ be a symplectic 4-manifold not diffeomorphic to $\PP{2}\#\overline{\PP{2}}$.
Then, for any choice of $\omega$-tame almost complex structure $J$, all symplectic exceptional classes of minimal symplectic area are represented by an embedded $J$-holomorphic
sphere.
\end{lma}
\end{proof}
In particular, any $S_1,S_2\in\mC_0$ are isotopic through standard configurations. The property that the configurations are symplectically orthogonal where they intersect and the condition $H^2(X,C;\RR)=0$ allow us to extend such an isotopy to a global symplectomorphism of $X$ by Banyaga's theorem. Therefore:
\begin{prp}
$\Symp_0(X)$ acts transitively on $\mC_0$.
\end{prp}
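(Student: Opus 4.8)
The plan is to reduce transitivity to the path-connectedness of $\mC_0$ together with two extension theorems, exactly paralleling the transitivity lemma already established for $\CC^*\times\CC$. Since proposition \ref{delpcontr} shows that $\mC_0$ is weakly contractible, it is in particular path-connected, so given two standard configurations $S_0$ and $S_1$ I would choose a path $S_t$ in $\mC_0$ joining them. Lifting through the quotient by $\Diff(\PP{1})$, this path furnishes a smooth isotopy of the embedded symplectic configuration $\bigcup_i S_i(t)$, in which each $S_t$ is again a standard configuration.

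Next I would promote this isotopy of the configuration to an isotopy of a neighbourhood. Because every $S_t$ consists of embedded symplectic spheres in the fixed homology classes $[C_i]$ meeting $\omega$-orthogonally at their intersection points, the symplectic neighbourhood theorem supplies, for each $t$, a symplectomorphism from a fixed model neighbourhood onto a neighbourhood of $S_t$; the $\omega$-orthogonality hypothesis is precisely what makes the local model at each intersection point standard and forces these identifications to vary smoothly in $t$. Composing, I obtain a symplectic isotopy of a neighbourhood of $S_0$ carrying it onto a neighbourhood of $S_1$ and inducing the given isotopy on the configuration itself.

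The decisive step is the global extension. Here I would invoke Banyaga's symplectic isotopy extension theorem (\cite{MS05}, theorem 3.19): since $H^2(X,C;\RR)=0$ for every standard configuration, this being one of the two defining properties of the divisor $C$, the obstruction to extending the neighbourhood isotopy to a genuinely symplectic global isotopy vanishes, and one obtains a path $\psi_t$ in $\Symp(X)$ with $\psi_0=\id$ and $\psi_1(S_0)=S_1$. I expect this to be the main obstacle, as one must verify that the neighbourhood isotopy is symplectic rather than merely smooth and that the cohomological hypothesis of Banyaga's theorem is satisfied uniformly along the path of configurations.

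Finally, because $\psi_t$ is a continuous path of symplectomorphisms starting at the identity, and the action of $\Symp(X)$ on the discrete group $H_*(X;\ZZ)$ is locally constant, every $\psi_t$ acts trivially on homology. Hence $\psi_1\in\Symp_0(X)$ and it carries $S_0$ to $S_1$, which establishes transitivity.
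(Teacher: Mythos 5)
Your proposal is correct and follows essentially the same route as the paper: path-connectedness of $\mC_0$ from its weak contractibility, extension of the isotopy to a neighbourhood via the symplectic neighbourhood theorem (using the $\omega$-orthogonality at intersection points), and global extension via Banyaga's theorem using $H^2(X,C;\RR)=0$, with the resulting symplectomorphism lying in $\Symp_0(X)$ because it is isotopic to the identity. The paper's own argument is just a terser version of this, referring back to the analogous transitivity lemma proved for $\CC^*\times\CC$.
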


Banyaga's theorem now gives us a fibration:
\[
\begin{CD}
\Stab(C) @>>> \Symp_0(X) \\
@. @VVV \\
@. \mC_0
\end{CD}
\]
so the group $\Stab(C)$ of symplectomorphisms of $X$ which fix the
configuration $C$ setwise (and act trivially on the set of components) is
weakly homotopy equivalent to $\Symp_0(X)$ (by the homotopy long exact
sequence, since $\mC_0$ is weakly contractible).

The next step is to investigate $\Stab(C)$. An element $\phi\in\Stab(C)$ restricts to give an element $\left(\phi|_{C_1},\ldots,\phi|_{C_n}\right)\in\Symp(C)$ (see section \ref{confC}). Working in a neighbourhood of $C$, pulling back Hamiltonians to the normal bundles and cutting them off outside a certain radius, one can prove homotopy-lifting for this restriction map (see for example the proof of Proposition \ref{psifib} later). Therefore the restriction map fits into a fibration:
\begin{equation}\tag{*}
\begin{CD}
\Stab^0(C) @>>> \Stab(C) \\
@. @VVV \\
@. \Symp(C)
\end{CD}
\end{equation}
where $\Stab^0(C)$ is the group of symplectomorphisms of $X$ which fix $C$
pointwise.

As explained in section \ref{sympC}, the group $\Symp(C)$ can be understood purely in terms of $\Diff^+(C)$, thanks to Moser's theorem, and thereby reduced to a problem in Teichm\"{u}ller theory. To understand the stabiliser $\Stab^0(C)$, we need to consider the fibration:
\[
\begin{CD}
\Stab^1(C) @>>> \Stab^0(C) \\
@. @VVV \\
@. \mG
\end{CD}
\]
which takes a symplectomorphism fixing $C$ to the induced map on the normal bundles of $C$ (see section \ref{gau}). Here $\mG$ is the group of symplectic gauge transformations of the normal bundles to components of $C$ and $\Stab^1(C)$ consists of those symplectomorphisms fixing $C$ pointwise and acting trivially on the normal bundles of its components. By the symplectic neighbourhood theorem:
\begin{lma}
$\Stab^1(C)$ is weakly homotopy equivalent to the group of compactly supported
symplectomorphisms of $U=X\setminus C$.
\end{lma}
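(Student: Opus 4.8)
The plan is to show that the extension-by-the-identity inclusion $\Symp_c(U)\hookrightarrow\Stab^1(C)$ is a weak homotopy equivalence by exhibiting an explicit deformation retraction of $\Stab^1(C)$ onto its image, namely the subgroup of symplectomorphisms that are the identity on a neighbourhood of $C$. The first thing to notice is that the defining hypotheses of $\Stab^1(C)$ force each $\phi$ to be close to the identity near $C$: since $\phi$ fixes $C$ pointwise, $d\phi$ is the identity on $TC$; and since $\phi$ is symplectic and fixes the symplectic subspace $TC$, it preserves its symplectic complement $\nu=TC^{\omega\bot}$, on which it acts trivially by the very definition of $\Stab^1(C)$. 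As $d\phi$ preserves the orthogonal symplectic splitting $TC\oplus\nu$ and is the identity on each summand, we get $d\phi=\mathrm{id}$ along $C$, so by continuity $\phi$ is $\mC^1$-close to the identity on a sufficiently small neighbourhood of $C$. It is exactly here that the $\Stab^1$ (rather than $\Stab^0$) hypothesis is used.

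Consequently I would pass to a Weinstein neighbourhood of the diagonal $\Delta\subset(X\times X,\omega\oplus(-\omega))$, identified with a neighbourhood of the zero section of $T^*X$. On a small enough neighbourhood $V_0$ of $C$ the graph of $\phi$ is then the graph of a closed $1$-form $\alpha_\phi=dF_\phi$; since $\phi$ fixes $C$ pointwise, $\alpha_\phi$ vanishes along $C$, and choosing $F_\phi$ to vanish on $C$ gives $F_\phi=O\bigl(\mathrm{dist}(\cdot,C)^2\bigr)$. Now fix a cut-off family $\rho_s:V_0\to[0,1]$, $s\in[0,1]$, with $\rho_s\equiv 1$ near $\partial V_0$ for every $s$, with $\rho_0\equiv 1$, and with the zero-set of $\rho_s$ growing monotonically until $\rho_1$ vanishes on a neighbourhood of $C$. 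Define $\phi_s$ to equal $\phi$ on $X\setminus V_0$ and, on $V_0$, to be the symplectomorphism whose Weinstein generating function is $\rho_sF_\phi$; these definitions agree near $\partial V_0$, where $\rho_s\equiv 1$ and $F_\phi$ generates $\phi$. Then $\phi_0=\phi$, $\phi_1$ is the identity near $C$, and each $\phi_s$ is a genuine symplectomorphism because the quadratic vanishing of $F_\phi$ keeps $d(\rho_sF_\phi)=\rho_s\,dF_\phi+F_\phi\,d\rho_s$ small on the support of $d\rho_s$. This defines a deformation retraction $\phi\mapsto\phi_s$ of $\Stab^1(C)$ onto $\Symp_c(U)$.

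To upgrade this to a statement about all homotopy groups I would run the construction over a compact parameter space. A compact family in $\Stab^1(C)$ is, by compactness of the family together with that of $C$, uniformly $\mC^1$-close to the identity on one common neighbourhood $V_0$, so the generating functions $F_\phi$ and hence the retraction depend continuously on $\phi$ there. The direct-limit subtlety --- that membership in $\Symp_c(U)$ only requires triviality on \emph{some} neighbourhood of $C$ --- is handled exactly as in proposition \ref{WGimpliesgood}: the retraction pushes the entire compact family to be trivial on one common neighbourhood of $C$, so that it lands in a single $\Symp_K(U)$.

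The main obstacle is to keep the generating-function description valid and symplectic throughout the interpolation, uniformly over a compact family: one must control $d(\rho_sF_\phi)$ on the collar where $\rho_s$ transitions, and it is precisely the quadratic vanishing of $F_\phi$ along $C$ --- itself a consequence of the symplectic-normal-bundle argument of the first paragraph --- that makes this possible once $V_0$ is shrunk appropriately. A secondary technical point is the local model at the intersection points of $C$, where the Weinstein and symplectic-neighbourhood pictures must be reconciled; here the standing assumption that the components of $C$ meet $\omega$-orthogonally supplies the clean product-of-disc-bundles model in which the cut-offs and the estimates can be carried out simultaneously in both normal directions.
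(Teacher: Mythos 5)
Your construction is correct, and it is essentially the standard way of supplying the detail that the paper disposes of in a single appeal to the symplectic neighbourhood theorem (here, and in the identical step inside lemma \ref{stabwkcontr}): identify germs of symplectomorphisms near $C$ with closed $1$-forms via a Weinstein neighbourhood of the diagonal, then cut off the generating function. One estimate should be tightened. From $\alpha_\phi|_C=0$ alone you only get $F_\phi=O(r^2)$ (with $r$ the distance to $C$), and that much already holds for every element of $\Stab^0(C)$ --- so, taken literally, your stated vanishing order cannot be what separates $\Stab^1$ from $\Stab^0$. What the $\Stab^1$ hypothesis buys, via your own first-paragraph observation that $d\phi=\mathrm{id}$ along $C$, is $\nabla\alpha_\phi|_C=0$ and hence $F_\phi=O(r^3)$. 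This extra order is not cosmetic: for $\rho_sF_\phi$ to generate a symplectomorphism the graph of $d(\rho_sF_\phi)$ must project diffeomorphically to both factors, which requires a bound on the Hessian of $\rho_sF_\phi$, and the worst term $F_\phi\,\nabla d\rho_s$ is only $O(r^2)\cdot O(s^{-2})=O(1)$ under quadratic vanishing (with a constant depending on $\phi$, so not necessarily below the threshold set by the Weinstein chart), whereas under cubic vanishing all terms are $O(s)$ and the interpolated forms are genuinely graphs of symplectomorphisms for small $V_0$. With that correction, the interpolation, its continuity over compact families, and the direct-limit bookkeeping go through exactly as you describe; the reconciliation at the intersection points is in fact painless in your formulation, since the generating-function picture lives on $T^*X$ and only needs $V_0$ to retract onto $C$ and $F_\phi$ to vanish to the stated order along it.
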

In all our cases we will understand $\Symp_c(U)$ and $\mG$. The homotopy exact
sequences of the various fibrations (pictured below) will allow us to work
backward, finding the homotopy groups of $\Stab^0(C)$ and
$\Stab(C)\simeq\Symp_0(X)$.
\[
\begin{CD}
\Symp_c(U) @>>> \Stab^0(C) @>>> \Stab(C) @>{\simeq}>> \Symp_0(X) \\
@. @VVV @VVV @VVV \\
@. \mG @. \Symp(C) @. \Orb(C)\simeq\star
\end{CD}
\]

\subsection{The case $\DD_3$}

Here we consider the configuration of exceptional curves in the homology
classes $S_{12}=H-E_1-E_2$, $S_{13}=H-E_1-E_3$, $S_{23}=H-E_2-E_3$, $E_1$ and
$E_2$. If we included $E_3$, these would form a hexagonal configuration, but in
order to ensure $H^2(X,C;\RR)=0$, we need to omit $E_3$.
\[
\xy
(0,-3)*{y};(20,-3)*{z};
(36,-14)*{w};(-16,-14)*{x};
(0,0)*{};(20,0)*{} **\dir{-};
(10,3)*{S_{12}};
(-14,-14)*{};(0,0)*{} **\dir{-};
(-10,-5)*{E_1};
(34,-14)*{};(20,0)*{} **\dir{-};
(30,-5)*{E_2};
(-14,-14)*{};(0,-28)*{} **\dir{-};
(-10,-25)*{S_{13}};
(20,-28)*{};(34,-14)*{} **\dir{-};
(30,-25)*{S_{23}};
\endxy
\]
The complement of $U=X\setminus C$ is biholomorphic to $\CC\times\CC^*$ and
$\omega$ restricted to $U$ is $-dd^ch$ for a plurisubharmonic function $h$ on
$U$. By Proposition \ref{twostein}, $\Symp_c(U)\simeq\Symp_c(\CC\times\CC^*)$.
By Theorem \ref{cstarcthm}, $\Symp_c(\CC\times\CC^*)$ is
contractible, so $\Stab^0(C)\simeq\mG$. Let us calculate the groups $\mG$ and
$\Symp(C)$:
\begin{itemize}
\item $\mG(S_{13})\cong\mG(S_{23})\cong\mG_1\simeq\star$,
\item $\mD(S_{13})\cong\mD(S_{23})\cong\mD_1\simeq S^1$,
\item $\mG(S_{12})\cong\mG(E_1)\cong\mG(E_2)\cong\mG_2\simeq\ZZ$,
\item $\mD(S_{12})\cong\mD(E_1)\cong\mD(E_2)\cong\mD_2\simeq S^1$,
\end{itemize}
\noindent therefore $\mG\simeq\ZZ^3$ and $\Symp(C)\simeq (S^1)^5$. Since
$\Stab^0(C)\simeq\mG$ and $\Stab(C)\simeq\Symp(\DD_3)$, the fibration
(\ref{stab0fib}) yields the long exact sequence:
\[1\rightarrow\pi_1(\Symp(\DD_3))\rightarrow\ZZ^5\rightarrow\ZZ^3\rightarrow\pi_0(\Symp(\DD_3))\rightarrow
1\]
The calculation reduces to understanding the map $\ZZ^5\rightarrow\ZZ^3$. This
comes from a map $\pi_1(\Symp(C))\rightarrow\pi_0(\mG)$, so this can actually
be understood purely by considering a neighbourhood of $C$ as explained in section \ref{confC}. Let us pick a basis for $\pi_1(\Symp(C))$. Let $\rot(C_j)$ denote the element
represented by the loops $\phi_{2\pi}$ defined in Lemma \ref{localcomp}, i.e. the loop of sympletomorphisms which rotate $C_j$ through $2\pi$ and leave the other components fixed. These generate the group $\pi_1(\Symp(C))$.

By Lemma \ref{localcomp}, the map $\ZZ^5\rightarrow\ZZ^3$ is:
\[\begin{array}{cccc}
\rot(S_{13}) & \mapsto & g_{E_1}(x) & \in\pi_0(\mG(E_1)) \\
\rot(S_{23}) & \mapsto & g_{E_2}(w) & \in\pi_0(\mG(E_2))
\end{array}\]
and
\[\begin{array}{ccll}
\rot(E_1) & \mapsto & (0,g_{S_{13}}(x)) &
\in\pi_0(\mG(S_{13}))\times\pi_0(\mG(S_{12}) \\
\rot(E_2) & \mapsto & (g_{S_{12}}(z),0) &
\in\pi_0(\mG(S_{12}))\times\pi_0(\mG(S_{23})) \\
\rot(S_{12}) & \mapsto & (g_{E_1}(y),g_{E_2}(z)) &
\in\pi_0(\mG(E_1))\times\pi_0(\mG(E_2))
\end{array}\]
Therefore the map is surjective with kernel of rank 2. This implies
\[\pi_1(\Symp_0(\DD_3))=\ZZ^2,\ \pi_0(\Symp_0(\DD_3))=0\]
as stated.

\subsection{The case $\DD_4$}

The configuration $C$ consists of exceptional spheres in the homology classes
$S_{12}=H-E_1-E_2$, $S_{34}=H-E_3-E_4$, $E_1$, $E_2$, $E_3$ and $E_4$.
\[
\xy
(0,-20)*{};(30,0)*{} **\dir{-};
(18,0)*{};(48,-20)*{} **\dir{-};
(0,-15)*{};(10,-24)*{} **\dir{-};
(7,-10)*{};(17,-19)*{} **\dir{-};
(48,-15)*{};(38,-24)*{} **\dir{-};
(41,-10)*{};(31,-19)*{} **\dir{-};
(12,-26)*{E_1};(19,-21)*{E_2};
(37,-26)*{E_3};(30,-21)*{E_4};
(14,-6)*{S_{12}};(34,-6)*{S_{34}};
\endxy
\]
Let us calculate the homotopy types of $\mG$ and $\Symp(C)$:
\begin{itemize}
\item $\mG(S_{12})\cong\mG(S_{34})\cong\mG_3\simeq\ZZ^2$,
\item $\mD(S_{12})=\mD(S_{34})=\mD_3\simeq \star$,
\item $\mG(E_1)\cong\cdots\cong\mG(E_4)\cong\mG_1\simeq\star$,
\item $\mD(E_1)\cong\cdots\cong\mD(E_4)=\mD_1\simeq S^1$,
\end{itemize}
Therefore $\mG\simeq\ZZ^4$ and $\Symp(C)\simeq (S^1)^4$.

The complement $U=X\setminus C$ is biholomorphic to $\CC^2$ and the restriction
of $\omega$ to $U$ arises as $-dd^ch$ for a plurisubharmonic function $h$ on
$U$, so by Proposition \ref{twostein} $\Symp_c(U)\simeq\Symp_c(\CC^2)$. Gromov
has shown $\Symp_c(\CC^2)\simeq\star$. This simplifies the fibrations:
\[
\begin{CD}
\star @>>> \Stab^0(C) @>>> \Stab(C)\simeq\Symp_0(\DD_4) \\
@. @VV{\simeq}V @VVV \\
@.\mG\simeq\ZZ^4 @. \Symp(C)\simeq(S^1)^4
\end{CD}
\]
We therefore get a long exact sequences of homotopy groups:
\[1\rightarrow\pi_1(\Symp_0(\DD_4))\rightarrow\ZZ^4\rightarrow\ZZ^4\rightarrow\pi_0(\Symp_0(\DD_4))\rightarrow
1\]
In the notation introduced in the previous section, the map
$\ZZ^4\rightarrow\ZZ^4$ is given by:
\[\rot(E_i,x_i)\mapsto g_{S_{ik}}(x_i)\in\pi_0(\mG(S_{ik}))\]
so it is an isomorphism and all the homotopy groups of $\Symp_0(\DD_4)$ vanish.

\subsection{The case $\DD_5$}

The relevant configuration $C$ is the total transform of the conic through the
five blow-up points, i.e. the exceptional curves in homology classes
$E_1,\ldots,E_5$ and $Q=2H-\sum_{i=1}^5E_i$.
\[
\xy
(0,0)*{};(48,0)*{} **\dir{-};(50,0)*{Q};
(8,7)*{};(8,-7)*{} **\dir{-};(8,-9)*{E_1};
(16,7)*{};(16,-7)*{} **\dir{-};(16,-9)*{E_2};
(24,7)*{};(24,-7)*{} **\dir{-};(24,-9)*{E_3};
(32,7)*{};(32,-7)*{} **\dir{-};(32,-9)*{E_4};
(40,7)*{};(40,-7)*{} **\dir{-};(40,-9)*{E_5};
\endxy
\]
Let us calculate the homotopy types of $\mG$ and $\Symp(C)$:
\begin{itemize}
\item $\mG(Q)\cong\mG_5\simeq\ZZ^4$,
\item $\mD(Q)\cong\Diff(5,S^2)$
\item $\mG(E_1)\cong\cdots\cong\mG(E_5)\cong\mG_1\simeq\star$,
\item $\mD(E_1)\cong\cdots\cong\mD(E_5)\cong\mD_1\simeq S^1$,
\end{itemize}
so $\mG\simeq\ZZ^4$ and $\Symp(C)\simeq\Diff(5,S^2)\times(S^1)^5$.

The complement $U=X\setminus C$ is biholomorphic to the complement of a conic
in $\PP{2}$ and the restriction of $\omega$ to $U$ is $-dd^ch$ for a
plurisubharmonic function $h$, so by Proposition \ref{twostein}
$\Symp_c(U)\simeq\Symp_c(T^*\RP{2})$. By section \ref{rp2},
$\Symp_c(T^*\RP{2})\simeq\ZZ$. The fibrations become:
\[
\begin{CD}
\ZZ @>>> \Stab^0(C) @>>> \Stab(C)\simeq\Symp(\DD_5) \\
@. @VV{\simeq}V @VVV \\
@.\mG\simeq\ZZ^4 @. \Symp(C)\simeq\Diff(5,S^2)\times(S^1)^5
\end{CD}
\]
From this we deduce that $\Stab^0(C)$ is weakly equivalent to a discrete group $Z$ which is an extension of $\ZZ^4$ by $\ZZ$ and that
the relevant long exact sequence of homotopy groups is:
\[1\rightarrow\pi_1(\Symp(\DD_5))\rightarrow \ZZ^5\stackrel{\psi}{\rightarrow}
Z\rightarrow\pi_0(\Symp(\DD_5))\rightarrow B\rightarrow 1\]
where $B$ is the group $P\Br(5,S^2)/\langle\tau\rangle$ of components of
$\Diff(5,S^2)$.

The key, then, is to understand this map $\psi:\ZZ^5\rightarrow Z$. The
composition of this map with $\pi_0(\Stab^0(C))\rightarrow\mG\cong\ZZ^4$ is
surjective by Lemma \ref{localcomp} (the preimage of
$g_{Q}(x_i)\in\pi_0(\mG(Q))$ is $\rot(E_i,x_i)$). It remains to show that if
$\chi$ is the isotopy class of the Dehn twist in the Lagrangian $\RP{2}\subset
U$ then $\chi=\psi(\eta)$ for some $\eta\in\pi_1(\Symp(C))$.

We begin with a lemma whose proof is analogous to that of Lemma 1.8 from
\cite{Sei08} (which is the same statement for Lagrangian spheres):

\begin{lma}
Let $L\subset X$ be a Lagrangian $\RP{2}$, $\mho$ a Weinstein neighbourhood of
$L$ and suppose there is a Hamiltonian circle action on $X\setminus L$ which
commutes with the round cogeodesic flow on $\mho\setminus L$. Then the Dehn
twist in $L$ is symplectically isotopic to $\id$.
\end{lma}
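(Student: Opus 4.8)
The plan is to produce an explicit symplectic isotopy in $\Symp(X)$ from $\id$ to $\tau_L^{\pm1}$ by cutting off the generating Hamiltonian of the given circle action near $L$; this is the adaptation to $\RP{2}$ of Seidel's argument for spheres. First I would fix the standard model of the Dehn twist. On the Weinstein neighbourhood $\mho\cong D^*_\epsilon\RP{2}$ the norm function $r=|\xi|$ generates (away from the zero-section) the normalized cogeodesic flow $\sigma^t$, and since every geodesic of the round $\RP{2}$ closes up with a common length this flow is periodic with some period $T$. Unlike the sphere, where the half-period flow descends to the antipodal map, no proper fraction of $T$ descends to a diffeomorphism of $\RP{2}$, so the model twist is built from the \emph{full} period: set $\tau_L(\xi)=\sigma^{R(r)}(\xi)$ on $\mho\setminus L$ with $R$ smooth, $R(0^+)=T$ and $R\equiv 0$ for $r\geq\epsilon$. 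Because $\sigma^T=\id$ on every level set of $r$, this map is supported in $\mho$, restricts to the identity on $L$, and is a genuine symplectomorphism of $X$.

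Next I would bring in the circle action. Write $P\colon X\setminus L\to\RR$ for its (period-$1$) moment map, so $g_s=\phi^s_P$, and observe that commuting with $\sigma^t$ on $\mho\setminus L$ is the statement $\{P,r\}=0$ there, so that $g_s$ preserves the level sets of $r$ and their geodesic-circle fibration. Using this (after a preliminary isotopy supported near $L$, which does not alter isotopy classes) I would normalize $P$ so that near $L$ its flow is a reparametrization of $\sigma^t$ whose orbits are exactly the geodesic circles and which runs through one full period $T$ as $s$ ranges over $[0,1]$. I would then choose a cut-off function $\rho$ on $X$ with $\rho\equiv 1$ outside $\mho$ and $\rho\equiv 0$ on a neighbourhood of $L$. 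Since $\rho$ is supported away from $L$, the function $\rho P$ extends smoothly over all of $X$ (vanishing near $L$), so its Hamiltonian flow $G_s=\phi^s_{\rho P}$ is a genuine symplectic isotopy of $X$ with $G_0=\id$. Outside $\mho$ one has $G_s=g_s$, whence $G_1=g_1=\id$ there, while near $L$ one has $G_s=\id$; thus $G_1$ is supported in the collar $\{a<r<b\}\subset\mho$ and fixes both boundary components. This $G_s$ is the promised path issuing from the identity, and it remains only to identify its endpoint.

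The main obstacle is precisely this identification, namely showing $G_1=\tau_L^{\pm1}$. The map $G_1$ is the fibered Dehn twist obtained by transporting the inner region $\{r<b\}$ back across one full period of the circle action, and the normalization of the previous step identifies its orbit circles with the geodesic circles and its period with $T$; a comparison of winding numbers in $H_1(U^*\RP{2})$ of a level set should then show that $G_1$ agrees with the model twist $\sigma^{R(r)}$ up to an ambient symplectic isotopy supported in $\mho$. The delicate point---where both the commuting hypothesis and the detailed geometry of the cogeodesic flow on $T^*\RP{2}$ are genuinely needed, exactly as in Seidel's Lemma 1.8 for spheres---is verifying that the action's orbits near $L$ really are the geodesic circles (so that cutting off $P$ produces the geodesic twist rather than some transverse fibered twist) and that it winds exactly once. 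Granting this comparison, one concludes $\tau_L^{\pm1}=G_1\simeq G_0=\id$ in $\Symp(X)$, which is the assertion of the lemma.
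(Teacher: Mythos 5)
Your argument is exactly the one the paper intends: the paper gives no proof of this lemma beyond the remark that it is analogous to Lemma 1.8 of \cite{Sei08}, and that argument is precisely yours --- truncate the moment map of the circle action near $L$ and recognise the time-one map of the cut-off Hamiltonian as $\tau_L^{\pm 1}$, with the full period $T$ of the cogeodesic flow on $U^*\RP{2}$ playing the role that the half-period plays for $S^2$. The one step you leave open (that the orbits of the action near $L$ are the geodesic circles and wind exactly once, so that $G_1$ is the geodesic fibered twist rather than some other collar-supported map) is indeed where all the content sits, and your suspicion is well placed: merely commuting with the cogeodesic flow does not force this --- the lift to $T^*\RP{2}$ of a rotation of $\RP{2}$ commutes with the cogeodesic flow but has quite different orbits, and cutting off its Hamiltonian would produce a different fibered twist. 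In the only application made of the lemma, however, the circle action is generated by $\|\mu\|=|p|$, whose flow on $\mho\setminus L$ \emph{is} a constant reparametrisation of the cogeodesic flow, so the comparison you ask for is immediate and your normalisation step is vacuous; read with that (implicitly intended) stronger hypothesis, your proof is complete and coincides with the one the paper delegates to Seidel.
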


Let $\mu$ be the moment map for the $SO(3)$-action on $T^*\RP{2}$. Then
$||\mu||$ generates a Hamiltonian circle action on $T^*\RP{2}\setminus\RP{2}$
which commutes with the round cogeodesic flow. Symplectically cutting along a
level set of $||\mu||$ gives $\PP{2}$ and the reduced locus is a conic. Pick
five points on the conic and $||\mu||$-equivariant balls of equal volume
centred on them. $\DD_5$ is symplectomorphic to the blow up in these five balls
and the circle action preserves the exceptional locus (the union of the five
blow-up spheres and the proper transform of the conic). Hence we have
constructed a loop in $\Symp(C)$ which maps to $\chi$. Since the circle action
simultaneously rotates the normal bundles to the blow-up spheres it should be
clear that this element corresponds to the diagonal element
$(1,\ldots,1)\in\ZZ^5$.

This completes the proof of Theorem \ref{sympgrps}.

\section{The $A_n$-Milnor fibres}\label{milnorsect}

In this section we prove the following theorem:
\setcounter{unthm}{3}
\begin{thmun}
Let $W$ be the $A_n$-Milnor fibre, the affine variety given by the equation
\[x^2+y^2+z^n=1,\]
and let $\omega$ be the K\"{a}hler form on $W$ induced from the ambient
$\CC^3$. Then the group of compactly supported symplectomorphisms of
$(W,\omega)$ is weakly homotopy equivalent to its group of components. This group of components injects homomorphically into the braid group $\Br_{n}$ of $n$-strands on the disc.
\end{thmun}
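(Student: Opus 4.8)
The plan is to exploit the Lefschetz fibration structure of $W$. Writing $u=x+iy$, $v=x-iy$, the equation becomes $uv=1-z^n$, so the projection $\pi:W\to\CC$, $(u,v,z)\mapsto z$, is a conic bundle whose generic fibre is the affine conic $\{uv=c\}\cong\CC^*$ (a cylinder) and which degenerates to a node over each of the $n$ roots of $z^n=1$. One checks directly that these are the only critical points of $\pi$ and that they are nondegenerate, so $\pi$ is a genuine Lefschetz fibration with exactly $n$ critical values $\zeta_1,\dots,\zeta_n$ and cylindrical fibre. Compactifying each fibre $\{uv=c\}\subset\CC^2$ to a $\PP{1}$ (adding the two points at $u=\infty$ and $v=\infty$), compactifying the base to $\PP{1}$ and resolving, one obtains a rational surface $X$ carrying a $\PP{1}$-fibration $\bar{\pi}:X\to\PP{1}$ with $n$ \emph{reducible} (nodal) fibres, and $W$ is realised as the complement $X\setminus D$ of a configuration $D$ of symplectic spheres (the two sections at infinity, the fibre over $z=\infty$, and the components created in resolving the nodes). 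After an application of Proposition \ref{twostein} to replace the induced form by a convenient one, this reduces the problem to symplectomorphisms of $X$ supported away from $D$, exactly the set-up of the Del Pezzo computations.

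First I would prove the analogue of Propositions \ref{cstarccontr} and \ref{delpcontr}: the space $\mC$ of ``standard fibration configurations'' --- configurations of symplectic spheres homologous to the closed-up fibre and its nodal degenerations, simultaneously holomorphic for some compatible $J$ and meeting $D$ in the standard way --- is weakly contractible. As before this rests on Gromov's existence and uniqueness theorem for the fibre class (each $J$ yields a holomorphic foliation by fibre-class curves with $n$ nodal leaves), on the Gompf isotopy argument (Lemma \ref{gompfy}) to standardise behaviour near $D$, and on the contractibility of the space $\mH(S)$ of structures making a configuration holomorphic (Lemma \ref{contrH}). Granting this, the usual tower of orbit--stabiliser fibrations shows that $\Symp_c(W)$ is weakly homotopy equivalent to the stabiliser of one holomorphic fibration, i.e. to the group $\mathcal{G}$ of \emph{fibred} symplectomorphisms preserving $\bar{\pi}$ and supported in $W$.

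The map to the braid group then comes from the base. A fibred symplectomorphism permutes the $n$ critical values and, being compactly supported, induces a compactly supported diffeomorphism of the base disc fixing a neighbourhood of infinity; passing to components gives a homomorphism $\mathcal{G}\to\pi_0(\Diff_c(\CC,n\text{ marked points}))=\Br_n$, which is the asserted map $\pi_0(\Symp_c(W))\to\Br_n$. The weak equivalence of $\Symp_c(W)$ with its group of components is obtained along the way: the base factor $\Diff_c(\CC,n)$ is homotopy-discrete by Earle--Eells/Teichm\"{u}ller theory, while the fibre direction is controlled by the classical fact that $\Symp_c$ of a cylinder is weakly homotopy equivalent to $\ZZ$, so the long exact sequences of the fibrations built from $\mathcal{G}$ leave only $\pi_0$.

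The hard part is twofold. The main obstacle is the weak contractibility of $\mC$: unlike the earlier cases the fibres are noncompact cylinders and, crucially, reducible over the critical values, so controlling the holomorphic foliation near the nodes (and its degenerations under Gromov compactness) is substantially more delicate than in Proposition \ref{delpcontr}. The second difficulty is injectivity of $\mathcal{G}\to\Br_n$: its kernel consists of fibred symplectomorphisms inducing the trivial braid, which one may assume preserve each fibre and so act as a family of cylinder symplectomorphisms; since the class in $\Symp_c(\CC^*)\simeq\ZZ$ is detected by a Dehn-twist count that is locally constant over the connected base minus critical values and vanishes near infinity, such a map has zero net twisting and is Hamiltonian isotopic to the identity. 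It is precisely the fine behaviour of this vertical monodromy near infinity that obstructs upgrading the injection to a surjection, which is why the theorem is stated only as an embedding into $\Br_n$.
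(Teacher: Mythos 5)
Your compactification and your identification of where the braid group comes from (the $n$ critical values of the conic bundle $(u,v,z)\mapsto z$, equivalently the $n$ points where the exceptional spheres meet the section at infinity) agree with the paper's, and the contractibility of a space of standard configurations via Gromov's theorem and Gompf isotopy is indeed the engine in both arguments. But the central reduction you rely on --- that $\Symp_c(W)$ is weakly homotopy equivalent to the group $\mathcal{G}$ of genuinely \emph{fibred} compactly supported symplectomorphisms --- is not ``the usual tower of orbit--stabiliser fibrations'' and is where your argument has a genuine gap. The paper's orbit--stabiliser tower only ever acts on the space of configurations of the $n$ exceptional spheres (a finite configuration), never on a space of whole foliations with nodal leaves; retracting onto the fibred subgroup is a much stronger statement. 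Note in particular that $\mathcal{G}\to\Br_n$ is surjective (half-twists are realised by parallel transport around paths swapping critical values, as in the Khovanov--Seidel construction), so your claimed equivalence $\Symp_c(W)\simeq\mathcal{G}$ would immediately give surjectivity of $\pi_0(\Symp_c(W))\to\Br_n$ --- precisely the statement the author says he is unable to prove for technical reasons and therefore omits from the theorem. Your injectivity step has the same flavour of difficulty: passing from ``zero net fibrewise twisting'' to ``Hamiltonian isotopic to the identity through compactly supported symplectomorphisms'' is a parametrised connectedness statement for a family of cylinder symplectomorphism groups degenerating over the nodal fibres, and nothing in your sketch controls that degeneration.

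The paper gets both halves of the theorem more softly. It shows that the space $\mC_0$ of standard configurations of the $n$ exceptional spheres is weakly contractible and that recording the intersection points with the section $C_{n+2}$ gives a fibration $\Psi:\mC_0\to\Conf(n)$ onto a $K(\Br_n,1)$; hence the fibre $\mF$ is homotopy-discrete and carries a free transitive $\Br_n$-action on $\pi_0(\mF)$. The group $\Symp_c(W)$ acts on $\mF$ with weakly contractible stabiliser (reducing to $\Symp_c(\CC^*\times\CC)\simeq\star$), which kills the higher homotopy groups, and $\pi_0(\Symp_c(W))$ acts freely on the components of the orbit of $\{C_1,\ldots,C_n\}$. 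Since the $\Br_n$-action is realised by symplectomorphisms supported near $C_{n+2}$ while $\Symp_c(W)$ acts by maps supported away from it, the two actions commute, and an elementary lemma about commuting free actions then produces the injective homomorphism $\pi_0(\Symp_c(W))\to\Br_n$ with no fibrewise analysis at all. If you want to salvage your route, you would need to prove the weak equivalence with the fibred group honestly --- at which point you would have established the stronger isomorphism statement, not merely the embedding.
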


\subsection{Compactification}

We begin by
compactifying $W$ to a projective rational surface. Let $\xi_k=\exp(2\pi i k/n)$ and denote by $P_k(c)$ the polynomial $\frac{c^n-1}{c-\xi_k}$. Define
\begin{itemize}
\item $X$ to be the blow-up of $\PP{2}$ at the points
$\left\{[\xi_k:0:1]\right\}_{k=1}^{n}$. This can be thought of as the
subvariety of $M=\PP{2}\times\prod_{k=1}^{n}\PP{1}_k$ given by the equations
\[a_ky=b_k(x-\xi_kz)\mbox{ for }k=1,\ldots,n\]
in coordinates
\[\left([x:y:z],[a_1:b_1],\ldots,[a_{n}:b_{n}]\right)\mbox{ on }M.\]
\item the pencil of curves $P_t$ which are proper transforms of the pencil of lines through $[0:1:0]$, parametrised by $t=[x:z]$. In particular, notice that
\[P_{\infty}=\left\{\left([x:y:0],[x:y],\ldots,[x:y]\right)\right\}\]
\item the curve $C_{n+1}=P_{\infty}$.
\item the curve $C_{n+2}=\{[x:0:z]\}\times\prod_{k=1}^{n}\{[1:0]\}$ which is the proper transform of the line through the blow-up points. This is a section of the pencil $P_t$.
\item the exceptional spheres $C_k$ ($1\leq k\leq n$) of the blow-up. $C_k$ is given in equations by
\[\left\{[\xi_k:0:1]\right\}\times\{[1:0]\}\times\ldots\times\PP{1}_k\times\ldots\times\left\{[1:0]\right\}\]
and constitutes one component of the singular curve $P_{\xi_k}$.
\item the divisor $C'=C_{n+1}\cup C_{n+2}$ and its complement $U'=X\setminus C'$.
\item the divisor $C=C'\cup\bigcup_{i=1}^nC_k$ and its
complement $U=X\setminus C$.
\end{itemize}
\[
\xy
(0,0)*{};(40,0)*{} **\dir{-};
(46,0)*{C_{n+2}};
(5,5)*{};(5,-15)*{} **\dir{-};(5,8)*{C_1};
(20,5)*{};(20,-15)*{} **\dir{-};(20,8)*{\cdots};
(35,5)*{};(35,-15)*{} **\dir{-};(35,8)*{C_n};
(10,-5)*{};(0,-15)*{} **\dir{-};
(25,-5)*{};(15,-15)*{} **\dir{-};
(40,-5)*{};(30,-15)*{} **\dir{-};(-5,-10)*{X};
(13,5)*{};(7,-15)*{} **\crv{(12,-10)};
(28,5)*{};(22,-15)*{} **\crv{(27,-10)};
(0,-18)*{};(40,-18)*{} **\dir{-};
(42,-20)*{P_t};
(0,-18)*{};(0,-16)*{} **\dir{-};
(40,-18)*{};(40,-16)*{} **\dir{-};
(20,-25)*{\downarrow};(30,-25)*{\mbox{blow-down}};
(0,-35)*{};(40,-35)*{} **\dir{-};
(10,-30)*{};(0,-40)*{} **\dir{-};(5,-35)*{\bullet};
(25,-30)*{};(15,-40)*{} **\dir{-};(20,-35)*{\bullet};
(40,-30)*{};(30,-40)*{} **\dir{-};(35,-35)*{\bullet};(42,-45)*{\PP{2}};
(17,-30)*{};(7,-40)*{} **\dir{-};
(32,-30)*{};(22,-40)*{} **\dir{-};
\endxy
\]
\begin{lma}\label{U}
$U'$ is biholomorphic to $W$ and $U$ is biholomorphic to $\CC^*\times\CC$.
\end{lma}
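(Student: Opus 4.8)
The two assertions are essentially independent, and I would prove them by blowing down to $\PP{2}$. For $U$ the computation is elementary: the blow-down $\pi:X\to\PP{2}$ restricts to a biholomorphism $X\setminus\bigcup_{k=1}^nC_k\xrightarrow{\sim}\PP{2}\setminus\{[\xi_k:0:1]\}$, and under it $C_{n+1},C_{n+2}$ correspond to the lines $\{z=0\}$ and $\{y=0\}$. Since every blown-up point lies on $\{y=0\}$, deleting $C=C'\cup\bigcup_kC_k$ is the same as deleting these two lines from $\PP{2}$; passing to the chart $z=1$ gives $U\cong\CC^2_{x,y}\setminus\{y=0\}=\CC_x\times\CC^*_y\cong\CC^*\times\CC$.

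For $U'$ the plan is to produce the biholomorphism with $W$ by hand. Because $\{z=0\}$ contains no blown-up point, $X\setminus C_{n+1}$ is the blow-up of $\CC^2_{x,y}$ at the points $(\xi_k,0)$, so that
\[U'=\mathrm{Bl}_{\{(\xi_k,0)\}}\CC^2_{x,y}\ \setminus\ \widetilde{\{y=0\}},\]
where $\widetilde{\{y=0\}}$ is the proper transform of the line through all the centres. Writing the Milnor fibre as $W\cong\{(\mu,\nu,c):\mu\nu=1-c^n\}$ via $\mu=a+ib$, $\nu=a-ib$ (so $\mu\nu=a^2+b^2=1-c^n$), I would define $\Phi$ on the dense open piece $U=\CC_x\times\CC^*_y$ by
\[\Phi(x,y)=\Bigl(y,\ \tfrac{1-x^n}{y},\ x\Bigr)=(\mu,\nu,c).\]
This is holomorphic (as $y\neq0$ on $U$), lands in $W$ since $\mu\nu=1-x^n=1-c^n$, and is a biholomorphism of $U$ onto $\{\mu\neq0\}\subset W$ with inverse $(\mu,\nu,c)\mapsto(x,y)=(c,\mu)$.

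The substance of the proof is checking that $\Phi$ extends holomorphically and bijectively across each exceptional curve $C_k$. Here I would use that $1-x^n=-\prod_{k=1}^n(x-\xi_k)$ vanishes to first order at each centre: in the blow-up chart $x-\xi_k=sy$ with coordinates $(s,y)$ — in which $C_k=\{y=0\}$ and $\widetilde{\{y=0\}}$ does not appear — the meromorphic function $\nu=(1-x^n)/y$ becomes the honest polynomial $\nu=\bigl(1-(\xi_k+sy)^n\bigr)/y$, restricting on $C_k$ to $\nu=-n\xi_k^{n-1}s$. Thus $\Phi$ extends to a holomorphic map $U'\to W$ sending $C_k$ onto the line $\{\mu=0,\,c=\xi_k\}$. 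A count then shows $\Phi$ is a bijection: $U$ covers $\{\mu\neq0\}$ while the $n$ curves $C_k$ cover $\{\mu=0\}=\bigcup_k\{\mu=0,\,c=\xi_k\}$, each centre being hit exactly once. Since a holomorphic bijection between smooth complex surfaces is automatically biholomorphic, $U'\cong W$.

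The one place requiring genuine care — the main obstacle — is the behaviour of $\Phi$ at the \emph{node} of each singular fibre, i.e.\ the point $s=0$ of $C_k$ mapping to $(\mu,\nu,c)=(0,0,\xi_k)$, where $C_k$ meets the proper transform of the fibre line $\{x=\xi_k\}$. At this point the projection $(\mu,c)$ is \emph{not} a local coordinate on $W$, so one must instead use $(\mu,\nu)$ (valid because $\partial(\mu\nu-1+c^n)/\partial c=nc^{n-1}\neq0$ at $c=\xi_k$); in these coordinates the Jacobian of $\Phi$ at the node is $\left(\begin{smallmatrix}0&1\\-n\xi_k^{n-1}&0\end{smallmatrix}\right)$, which is invertible, while away from the node one checks local invertibility in the coordinates $(\nu,c)$. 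Confirming these two local pictures patch over all of each $C_k$ is the only delicate step; anyone content to invoke the general fact that a holomorphic bijection of equidimensional complex manifolds is biholomorphic may skip the Jacobian computation entirely.
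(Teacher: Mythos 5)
Your proof is correct and is essentially the paper's argument run in the opposite direction: the paper glues explicit partial maps $W\bir U'$ over the product charts of $M$, using the very same identification $x=c$, $y=a+ib=\mu$ and the same polynomial $P_k(c)=\frac{c^n-1}{c-\xi_k}=\prod_{j\neq k}(c-\xi_j)$ that governs your extension of $\nu=(1-x^n)/y$ across $C_k$, whereas you write the inverse $U'\to W$ in blow-up charts and then invoke the standard fact that a holomorphic bijection of equidimensional complex manifolds is biholomorphic. The identification of $U$ with $\CC^*\times\CC$ is likewise the same observation the paper makes in its final paragraph.
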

\begin{proof}
We proceed by introducing affine charts on $U'$ and writing down explicit
partial maps $W\bir U'$ which glue compatibly to give a biholomorphism.

Let $\CC_+$, $\CC_-$ denote the upper and lower (Zariski) hemispheres of $\PP{1}$
respectively. For any partition $\sigma$ of $\{1,\ldots,n\}$ into two sets
$\sigma_+$ and $\sigma_-$ let $(\pm)^{\sigma}(k)$ be $\pm 1$ if
$k\in\sigma_{\pm}$ respectively and consider the affine open set
\[M_{\sigma}=\CC^2\times\prod_{k=1}^{n}\CC_{(\pm)^{\sigma}(k)}\subset M\]
\noindent Denote by $U'_{\sigma}$ the intersection $U'\cap M_{\sigma}$. Notice
that unless $|\sigma_-|\leq 1$, $y$ cannot vanish, so
\[\bigcup_{|\sigma_-|\geq 2}U'_{\sigma}=\bigcap_{|\sigma_-|\geq
2}U'_{\sigma}=\left\{\left((x,y),[x-\xi_1:y],\ldots,[x-\xi_n:y]\right):y\neq
0\right\}\]
\noindent We define $\sigma^k$ to be the partition with $\sigma_-=\{k\}$. Since
we have excised $C_{n+2}$, we do not need to consider the partition
$\sigma^{\infty}$ with $\sigma_-=\emptyset$.

Recall that $W=\{(a,b,c)\in\CC^3 : a^2+b^2+c^n=1\}$. For each $\sigma$ we
define a partial map $\psi_{\sigma}:W\bir U'_{\sigma}$ (i.e. a map whose domain
is implicitly defined as the region on which the map is well-defined) by
\[\psi_{\sigma}(a,b,c)=\left((c,a+ib),[c-\xi_1:a+ib],\ldots,[c-\xi_n:a+ib]\right)\]
\noindent for $|\sigma_-|\geq 2$ and
\[\psi_{\sigma^k}(a,b,c)=\left((c,a+ib),[c-\xi_1:a+ib],\ldots,[a-ib:P_k(c)],\ldots[c-\xi_n:a+ib]\right)\]

\noindent Since $a^2+b^2=(a+ib)(a-ib)=1-c^n$ and $P_k(c)(c-\xi_k)=c^n-1$, the
standard $a_i\mapsto 1/a_i$ transition maps for the affine charts on $\PP{1}$
allow us to glue these partial maps into a global biholomorphism $W\rightarrow
U'$.

To identify $U$ and $\CC^*\times\CC$, notice that $U$ is just the subset
\[\left\{\left((x,y),[x-\xi:y],\ldots,[x-\xi^{n+1}:y]\right):y\neq 0\right\}\]
from above.
\end{proof}

We now construct a symplectic form on the compactification $X$. Let $\Omega$
denote the symplectic form on $M=\PP{2}\times\prod_{k=1}^n\PP{1}_k$ which is
the product of the Fubini-Study forms on each factor, normalised to give a
complex line area 1. $X$ inherits a symplectic form $\omega$ from its embedding
in $M$ and it is easy to see that
\begin{eqnarray*}
[C_{n+2}] & = & [C_{n+1}]-\sum_{k=1}^n[C_k] \\
\omega(C_{n+1}) & = & n+1 \\
\omega(C_{n+2}) & = & 1 \\
\omega(C_k) & = & 1 \\
P.D.[\omega] & = & n[C_{n+1}] +[C_{n+2}] \\
c_1(X) & = & 3[C_{n+1}]-\sum_{k=1}^n[C_k]
\end{eqnarray*}
Moreover, the following observation will prove extremely useful:

\begin{lma}\label{symporth}
At the intersection points $C_{n+2}\cap P_{\infty}$ and $C_{n+2}\cap C_k$, the various intersecting components are all $\omega$-symplectically orthogonal. Indeed $C_{n+2}$ is symplectically orthogonal to all the members of the pencil $P_t$.
\end{lma}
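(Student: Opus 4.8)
The statement to prove is Lemma \ref{symporth}: that at the intersection points $C_{n+2}\cap P_\infty$ and $C_{n+2}\cap C_k$, the intersecting components are $\omega$-symplectically orthogonal, and more strongly that $C_{n+2}$ is $\omega$-orthogonal to every member of the pencil $P_t$. The symplectic form $\omega$ is inherited from the product form $\Omega$ on $M=\PP{2}\times\prod_{k=1}^n\PP{1}_k$, which is a product of Fubini--Study forms. The natural strategy is to exploit a symmetry of this product form that preserves $C_{n+2}$ and acts on the normal directions in a way forcing orthogonality.

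**Approach via symmetry.** The plan is to find an isometry of $(M,\Omega)$ (for the associated Kähler metric) that fixes $C_{n+2}$ pointwise and acts on the fibres of the pencil by a nontrivial rotation. Concretely, $C_{n+2}$ is the proper transform of the line $y=0$, i.e.\ the locus $\{[x:0:z]\}\times\prod_k\{[1:0]\}$, and the pencil $P_t$ consists of proper transforms of the lines through $[0:1:0]$. The complex conjugation / antiholomorphic involution, or better a suitably chosen holomorphic torus action, can be arranged so that $C_{n+2}$ is fixed while each curve $P_t$ is rotated in its normal direction. Since any isometry of a Kähler manifold that fixes a submanifold pointwise acts on its normal bundle by orthogonal transformations, and since orthogonality for the metric is equivalent to $\omega$-orthogonality (the metric being $\omega(\cdot, J\cdot)$ with $J$ the integrable structure), the fixed-point analysis will pin down the tangent planes. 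The cleanest version is: exhibit a circle action fixing $C_{n+2}$ pointwise and rotating the fibre direction of the pencil; then at each intersection point $T_pC_{n+2}$ lies in the fixed subspace and $T_p(\text{fibre})$ lies in a nontrivial weight space, and distinct weight spaces of a Hamiltonian (hence $\omega$-preserving, isometric) circle action are automatically $\omega$-orthogonal.

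**Key steps, in order.** First I would pass to the explicit coordinates given in the excerpt and identify $C_{n+2}$ as the locus $\{y=0\}$ together with $[a_k:b_k]=[1:0]$ for all $k$, and identify the tangent space to $C_{n+2}$ at an intersection point as the span of $\partial_x$ (the $x$-direction in $\PP{2}$). Second, I would identify the normal/fibre direction of the pencil $P_t$: the pencil is parametrised by $t=[x:z]$ and its fibre direction is the $y$-direction (moving along a line through $[0:1:0]$), together with the compatible motion $\partial_{b_k}$ in the $\PP{1}_k$ factors dictated by the defining equations $a_ky=b_k(x-\xi_k z)$. Third, I would invoke the product structure: the Fubini--Study form on $\PP{2}$ makes the coordinate directions $\partial_x$ and $\partial_y$ at a point of $\{y=0\}$ orthogonal (standard at a point where one homogeneous coordinate vanishes), and similarly each $\PP{1}_k$ factor contributes orthogonally because $\Omega$ is a product. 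Summing the contributions factor-by-factor shows $\omega(T_pC_{n+2}, T_pP_t)=0$.

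**Main obstacle.** The genuine difficulty is step three: verifying orthogonality is not a single computation in $\PP{2}$ but must account for all $n$ blow-up factors $\PP{1}_k$ simultaneously, because the tangent vector to $P_t$ has components in every factor (through the relations $a_ky=b_k(x-\xi_k z)$). The product form $\Omega$ splits as a sum over factors, so $\omega(v,w)=\sum_k \Omega_k(v_k,w_k)$ where $v,w$ are the projected tangent vectors; the subtlety is confirming that \emph{each} summand vanishes at the relevant point. At $C_{n+2}\cap C_k$, the tangent to $C_{n+2}$ has zero component in the fibre $\PP{1}_k$ direction (since $C_{n+2}$ sits at $[1:0]$ in every $\PP{1}_j$), while the tangent to $C_k$ is purely in the $\PP{1}_k$ direction, so the $\PP{2}$ and the $\PP{1}_{j\neq k}$ summands vanish for trivial reasons and only the single $\PP{1}_k$ factor must be checked — and there orthogonality is the standard fact that at the point $[1:0]\in\PP{1}$ the real and imaginary parts of the affine coordinate are Fubini--Study orthogonal. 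The cleaner symmetry argument above sidesteps this bookkeeping, but whichever route is taken, the heart of the matter is organising the computation so the product structure does the work and confirming that the defining equations of $X$ put the relevant tangent vectors into complementary factors.
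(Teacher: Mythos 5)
Your proposal is correct, and it is essentially the verification the paper omits: the paper's entire ``proof'' of this lemma is the sentence ``This can be checked by hand,'' and your factor-by-factor decomposition of the product form $\Omega$ on $\PP{2}\times\prod_k\PP{1}_k$ (or, alternatively, the averaging/weight-space argument for the circle action $y\mapsto e^{i\theta}y$, $b_k\mapsto e^{i\theta}b_k$, which fixes $C_{n+2}$ pointwise and preserves each $P_t$) is exactly that check. One small slip in your bookkeeping: at $C_{n+2}\cap C_k$ the $\PP{1}_k$ summand does not require the Fubini--Study fact you cite --- it vanishes for the same trivial reason as all the others, since the tangent to $C_{n+2}$ has zero component in every $\PP{1}_j$ factor while the tangent to $C_k$ has zero component everywhere else; the only place a genuine (though standard) Fubini--Study computation is needed is the $\PP{2}$ summand at $C_{n+2}\cap P_t$, where one checks that $\CC\partial_x$ and $\CC\partial_y$ are orthogonal at a point of $\{y=0\}$ in an affine chart.
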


This can be checked by hand. We also notice that since $U$ is the intersection of an affine open subset of $M$ with $X$ that $\omega|_{U}$ arises from a plurisubharmonic function. The divisor $nC_{n+1}+C_{n+2}$ satisfies the Nakai-Moishezon criterion for ampleness, so by the first proposition in \cite{GrHa}, chapter 1, section 2 there is also a plurisubharmonic function $\phi_{C'}$ on $U'$ for which
$-dd^c\phi_{C'}=\omega|_{U'}$. By Proposition \ref{twostein}, this implies:

\begin{lma}
$\Symp_c(U)\simeq\Symp_c(\CC^*\times\CC)$ and $\Symp_c(U')\simeq\Symp_c(W)$.
\end{lma}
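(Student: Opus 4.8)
The plan is to invoke Proposition~\ref{twostein} directly, using the biholomorphisms of Lemma~\ref{U} to present each of the two weak equivalences as a comparison of two finite-type Stein structures on a single complex manifold. The proposition was tailored for exactly this use: it lets us replace the ambient symplectic form on a divisor complement by any other potential, provided we can recognise the complement up to biholomorphism.

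I would begin with $U$. By Lemma~\ref{U} there is a biholomorphism $g:U\to\CC^*\times\CC$, and $\CC^*\times\CC$ carries a finite-type plurisubharmonic exhaustion $\psi$ whose associated form $-dd^c\psi$ is the standard product symplectic form (finite-type here because $\CC^*\times\CC$ is affine, so the exhaustion has only finitely many critical points, all in a compact set). Pulling back along $g$ gives a finite-type Stein structure $g^*\psi$ on the complex manifold $(U,J)$, for which $g$ is by construction a symplectomorphism onto $(\CC^*\times\CC,-dd^c\psi)$. On the other hand, the remark preceding the statement supplies a plurisubharmonic $h$ with $\omega|_U=-dd^c h$, again of finite type since $U$ is the complement of an ample divisor in $X$. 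Applying Proposition~\ref{twostein} to the two finite-type Stein structures $h$ and $g^*\psi$ on $(U,J)$ gives
\[\Symp_c(U)=\Symp_c(U,-dd^c h)\simeq\Symp_c(U,-dd^c(g^*\psi)),\]
and conjugation by $g$ identifies the right-hand group with $\Symp_c(\CC^*\times\CC)$.

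The case of $U'$ is formally identical. Lemma~\ref{U} gives a biholomorphism $U'\to W$; the K\"ahler form on $W$ induced from $\CC^3$ is $-dd^c$ of (a multiple of) the restriction of $|\cdot|^2$, which is a proper, finite-type plurisubharmonic exhaustion of the Milnor fibre; and $\omega|_{U'}=-dd^c\phi_{C'}$ is a second finite-type Stein structure, with $\phi_{C'}$ the potential built from the ample divisor $C'$ by Nakai--Moishezon. Proposition~\ref{twostein} together with the transported symplectomorphism then yields $\Symp_c(U')\simeq\Symp_c(W)$.

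The one point that genuinely requires justification, rather than bookkeeping, is the finite-type hypothesis of Proposition~\ref{twostein}: one must check that the restricted ambient potentials $h$ and $\phi_{C'}$, and the standard potentials on $\CC^*\times\CC$ and $W$, all define \emph{finite-type} Stein structures, meaning their critical points are confined to a compact region and the Liouville field points outward along all large level sets. This is where I expect the work to lie, but it is not deep: each of the four manifolds is affine (equivalently, the complement of an ample divisor), and for such manifolds the natural exhaustion coming from a projective or affine embedding is automatically of finite type, since by genericity it is Morse with finitely many critical points. Once finite-typeness is in hand, both equivalences are immediate consequences of Proposition~\ref{twostein} and the fact that a biholomorphism carrying one symplectic form to another induces an isomorphism of compactly supported symplectomorphism groups.
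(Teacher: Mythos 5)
Your proposal is correct and follows exactly the paper's own route: the paper likewise identifies $U\cong\CC^*\times\CC$ and $U'\cong W$ biholomorphically via Lemma~\ref{U}, observes that $\omega|_U$ and $\omega|_{U'}$ are of the form $-dd^c$ of plurisubharmonic potentials (the latter via the Nakai--Moishezon argument producing $\phi_{C'}$), and then cites Proposition~\ref{twostein} to compare these with the standard potentials on the affine models. Your extra attention to the finite-type hypothesis is a refinement of, not a departure from, the paper's argument, which leaves that point implicit.
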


\subsection{Proof of Theorem \ref{milnorsymp}}

We now proceed to the proof of Theorem \ref{milnorsymp}. As before we introduce the notion of a standard configuration:

\begin{dfn}
A \emph{standard configuration} in $X$ will mean an \emph{unordered} $n$-tuple of embedded symplectic spheres $\{S_i\}_{i=1}^n$ satisfying the following properties:
\begin{itemize}
\item each $S_i$ is disjoint from $C_{n+1}$,
\item $[S_i]=[C_i]$,
\item there exists a $J\in\mJ$ for which all the spheres $S_i$, $C_{n+1}$ and $C_{n+2}$ are $J$-holomorphic.
\item there is a neighbourhood $\nu$ of $C_{n+2}$ such that, for all $i\in\{1,\ldots,n\}$,
\[S_i\cap\nu=P_{t_i}\cap\nu\]
where $t_i=S_i\cap C_{n+2}$.
\end{itemize}
Let $\mC_0$ denote the space of all standard configurations.
\end{dfn}

Note that we really do want unordered configurations here in order to obtain the full (rather than the pure) braid group.

\begin{prp}\label{milnorcontr}
$\mC_0$ is weakly contractible.
\end{prp}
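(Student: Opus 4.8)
The plan is to follow the proof of proposition~\ref{cstarccontr} almost verbatim, with the single holomorphic sphere there replaced by the $n$-tuple $\{S_i\}$ and the rigid sphere $C_3$ replaced by the divisor $C'=C_{n+1}\cup C_{n+2}$. First I would introduce a space $\mC$ of \emph{nonstandard configurations}: unordered $n$-tuples $\{S_i\}$ of embedded symplectic spheres with $[S_i]=[C_i]$, disjoint from $C_{n+1}$, each meeting $C_{n+2}$ transversely in a single point, and all simultaneously $J$-holomorphic together with $C_{n+1}$ and $C_{n+2}$ for some $J\in\mJ$. This drops only the last bullet in the definition of $\mC_0$ (that each $S_i$ coincide with the pencil member $P_{t_i}$ near $C_{n+2}$), so there is an inclusion $\iota:\mC_0\hookrightarrow\mC$. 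The three inputs are Gromov's existence and uniqueness theorem for $-1$-spheres (theorem~\ref{grexist}), the weak contractibility of spaces of compatible almost complex structures making a fixed orthogonally-intersecting configuration holomorphic (lemma~\ref{contrH}), and the symplectic orthogonality supplied by lemma~\ref{symporth}.

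Let $\mH(C')$ denote the space of $\omega$-compatible $J$ for which $C_{n+1}$ and $C_{n+2}$ are both $J$-holomorphic. Since $C_{n+1}=P_\infty$ and $C_{n+2}$ meet orthogonally (lemma~\ref{symporth}), lemma~\ref{contrH} shows $\mH(C')$ is weakly contractible. For each $J\in\mH(C')$ theorem~\ref{grexist} produces, for every $i$, a unique embedded $J$-holomorphic sphere $S_i$ in the class $[C_i]$; positivity of intersections together with the homological identities $[C_i]\cdot[C_{n+1}]=0$, $[C_i]\cdot[C_j]=0$ ($i\ne j$) and $[C_i]\cdot[C_{n+2}]=1$ forces the $S_i$ to be mutually disjoint, disjoint from $C_{n+1}$, and to meet $C_{n+2}$ transversely once. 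Hence $\{S_i\}\in\mC$, and because the classes $[C_i]$ are pairwise distinct the spheres are canonically labelled, so this assignment is a well-defined continuous map $G:\mH(C')\to\mC$ into the unordered configuration space.

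Next I would prove that $\iota$ is a weak homotopy equivalence exactly as in lemma~\ref{gompfy}, applying the Gompf isotopy of lemma~\ref{gompfiso} simultaneously at the $n$ intersection points. The points $t_i=S_i\cap C_{n+2}$ are pairwise distinct (the $S_i$ are disjoint), so on any compact family of configurations one can choose uniformly small, pairwise disjoint neighbourhoods of the $t_i$ inside a Weinstein neighbourhood of $C_{n+2}$ and run independent Gompf isotopies in each; as before this preserves $\mC$, carries $\mC$ into $\mC_0$, and depends continuously on the configuration, giving injectivity and surjectivity of $\iota_*$ on all homotopy groups. Finally, to see $\mC_0$ is weakly contractible, I would run the lifting argument from the proof of proposition~\ref{cstarccontr}: given $f:(S^m,\star)\to(\mC_0,C)$, pass to the oriented real blow-up $\overline{B}^m$ of $S^m$ at the antipode, lift the Banyaga-extended isotopies along great half-circles to a map $\tilde f_1:\overline{B}^m\to\mH(C')$ covering $f$ under $G$ (this uses orthogonality of the standard configuration with $C_{n+2}$, which the pencil condition guarantees, so that lemma~\ref{contrH} applies to each whole configuration $\{S_i\}\cup C'$), cap off the boundary sphere using contractibility of $\mH(f(\infty))$, glue to $\tilde f:S^m\to\mH(C')$ with $G\circ\tilde f\simeq f$, and nullhomotope using weak contractibility of $\mH(C')$. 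Lemma~\ref{gompfy} then transports the nullhomotopy from $\mC$ back to $\mC_0$.

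I expect the main obstacle to be the Gompf step, as already flagged in the analogous lemma~\ref{gompfy}: one must run $n$ Gompf isotopies at once and check that the auxiliary parameters and cut-offs can be chosen to depend continuously on the \emph{unordered} configuration and uniformly over a compact parameter space, so that the intersection points never collide and the chosen neighbourhoods stay disjoint throughout the homotopy. The genuinely hard analytic input, the weak contractibility of the structure spaces $\mH(C')$ and $\mH(f(\infty))$, is isolated in lemma~\ref{contrH} (proved in the appendix), and the orthogonality hypothesis it requires is exactly what the pencil condition in the definition of $\mC_0$ together with lemma~\ref{symporth} provide.
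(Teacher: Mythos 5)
Your proposal is correct and follows the same route the paper intends: the paper's own proof is a one-line reference to the argument for proposition \ref{cstarccontr} with theorem \ref{grexist} as the Gromov-theoretic input, and your expansion (nonstandard configurations, simultaneous Gompf isotopies at the $n$ points of $C_{n+2}\cap S$, the map $G:\mH(C')\to\mC$ via uniqueness of holomorphic $-1$-spheres, and the blow-up/lifting argument using lemma \ref{contrH} and lemma \ref{symporth}) is exactly that argument. The homological intersection computations you use to show $G$ lands in $\mC$ are also correct.
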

\begin{proof}
The proof of this is similar to the proof of Proposition \ref{cstarccontr}. The input from Gromov's theory of pseudoholomorphic curves is Lemma \ref{grexist}, mentioned in the proof of Proposition \ref{delpcontr}.
\end{proof}

Let $\Conf(n)$ denote the space of configurations of unordered points in
$C_{n+2}\setminus\{\infty\}$, where $\infty$ denotes the point of intersection between $C_{n+2}$ and $P_{\infty}$. Define the map
\begin{align*}
\Psi:\mC_0&\rightarrow\Conf(n) \\
S=\bigcup_{i=1}^nS_i &\mapsto\{S_1\cap C_{n+2},\ldots,S_n\cap C_{n+2}\}.
\end{align*}

\begin{prp}\label{psifib}
$\Psi$ is a fibration.
\end{prp}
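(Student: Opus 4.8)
The plan is to show $\Psi$ is a fibration by exhibiting the local triviality directly, using the symplectic isotopy extension theorem to build lifts of paths in $\Conf(n)$. First I would recall that the base $\Conf(n)$ is a manifold (an open subset of the unordered configuration space of $C_{n+2}\setminus\{\infty\}\cong\CC$), so it suffices to verify the homotopy lifting property against disks, or equivalently to produce local sections of $\Psi$ together with a way of transporting fibres. The cleanest route is to prove that $\Psi$ is a \emph{locally trivial fibre bundle}: over a small ball $B\subset\Conf(n)$ I would construct a trivialisation $\Psi^{-1}(B)\cong B\times\Psi^{-1}(p)$.

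To construct the trivialisation, fix a basepoint $p=\{t_1^0,\ldots,t_n^0\}\in B$. Any nearby configuration of points $q=\{t_1,\ldots,t_n\}$ can be joined to $p$ by a canonical short path in $\Conf(n)$ (e.g.\ the straight-line path in the chart on $\CC$, which stays within $\Conf(n)$ if $B$ is small enough), and this varies smoothly with $q$. For each such path of point-configurations I would build an ambient symplectic isotopy $\Theta_q$ of $X$ that drags the points $t_i^0$ to $t_i$ along $C_{n+2}$: starting from an isotopy of $C_{n+2}$ moving the marked points (using that $\Symp(C_{n+2},k)$ acts transitively, as in section \ref{sympC}), extend it to a neighbourhood of $C_{n+2}$ by the symplectic neighbourhood theorem, arrange it to preserve the fibres of the pencil $P_t$ near $C_{n+2}$ (possible because $C_{n+2}$ is symplectically orthogonal to every member of the pencil, lemma \ref{symporth}), and finally extend to all of $X$ by Banyaga's symplectic isotopy extension theorem, keeping it supported away from $C_{n+1}$. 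The resulting $\Theta_q$ sends a standard configuration over $p$ to one over $q$: the homology classes and disjointness from $C'$ are preserved since $\Theta_q$ is a symplectomorphism isotopic to the identity, the simultaneous-holomorphicity condition is preserved by pushing forward the witnessing $J$, and the condition that $S_i$ agrees with $P_{t_i}$ near $C_{n+2}$ is preserved precisely because $\Theta_q$ was chosen to permute the fibres of the pencil near $C_{n+2}$ in the prescribed way. The map $(q,S)\mapsto\Theta_q(S)$ and its inverse $(\Theta_q)^{-1}$ then give the desired trivialisation over $B$.

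The main obstacle I anticipate is the fourth bullet in the definition of standard configuration: the requirement that $S_i$ coincide with the pencil member $P_{t_i}$ in a fixed neighbourhood $\nu$ of $C_{n+2}$. This is exactly why the trivialising isotopy cannot be arbitrary but must be engineered to carry pencil fibres to pencil fibres near $C_{n+2}$; the symplectic orthogonality of $C_{n+2}$ to the pencil (lemma \ref{symporth}) is what makes the normal-bundle model compatible with such a fibre-preserving isotopy, so I would spend most of the care verifying that $\Theta_q$ can simultaneously (i) realise the prescribed motion of the $n$ points, (ii) respect the pencil structure in $\nu$, and (iii) depend smoothly on $q$. A secondary technical point is ensuring the chosen neighbourhood $\nu$ and the supports can be taken uniform over the small ball $B$, which is routine once $B$ is chosen small. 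I would remark that since $\mC_0$ is weakly contractible (proposition \ref{milnorcontr}), local triviality immediately yields the long exact sequence relating $\pi_*(\Conf(n))$ to the fibre $\Psi^{-1}(p)$, which is the use to which this fibration will subsequently be put.
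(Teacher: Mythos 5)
Your overall strategy is sound and is, at its core, the same as the paper's: both arguments produce ambient symplectomorphisms, supported near $C_{n+2}$ and fixing $C_{n+1}$, which realise a prescribed motion of the $n$ marked points while carrying members of the pencil $P_t$ near $C_{n+2}$ to other members (this is where lemma \ref{symporth} enters in both cases), and hence carry standard configurations to standard configurations. The packaging differs: the paper verifies the homotopy lifting property directly against an arbitrary test space, building the lifting symplectomorphisms $\psi_{y,t}$ as flows of cut-off Hamiltonians $\eta H_{y,t}$ that follow the velocity of the given path in $\Conf(n)$ (at the cost of a preliminary smoothing of the test homotopy in the $I$-direction), whereas you establish local triviality over small balls using the canonical straight-line paths. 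Your version avoids the smoothing step and only ever needs to move points a short distance, which is a genuine simplification; in exchange you must check that $q\mapsto\Theta_q$ is continuous and that $(q,S)\mapsto\Theta_q(S)$ and its inverse really do give mutually inverse continuous trivialisations, which is comparable in effort to the paper's parametric construction.

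The one step that does not work as written is the appeal to Banyaga's symplectic isotopy extension theorem to pass from the neighbourhood of $C_{n+2}$ to all of $X$. That theorem, as used elsewhere in this paper, requires $H^2(X,Q;\RR)=0$ for the subset $Q$ whose neighbourhood is being isotoped; here $Q=C_{n+2}$ (or $C'=C_{n+1}\cup C_{n+2}$), and $H^2(X,C_{n+2};\RR)$ has rank $n$ (respectively $H^2(X,C';\RR)$ has rank $n-1$), so the hypothesis fails. This is why the other transitivity arguments in the paper always apply Banyaga to the \emph{full} configuration $S_t\cup C'$, for which the relative $H^2$ does vanish. Fortunately the extension theorem is not needed at all at this point: your isotopy of $C_{n+2}$ is Hamiltonian ($C_{n+2}\cong S^2$ is simply connected), its lift to a tubular neighbourhood is generated by the pullback of that Hamiltonian along the pencil projection, and multiplying by a radial cut-off function produces a globally defined, compactly supported Hamiltonian whose flow is the desired $\Theta_q$ --- exactly the device the paper uses with $\eta H_{y,t}$. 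With that substitution your argument goes through.
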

\begin{proof}
Let $Y$ be a test space and
\[
\begin{CD}
Y @>f>> \mC_0 \\
@VVV @VV{\Psi}V \\
Y\times I @>>h> \Conf(n)
\end{CD}
\]
\noindent be a commutative test diagram for the homotopy lifting property which defines a fibration. We must show that $h$ lifts to a map $\tilde{h}:Y\times I\rightarrow\mC_0$ and which commutes with the other maps.

Let us first notice that $h$ can be arbitrarily closely approximated by maps which are smooth in the $I$-direction (for example, by Theorem 2.6 of \cite{Hir76}, chapter 2). So if we can prove homotopy lifting for such maps then by taking limits of approximation sequences we have proved it for all maps. Let us therefore assume that $h$ is smooth in the $I$-direction.

For clarity of exposition let us also assume that $n=1$ and explain how the proof should be modified for $n>1$ at the end.

The idea behind constructing $\tilde{h}$ will be to find symplectomorphisms $\psi_{y,t}:X\rightarrow X$ such that:
\begin{itemize}
\item $\psi_{y,t}$ preserves $C'=C_{n+1}\cup C_{n+2}$ and fixes $C_{n+1}$ pointwise,
\item $\psi_{y,t}(S)\in\mC_0$ for any $S\in\mC_0$,
\item $\Psi(\psi_{y,t}(f(y)))=h(y,t)$.
\end{itemize}
The lift $\tilde{h}$ will then be defined by
\[\tilde{h}(y,t)=\psi_{y,t}(f(y))\]
For example, if $Y=\{y\}$ this is just path-lifting:
\[
\xy
(0,0)*\xycircle(20,7){-};(20,7)*{C_{n+2}};
(-10,0)*{\bullet};(10,0)*{\bullet} **\crv{~*=<2pt>{.}(-5,-10) &
(5,10)};(2,4)*{h};
(-10,0)*{};(-10,-20)*{} **\dir{-};(-20,-10)*{f(y)};
(-16,-10)*{};(-10,-8)*{} **\dir{-};
(-5,-3)*{};(-10,-20)*{} **\crv{(-6,-9) & (-10,0)};
(0,0)*{};(-10,-20)*{} **\crv{(2,-12) & (-10,0)};
(5,3)*{};(-10,-20)*{} **\crv{(7,-14) & (-10,0)};
(10,0)*{};(-10,-20)*{} **\crv{(12,-14) & (-10,0)};
(12,-14)*{\psi_{y,t}(f(y))};
(5,-12)*{};(0,-3)*{} **\dir{-};
(20,0)*{\bullet};(23,0)*{\infty};
\endxy
\]
Notice that $\Psi(\psi_{y,t}(f(y)))=\psi_{y,t}(\Psi(f(y)))$, so it suffices
that $\psi_{y,t}$ restricted to $C_{n+2}$ satisfies $\psi_{y,t}(\Psi(f(y)))=h(y,t)$.
In fact, we start by constructing the restriction $\bar{\psi}_{y,t}$ of
$\psi_{y,t}$ to $C_{n+2}$.

First, pick a continuous family of parametrised Darboux discs
$\{B_p\stackrel{\iota_p}{\rightarrow} C_{n+2}\setminus\{\infty\}\}$, one centred at each point $p\in C_{n+2}\setminus\{\infty\}$. For each $(y,t)\in Y\times I$ let $X(y,t)=\frac{\partial h(y,t)}{\partial t}$ and let $v(y,t)$ be the pullback of this vector to $B_{h(y,t)}$ along $\iota_{h(y,t)}$. Extend $v$ to a constant vector field on $B_{h(y,t)}$. This is generated by a linear Hamiltonian function which we can multiply by a cut-off function (equal to $1$ on a neighbourhood of $0\in B_{h(y,t)}$) to obtain a Hamiltonian which is compactly supported in the ball. Pulling back this Hamiltonian along $\iota_{h(y,t)}^{-1}$ gives a Hamiltonian $H_{y,t}$ on $C_{n+2}$, supported in a neighbourhood of $h(y,t)$. We call the time $t$ flow of this Hamiltonian $\bar{\psi}_{y,t}$. Notice that $\bar{\psi}_{y,t}(h(y,0))=h(y,t)$.

Let $N$ be a small tubular neighbourhood of $C_{n+2}$, $p:N\rightarrow C_{n+2}$ be the projection along the fibres of the pencil $P$ and let $\eta$ be a radial cut-off function on $N$ which equals 1 on a neighbourhood of $C_{n+2}$. The Hamiltonian function $\eta p^*H_{y,t}$ generates a time $t$ flow $\psi_{y,t}$ which extends $\bar{\psi}_{y,t}$ in such a way that it fixes $C_{n+1}$ and sends standard configurations to standard configurations.
To prove the result when $n>1$ the same construction is used in a Darboux chart near each point of $C_{n+2}\cap S$. This means that now we fix a (continuously varying) choice of $n$ disjoint Darboux discs over each configuration of $n$ distinct points in $C_{n+2}\setminus\{\infty\}$, modifying the proof accordingly.
\end{proof}

The space $\Conf(n)$ of configurations of $n$ points in the disc has
fundamental group $\Br_n$ and since $\mC_0$ is weakly contractible this group acts freely and transitively on the set of components of the fibre $\mF$ of $\Psi$. Moreover, $\Conf(n)$ is a $K(\Br_n,1)$-space, so the long exact sequence of homotopy groups associated to $\Psi$ implies:

\begin{lma}
$\mF\simeq\pi_0(\mF)$.
\end{lma}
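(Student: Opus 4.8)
The plan is to extract the vanishing of all the higher homotopy groups of the fibre $\mF$ directly from the long exact sequence in homotopy associated to the fibration $\Psi\colon\mC_0\rightarrow\Conf(n)$, whose existence was established in the preceding proposition. There are only two inputs. The first is Proposition \ref{milnorcontr}, which tells us that $\mC_0$ is weakly contractible, so that $\pi_k(\mC_0)=0$ for all $k\geq 0$. The second is the classical fact (Fadell--Neuwirth) that the configuration space of $n$ unordered points in a disc is aspherical, i.e. $\Conf(n)$ is a $K(\Br_n,1)$; in particular $\pi_1(\Conf(n))=\Br_n$ and $\pi_k(\Conf(n))=0$ for all $k\geq 2$.

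With these in hand the long exact sequence
\[\cdots\rightarrow\pi_{k+1}(\Conf(n))\rightarrow\pi_k(\mF)\rightarrow\pi_k(\mC_0)\rightarrow\pi_k(\Conf(n))\rightarrow\cdots\]
does all the work. For each $k\geq 2$ both of the flanking groups $\pi_{k+1}(\Conf(n))$ and $\pi_k(\mC_0)$ vanish (the former by asphericity, the latter by weak contractibility of $\mC_0$), forcing $\pi_k(\mF)=0$. The only slightly delicate index is $k=1$: here the relevant segment reads $\pi_2(\Conf(n))\rightarrow\pi_1(\mF)\rightarrow\pi_1(\mC_0)$, and again both ends vanish, since $\pi_2(\Conf(n))=0$ by asphericity and $\pi_1(\mC_0)=0$ by weak contractibility, so $\pi_1(\mF)=0$ as well.

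Thus $\pi_k(\mF)=0$ for every $k\geq 1$, at every basepoint, which is to say that each path-component of $\mF$ is weakly contractible. This is exactly the assertion that the inclusion of the discrete set $\pi_0(\mF)$ into $\mF$ (selecting one point per component) is a weak homotopy equivalence, i.e. $\mF\simeq\pi_0(\mF)$. There is no genuine obstacle in this final step; the two facts that carry the argument, namely weak contractibility of $\mC_0$ and asphericity of $\Conf(n)$, are respectively the content of Proposition \ref{milnorcontr} and a standard property of configuration spaces, and the real work was already done in verifying that $\Psi$ is a fibration.
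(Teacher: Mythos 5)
Your argument is correct and is exactly the one the paper intends: the lemma is deduced from the long exact sequence of the fibration $\Psi$, using weak contractibility of $\mC_0$ (proposition \ref{milnorcontr}) and the fact that $\Conf(n)$ is a $K(\Br_n,1)$. Your write-up is if anything more careful about the index bookkeeping and the basepoint issue than the paper's one-line justification.
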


We have an action of $\Symp_c(U')$ on the fibre
$\mF=\Psi^{-1}(\{\xi_1,\ldots,\xi_n\})$. Let us restrict attention to the orbit $\Orb(E)$ of the standard configuration of exceptional curves $C_1,\ldots, C_n$.

\begin{lma}
This consists of a union of connected components of $\mF$.
\end{lma}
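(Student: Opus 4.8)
The plan is to show that $\Orb(E)$ is saturated with respect to the partition of $\mF$ into connected components: if $S\in\Orb(E)$ and $S'$ lies in the same component of $\mF$ as $S$, then $S'\in\Orb(E)$ as well. Since $\mF\simeq\pi_0(\mF)$, its connected components coincide with its path components, so it suffices to prove the following transitivity-up-to-path statement: whenever $S_0,S_1\in\mF$ are joined by a path in $\mF$, there is a $\Phi\in\Symp_c(U')$ with $\Phi(S_0)=S_1$. Granting this, if $S=\psi(E)$ with $\psi\in\Symp_c(U')$ and $S'$ is path-connected to $S$, we obtain $\Phi$ with $\Phi(S)=S'$ and hence $S'=(\Phi\circ\psi)(E)\in\Orb(E)$.

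To prove the transitivity statement I would run the isotopy-extension argument already used to establish transitivity in the proof of theorem \ref{cstarcthm}, taking care that the resulting ambient isotopy is compactly supported in $U'$. Let $S_t$ be a path in $\mF$ from $S_0$ to $S_1$; after a $\mC^{\infty}$-small perturbation that is smooth in $t$ (theorem 2.6 of \cite{Hir76}) we may assume $S_t$ is a genuine smooth symplectic isotopy of configurations. The crucial local observation is that, because every $S_t$ lies in the fibre $\Psi^{-1}(\{\xi_1,\ldots,\xi_n\})$, the intersection points $S_t\cap C_{n+2}$ are the fixed distinct points $\xi_1,\ldots,\xi_n$ for all $t$ (distinct points cannot move continuously within a fixed finite set). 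By the last clause in the definition of a standard configuration, each component of $S_t$ therefore agrees with the fixed pencil member $P_{\xi_i}$ on a neighbourhood of $C_{n+2}$; by compactness of $[0,1]$ we may take this neighbourhood uniform in $t$. Likewise every $S_t$ is disjoint from $C_{n+1}$. Hence near $C'=C_{n+1}\cup C_{n+2}$ the isotopy $S_t$ is constant.

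Using lemma \ref{symporth} (so that the components meet $\omega$-orthogonally and the symplectic neighbourhood theorem applies cleanly), I would extend $S_t$ to a symplectic isotopy of a neighbourhood of the configuration which is the identity on a fixed neighbourhood of $C'$. To globalise this to an ambient isotopy I appeal to Banyaga's symplectic isotopy extension theorem (\cite{MS05}, theorem 3.19); its hypothesis $H^2(X,S_t\cup C';\RR)=0$ holds because the configuration $S_t\cup C'$ is a tree of spheres whose components carry homology classes spanning $H_2(X;\RR)$ (we have $[C_{n+1}]=H$, $[C_{n+2}]=H-\sum_j E_j$, and the $[S_i]$ exhaust the classes $E_1,\ldots,E_n$). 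The resulting ambient symplectic isotopy can be taken to fix a neighbourhood of $C'$ pointwise, so it lies in $\Symp_c(U')$, and its time-one map $\Phi$ satisfies $\Phi(S_0)=S_1$, as required.

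The main obstacle is precisely the compact-support bookkeeping near $C_{n+2}$: the configurations in $\mF$ genuinely touch the divisor $C_{n+2}$, so one must ensure the extending isotopy does nothing there. This is exactly what the pencil clause in the definition of a standard configuration is for — it pins each sphere to a fixed member of the pencil $P_t$ near $C_{n+2}$ — combined with the constancy of the intersection points along a path in the single fibre $\mF$. Once this is arranged, verifying the cohomological hypothesis of Banyaga's theorem is the only remaining check, and it is immediate from the homology computation above.
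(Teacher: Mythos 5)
Your argument is correct and follows essentially the same route as the paper's: an isotopy through standard configurations within the fibre $\mF$, extended first to a neighbourhood of the configuration (fixing a neighbourhood of $C'$, which is possible because the intersection points with $C_{n+2}$ are pinned and the spheres agree with fixed pencil members there) and then globally by Banyaga's theorem using $H^2(X,T_t;\RR)=0$, yielding an element of $\Symp_c(U')$ carrying $S_0$ to $S_1$. The extra details you supply --- the constancy of the intersection points along a path in a single fibre and the homological verification of Banyaga's hypothesis --- are correct and are simply left implicit in the paper.
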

\begin{proof}
If $S_0\in\Orb(E)\subset\mF$ and $S_1\in\mF$ lie in the same connected
component of $\mF$ then they are isotopic through standard configurations; let $S_t$ denote such an isotopy. Consider the isotopy $T_t=S_t\cup C'$ of configurations of symplectic spheres. By the assumption that $S_t$ is a standard configuration for each $t\in[0,1]$, this isotopy extends to an isotopy of neighbourhoods $\nu_t$ of $T_t$. We may also assume that this isotopy fixes a neighbourhood of $D=C_{n+2}\cup P_{\infty}$. The fact that $H^2(X,T_t;\RR)=0$ implies that this isotopy extends to a global symplectic isotopy of $X$ (by Banyaga's symplectic isotopy extension theorem). Hence there is a symplectomorphism of $X$ compactly supported on the complement of $C'$ (i.e. an element of $\Symp_c(U')$) which sends $S_0$ to $S_1$, so $\Orb(E)$ consists of a union of connected components of $\mF$.
\end{proof}

\begin{lma}
The stabiliser $\Stab(E)$ is weakly contractible.
\end{lma}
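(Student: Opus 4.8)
The plan is to mirror, one level up, the chain of fibrations used in the Del Pezzo cases, adapting it to the present setting where we have quotiented by the symmetric group to obtain unordered configurations. I want to show that $\Stab(E)$, the stabiliser in $\Symp_c(U')$ of the standard configuration $E=C_1\cup\cdots\cup C_n$ of exceptional spheres, is weakly contractible. First I would observe that an element $\phi\in\Stab(E)$ permutes the components $C_i$; but since $\phi$ lies in the identity component of the fibre (or at least fixes the fibre point $\{\xi_1,\ldots,\xi_n\}\in\Conf(n)$ and the components $C_i$ are pinned to distinct points $t_i=\xi_i$ on $C_{n+2}$ by the standard-configuration condition), each $\phi$ fixes every $C_i$ \emph{setwise} and fixes each intersection point $C_i\cap C_{n+2}$. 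So I first pass to the restriction map
\[
\Stab(E)\longrightarrow\prod_{i=1}^n\Symp(C_i,1)\times\Symp(C_{n+2},n+1),
\]
sending $\phi$ to its restrictions to the components, with the kernel being the subgroup $\Stab^0(E)$ fixing $E\cup C'$ pointwise. By Moser's theorem and section \ref{sympC}, $\Symp(C_i,1)\simeq\mD_1\simeq S^1$ and $\Symp(C_{n+2},n+1)$ is governed by $\mD_{n+1}$; but crucially each $\phi\in\Stab(E)$ is compactly supported away from $C_{n+1}\cup C_{n+2}$, so its restriction to $C_i$ is compactly supported in $C_i\setminus(C_i\cap C_{n+2})\cong\CC$, and its restriction to $C_{n+2}$ is the identity near $\infty$ and near each $\xi_i$. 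The relevant base is therefore a product of \emph{compactly supported} surface symplectomorphism groups, which are contractible.

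Next I would analyse $\Stab^0(E)$ via the gauge fibration of section \ref{confC}:
\[
\Stab^1(E)\longrightarrow\Stab^0(E)\longrightarrow\mG,
\]
where $\mG=\prod_i\mG(C_i)\times\mG(C_{n+2})$ records the derivative on the normal bundles, with the identity required near the relevant marked/intersection points. By the computation in section \ref{gau}, each $\mG(C_i)\cong\mG_1$ is weakly contractible (one marked point) and $\mG(C_{n+2})$, constrained to be the identity near $\infty$ and near each $\xi_i$, is likewise weakly contractible. Hence $\mG\simeq\star$ and $\Stab^1(E)\simeq\Stab^0(E)$. By the symplectic neighbourhood theorem, $\Stab^1(E)$ — the subgroup fixing $E\cup C'$ pointwise and acting trivially on normal bundles — is weakly homotopy equivalent to $\Symp_c(U)$, where $U=X\setminus(C'\cup\bigcup C_i)$. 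But by lemma \ref{U} and proposition \ref{twostein}, $\Symp_c(U)\simeq\Symp_c(\CC^*\times\CC)$, which is weakly contractible by theorem \ref{cstarcthm}. Feeding this back through the two homotopy long exact sequences — first $\Stab^1(E)\simeq\star$ and $\mG\simeq\star$ forcing $\Stab^0(E)\simeq\star$, then $\Stab^0(E)\simeq\star$ together with the contractibility of the surface factors forcing $\Stab(E)\simeq\star$ — yields the claim.

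The main obstacle, and the step deserving the most care, is the very first reduction: justifying that an element of $\Stab(E)$ genuinely fixes each component and each marked intersection point so that the fibration picture of section \ref{confC} applies verbatim, and simultaneously verifying that the induced restrictions are \emph{compactly supported} on the punctured spheres rather than merely fixing finitely many points. The former relies on the fact that we are stabilising a single point of the fibre $\mF$ (so the points $t_i=C_i\cap C_{n+2}$ are fixed, not merely permuted), which pins down the labelling; the latter relies on every $\phi\in\Symp_c(U')$ being the identity on a neighbourhood of $C'=C_{n+1}\cup C_{n+2}$, so its restriction to $C_i$ is the identity near $C_i\cap C_{n+2}$ and its restriction to $C_{n+2}$ is the identity near $\infty$ and near the puncture points. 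This is what upgrades the surface factors from circles $\mD_1$ to contractible compactly-supported groups and makes the whole tower collapse to a point. I would state these compact-support observations explicitly, as they are precisely the features distinguishing this lemma from the non-contractible Del Pezzo stabilisers.
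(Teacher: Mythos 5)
Your argument is correct and is essentially the paper's: the paper simply says ``arguing as in lemma \ref{stabwkcontr}'' and then invokes $\Symp_c(U)\simeq\Symp_c(\CC^*\times\CC)\simeq\star$, which is exactly the tower of fibrations (restriction to components, gauge group, symplectic neighbourhood theorem) that you spell out, together with the compact-support observations that make each stage contractible. Your extra care in checking that elements of $\Stab(E)$ fix each $C_i$ setwise (because they are the identity near $C_{n+2}$ and the $C_i$ pass through distinct fixed points $\xi_i$) is a detail the paper leaves implicit but is entirely consistent with its proof.
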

\begin{proof}
Arguing as in Lemma \ref{stabwkcontr}, we see that this is equivalent to
showing $\Symp_c(U)$ is weakly contractible. But we have already observed that
$\Symp_c(U)\simeq\Symp_c(\CC^*\times\CC)\simeq\star$.
\end{proof}

Banyaga's theorem gives a fibration:
\[
\begin{CD}
\Stab(E) @>>> \Symp_c(U') \\
@. @VVV \\
@. \Orb(E)
\end{CD}
\]
Weak contractibility of $\Stab(E)$ and of the components of $\Orb(E)$ implies
that $\pi_i(\Symp_c(U'))=0$ for $i>0$.

We finally prove that:
\begin{prp}
$\pi_0(\Symp_c(U'))$ injects homomorphically into the braid group $\Br_n$.
\end{prp}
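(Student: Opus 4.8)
The plan is to extract the injection from the monodromy of the fibration $\Psi:\mC_0\rightarrow\Conf(n)$ established above. Since every $\phi\in\Symp_c(U')$ is compactly supported away from $C_{n+2}$, it fixes a neighbourhood of $C_{n+2}$ pointwise and therefore satisfies $\Psi\circ\phi=\Psi$; in particular $\phi$ preserves the fibre $\mF=\Psi^{-1}(\{\xi_1,\ldots,\xi_n\})$ and induces a self-map $\phi_{\star}$ of the discrete set $\pi_0(\mF)$. Isotopic symplectomorphisms induce the same map, so $\phi\mapsto\phi_{\star}$ descends to an action of $\pi_0(\Symp_c(U'))$ on $\pi_0(\mF)$; since $(\phi\circ\psi)_{\star}=\phi_{\star}\circ\psi_{\star}$ this is a genuine homomorphism into the permutation group of $\pi_0(\mF)$.

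The key structural observation is that this action commutes with the monodromy action of $\Br_n=\pi_1(\Conf(n))$ on $\pi_0(\mF)$. Indeed, the monodromy of a loop $\gamma$ is computed by lifting $\gamma$ through $\Psi$; because $\phi$ covers the identity on $\Conf(n)$ it carries $\Psi$-lifts to $\Psi$-lifts, so $\phi_{\star}$ intertwines parallel transport and hence commutes with monodromy on $\pi_0(\mF)$. Recall that $\pi_0(\mF)$ is a $\Br_n$-torsor, the monodromy action being free and transitive as noted above. The group of $\Br_n$-equivariant bijections of a $\Br_n$-torsor is, after choosing the basepoint $[E]\in\pi_0(\mF)$ given by the exceptional configuration, isomorphic to $\Br_n$ (an equivariant automorphism is determined by the image of $[E]$, and composition reverses order, so the order-reversal is absorbed into the identification $\Br_n^{\mathrm{op}}\cong\Br_n$). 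Composing gives the desired homomorphism $\pi_0(\Symp_c(U'))\rightarrow\Br_n$, sending $[\phi]$ to the element $b$ with $\phi_{\star}([E])=b\cdot[E]$.

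Finally I would check injectivity using the orbit--stabiliser fibration $\Stab(E)\rightarrow\Symp_c(U')\rightarrow\Orb(E)$. Because $\Stab(E)$ is weakly contractible and the components of $\Orb(E)$ are weakly contractible, its long exact sequence shows that $[\phi]\mapsto[\phi(E)]$ is a bijection $\pi_0(\Symp_c(U'))\rightarrow\pi_0(\Orb(E))$. If the image element $b$ is trivial then $\phi_{\star}([E])=[E]$, i.e.\ $[\phi(E)]=[E]$ in $\pi_0(\mF)$; since $\Orb(E)$ is a union of connected components of $\mF$ this equality already holds in $\pi_0(\Orb(E))$, whence $[\phi]$ is trivial by the bijection. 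I expect the main obstacle here to be bookkeeping rather than geometric: one must verify that the induced action on $\pi_0(\mF)$ really commutes with monodromy, and keep careful track of the variance so that the resulting torsor map is a homomorphism rather than merely an anti-homomorphism into $\Br_n$. All of the hard analytic input has already been spent in proving that $\mC_0$ and $\Stab(E)$ are weakly contractible.
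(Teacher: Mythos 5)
Your proposal is correct and follows essentially the same route as the paper: the $\pi_0(\Symp_c(U'))$-action on $\pi_0(\mF)$ commutes with the free transitive $\Br_n$-monodromy action, and freeness on $\pi_0(\Orb(E))$ (from weak contractibility of $\Stab(E)$ and of the components of $\Orb(E)$) yields the injective homomorphism. The only cosmetic difference is that you justify the commutation formally (a map covering the identity carries $\Psi$-lifts to $\Psi$-lifts), whereas the paper notes that the lifting symplectomorphisms $\psi_{y,t}$ are supported near $C_{n+2}$ and hence literally commute with elements of $\Symp_c(U')$; both arguments are valid.
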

\begin{proof}
The symplectomorphisms $\psi_{y,t}$ used to construct lifts in the proof of the fibration property of $\Psi$ are supported in some arbitrarily small neighbourhood of $C_{n+2}$. The action of $\Symp_c(U')$ is by definition supported on the complement of a neighbourhood of $C_{n+2}$. This implies that the action of $\pi_0(\Symp_c(U'))$ on $\pi_0(\mF)$ commutes with the action of $\Br_n$ coming from the fibration $\Psi$. When restricted to the components of $\Orb(E)\subset\mF$ we have seen that the $\pi_0(\Symp_c(U'))$ action is free. The $\Br_n$ action is free on the whole of $\pi_0(\mF)$. The claim that $\pi_0(\Symp_c(U'))$ injects into the braid group now follows from the following elementary lemma:

\begin{lma}
Let $G$ and $H$ be groups acting on a set $A$. Suppose $H$ acts freely and
transitively on $A$ and that $G$ acts freely on an orbit $\Orb_G(a)$ for some
$a\in A$. In particular, for each $b\in A$ there exists a unique $h_b$ such
that $h_b(a)=b$.  Define a map
\begin{align*}
f:G&\rightarrow H \\
f(\sigma)&=h_{\sigma(a)}^{-1}
\end{align*}
This map is injective. If the actions commute then it is a homomorphism.
\end{lma}
\end{proof}

\appendix
\section{Structures making a configuration holomorphic}\label{spaceofstructures}

Let $S=\bigcup_{i=1}^nS_i$ be a union of embedded symplectic 2-spheres in a symplectic 4-manifold $X$. Suppose that the various components intersect transversely and that there are no triple intersections. Suppose further that there is an $\omega$-compatible $J$ for which all the components $S_i$ are $J$-holomorphic. Let $\mH(S)$ denote the space of $\omega$-compatible almost complex structures $J$ for which all components $S_i$ are simultaneously $J$-holomorphic. We now provide a proof of Lemma \ref{contrH} from section \ref{confhol}.
\setcounter{unlma}{27}
\begin{lmaun}
If all the intersection between components of $S$ are symplectically orthogonal then the space $\mH(S)$ is weakly contractible.
\end{lmaun}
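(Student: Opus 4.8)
The plan is to exhibit $\mH(S)$ as weakly equivalent to the space of sections of a genuine fibre bundle over $S$ whose fibre is contractible, by separating the (unobstructed, contractible) problem of extending an almost complex structure off $S$ from the constraint imposed along $S$ itself. Write $\mJ(\omega)$ for the space of all $\omega$-compatible almost complex structures on $X$; this is classically contractible, being the space of sections of the bundle over $X$ whose fibre over $p$ is the space $\mJ(T_pX,\omega_p)$ of $\omega_p$-compatible linear complex structures (a Siegel upper half-space, hence contractible). I would then consider the restriction map $\rho\colon\mJ(\omega)\to\Gamma_S$, where $\Gamma_S$ denotes the space of sections over $S$ of the restricted bundle $p\mapsto\mJ(T_pX,\omega_p)$. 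Because $(X,S)$ is a CW pair and the fibre $\mJ(T_pX,\omega_p)$ is contractible, $\rho$ is a Serre fibration whose fibre (compatible structures on $X$ with prescribed restriction to $S$) is weakly contractible, and the hypothesis that $\mH(S)$ is nonempty guarantees $\rho$ hits the relevant part of the base.

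Inside $\Gamma_S$ sits the subspace $\Gamma_S^{\mathrm{hol}}$ of those pointwise complex structures preserving the tangent line to each branch of $S$ through each point; by definition $\mH(S)=\rho^{-1}(\Gamma_S^{\mathrm{hol}})$. Restricting $\rho$ over this subspace gives a fibration $\mH(S)\to\Gamma_S^{\mathrm{hol}}$ with the same weakly contractible fibre, so the two spaces share a weak homotopy type and it suffices to show $\Gamma_S^{\mathrm{hol}}$ is weakly contractible. Now $\Gamma_S^{\mathrm{hol}}$ is the section space of a sub-bundle $\mathcal{Z}\to S$ with fibre $Z_p$ the set of $\omega_p$-compatible $J$ preserving the relevant tangent plane(s). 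At a smooth point $p\in S_i$, a compatible $J$ preserving the symplectic $2$-plane $V=T_pS_i$ automatically preserves its symplectic orthogonal complement $V^{\omega}$, so $Z_p\cong\mJ(V,\omega|_V)\times\mJ(V^{\omega},\omega|_{V^{\omega}})$, a product of two hyperbolic planes, hence contractible.

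The main point, and the only place the orthogonality hypothesis enters, is the behaviour at a nodal point $p\in S_i\cap S_j$. There the constraint is that $J$ preserve both $T_pS_i$ and $T_pS_j$; since these planes are symplectically orthogonal we have $T_pS_j=(T_pS_i)^{\omega}$, so preserving $T_pS_i$ already forces preservation of $T_pS_j$, and the two branch-constraints cut out literally the same subset $Z_p$ of $\mJ(T_pX,\omega_p)$. This equality is exactly what guarantees that $\mathcal{Z}\to S$ is a well-defined fibre bundle across the node, with the single contractible fibre $Z_p\cong\mJ(T_pS_i,\omega)\times\mJ(T_pS_j,\omega)$; without orthogonality the simultaneous constraint would not match the single-branch fibres (and could even be empty), destroying both the bundle structure and the contractibility. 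Granting this identification, $\mathcal{Z}\to S$ is a fibre bundle over the nodal curve $S$ (a finite CW complex) with weakly contractible fibre, so its section space $\Gamma_S^{\mathrm{hol}}$ is weakly contractible by obstruction theory, whence $\mH(S)$ is too. I expect the genuinely routine technical points to be the verification that the relevant restriction maps are Serre fibrations for the CW pair $(X,S)$ and that passing between the $C^{\infty}$ and continuous section spaces preserves weak homotopy type; the conceptual heart is the fibre identification at the nodes described above.
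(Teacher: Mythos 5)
Your proposal is correct and reaches the paper's conclusion by a partly different route, so a comparison is worthwhile. The first stage is common to both arguments: you and the paper both present $\mH(S)$ as the total space of a fibration, with weakly contractible fibre given by the extension-off-$S$ problem, over the space of compatible structures along $S$ that preserve the tangent branches (your $\Gamma_S^{\mathrm{hol}}$ is the paper's $\mJ|_S$). The paper proves this fibration property by hand --- building lifts from symplectic neighbourhoods, $\omega$-orthogonal connections, and interpolation of associated metrics in Darboux charts at the nodes --- which is the honest smooth-category version of your appeal to ``restriction of sections over a cofibration is a fibration''; the $C^{\infty}$-versus-continuous issue you flag at the end is precisely where that work lives, so do not underestimate it. The second stage genuinely differs. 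The paper fibres $\mJ|_S$ further over $\prod_i\mJ(S_i)$, the spaces of complex structures on the abstract spheres, and identifies the fibre as a homogeneous space $\mG(S)/\mG_u(S)$ for the symplectic gauge groups of the normal bundles, contractible because the unitary gauge group includes as a deformation retract. You instead observe that the pointwise constraint space $Z_p$ is contractible --- a product of two hyperbolic planes at a smooth point, and, at a node, the \emph{same} contractible space whether computed from the branch $S_i$ or from $S_j$, which is exactly where orthogonality enters in both proofs --- and then quote obstruction theory for section spaces. Your version is softer and shorter; the paper's gauge-theoretic decomposition has the side benefit of reusing the groups $\mG_k$ of section \ref{gau} and of isolating where local triviality across the nodes must be verified, a point your write-up compresses into the assertion that $\mathcal{Z}\to S$ is a fibre bundle over the nodal curve (near a node one should argue via the fibre product of the section spaces over the two branches over evaluation at the node, but each factor is contractible and maps by a fibration onto the contractible $Z_p$, so the conclusion stands).
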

\begin{proof}
Let $\mJ(S_i)$ denote the (contractible) space of $\omega|_{S_i}$-compatible almost complex structures on $S_i$ and $\mJ(S)$ denote the product $\prod_{i=1}^n\mJ(S_i)$. The space $\mH$ maps to $\mJ(S)$ by restriction. We want to show this is a homotopy equivalence.

We proceed in stages. First define $\mJ|_{S_i}$ to be the space of $\omega$-compatible almost complex structures on $TX|_{S_i}$ and $\mJ|_S$ to be the product $\prod_{i=1}^n\mJ|_{S_i}$. The restriction map $\mH(S)\rightarrow\mJ(S)$ factors through restriction maps:
\[\mH(S)\rightarrow\mJ|_S\rightarrow\mJ(S)\]
We first analyse the map $\mJ|_S\rightarrow\mJ(S)$.
\begin{lma}\label{firstfibr}
$\mJ|_S\rightarrow\mJ(S)$ is a fibration.
\end{lma}
\begin{proof}
We illustrate the proof by proving path-lifting. Let $\nu_i$ denote the normal bundle to $S_i$, identified canonically as a subbundle of $TX|_{S_i}$ (the $\omega$-orthogonal complement to $TS_i$). Fix an $\omega|_{\nu_i}$-compatible almost complex structure $k_i$ on $\nu_i$. If
\[\gamma=(j_1(\cdot),\ldots,j_n(\cdot)):I\rightarrow\mJ(S)=\prod_{i=1}^n\mJ(S_i)\]
is a path of complex structures on $S=S_1\cup\cdots\cup S_n$ then $j_i\oplus k_i$ is an $\omega$-compatible almost complex structure on $TX|_{S_i}$. Unfortunately, there is no guarantee that $j_a(t)\oplus k_a=k_b\oplus j_b(t)$ at the intersection point $x\in S_a\cap S_b$. To rectify this, fix a Darboux ball $\iota:(B,\omega_0)\rightarrow (X,\omega)$ centred at $x$ for which $(S_a)$ is identified under $\iota$ with $\Pi_a=B\cap\RR^2\times\{0\}$ and $S_b$ with $\Pi_b=B\cap\{0\}\times\RR^2$. Let $\eta$ be a radial cut-off function on $B$ which is equal to $1$ on a neighbourhood of $0$ and equal to $0$ outside a small ball.

We now modify $k_a$ and $k_b$ so that they agree with $j_b(t)$ and $j_a(t)$ at $x$. Let $c$ stand for either $a$ or $b$. Let $g_c$ be the metric on the normal bundle $\nu\Pi_c$ to $\Pi_c$ corresponding to the pullback $\iota^*k_c$. Let $j(t)$ be the constant complex structure on $B$ such that $\iota_*j(t)=j_a(t)\oplus j_b(t)$ at $x$. This $j(t)$ defines a metric $\tilde{g}_c(t)$ on the normal bundle to $\Pi_c$. Since the space of metrics is convex, define a metric
\[g'_c(t)=(1-\eta)g_c+\eta \tilde{g}_c(t)\]
This in turn defines an endomorphism $A_c(t)$ of $\nu\Pi_c$ via $g'_c(t)=\omega_0(A_c(t)\cdot,\cdot)$. By the usual trick, $(A_c(t)A_c^{\dagger}(t))^{-1/2}A_c(t)$ defines an $\omega_0$-compatible almost complex structure on $\nu\Pi_c$. Push this forward along $\iota$. The new almost complex structures $k'_a(t)$ and $k'_b(t)$ agree with $k_a$ and $k_b$ outside the Darboux ball but now $k'_a(t)=j_b(t)$ and $k'_b(t)=j_a(t)$ at $x$.

Do this at every intersection point to get a 1-parameter family $k_i(t)$ of almost complex structures on $\nu_i$. These give almost complex structures $j_i(t)\oplus k_i(t)$ on $TX|_{S_i}$ such that $j_a(t)\oplus k_a(t)=k_b(t)\oplus j_b(t)$ at any intersection point $x\in S_a\cap S_b$ and hence we get a well-defined lift of $\gamma$ to a path in $\mJ|_S$.
\end{proof}

\begin{lma}
The fibre of $\mJ|_S\rightarrow\mJ(S)$ is contractible.
\end{lma}
\begin{proof}
The fibre is the space of almost complex structures on $TX|_S$ which are fixed along the tangent directions to each $S_i$. For each $S_a$, let $\mG(S_a)$ denote the group of symplectic gauge transformations of $\nu_a$ (the normal bundle to $S_a$) which equal the identity at intersection points $x\in S_a\cap S_b$. This acts transitively on the space of $\omega|_{\nu_a}$-compatible almost complex structures which agree with $j_a\oplus j_b$ at $x\in S_a\cap S_b$. Hence $\mG(S)=\prod_{i=1}^n\mG(S_i)$ acts transitively on the fibre of $\mJ|_S\rightarrow\mJ(S)$, making it into a homogeneous space $\mG(S)/\Stab(J)$. Since the inclusion $U(1)\rightarrow \mbox{Sp}(2)$ is a homotopy equivalence, $\mG(S)$ is homotopy equivalent to the group of unitary gauge transformations, $\mG_u(S)$, which is also the stabiliser of any given almost complex structure on the normal bundle. The homogeneous space $\mG(S)/\mG_u(S)$ is contractible.
\end{proof}

This shows that the space $\mJ|_S$ is weakly contractible, as it is the total space of a fibration over a contractible space with contractible fibres. We now analyse the restriction map $\mH(S)\rightarrow\mJ|_S$.

\begin{lma}
$\mH(S)\rightarrow\mJ|_S$ is a fibration.
\end{lma}
\begin{proof}
By the symplectic neighbourhood theorem, each component of $S_i$ has a neighbourhood $M_i$ isomorphic to a neighbourhood $\nu_i$ of the zero section in its normal bundle via an isomorphism $\phi_i:\nu_i\rightarrow M_i$. Since the various components intersect symplectically orthogonally, these identifications can be chosen compatibly in the sense that $S_j\cap M_i$ is identified with a normal fibre in $\nu_i$. Let $\gamma(t)=(J_1(t),\ldots,J_n(t))$ be a path in $\mJ|_S$. Each $\phi_i^*J_i(t)$ can be canonically extended to an $\omega$-compatible almost complex structure on $\nu_i$: $J_i(t)$ automatically defines an almost complex structure on the normal bundle to $S_i$, and the horizontal spaces $\omega$-orthogonal to the normal fibres give a connection which allows one to lift $J_i(t)|_{TS_i}$ to an almost complex structure on the total space of $\nu_i$.

This does not quite work at the intersection points where one must fix a Darboux chart and implant a local model. Suppose the two intersecting components are sent to the $\RR^2\times\{0\}$ and $\{0\}\times\RR^2$ planes in the Darboux chart and the almost complex structure $J(p,q)$ is specified along these planes (i.e. for points $(p,0)$ and $(0,q)$). Then, working with associated metrics, one can interpolate linearly as follows. Let $S^1(r)\times\{0\}$ and $\{0\}\times S^1(r)$ denote the unit circles (for the standard Euclidean metric) of the coordinate planes. For any point $x$ of $\RR^4$ there exists an $r$ and a unique line segment connecting $S^1(r)\times\{0\}$ to $\{0\}\times S^1(r)$ containing $x$. The analogous picture in $\RR^2=\RR\times\RR$ is familiar, where the dots lie on the unit circles:
\[
\xy
(-10,0)*{};(10,0)*{} **\dir{-};(12,0)*{\RR};
(0,-10)*{};(0,10)*{} **\dir{-};(0,12)*{\RR};
(0,4)*{};(4,0)*{} **\dir{-};
(0,-4)*{};(4,0)*{} **\dir{-};
(0,-4)*{};(-4,0)*{} **\dir{-};
(0,4)*{};(-4,0)*{} **\dir{-};
(0,8)*{\bullet};(8,0)*{\bullet} **\dir{-};
(0,-8)*{\bullet};(8,0)*{} **\dir{-};
(0,-8)*{};(-8,0)*{\bullet} **\dir{-};
(0,8)*{};(-8,0)*{} **\dir{-};
\endxy
\]
Use the linear coordinate of this line to interpolate the associated metrics. Modifying this near the boundary of the Darboux ball to agree with the $J$ constructed from connections, we obtain an $\omega$-compatible almost complex structure $\tilde{\gamma}(t)$ on a neighbourhood of $S$.

Fix an arbitrary $\omega$-compatible almost complex structure $K$ on $X$. Interpolating associated metrics using a cut-off function in a neighbourhood of $S$, one can extend $\tilde{\gamma}(t)$ over $X$ so that it agrees with $K$ outside a neighbourhood of $S$. This gives a lift of the path $\gamma$. The proof of the covering homotopy property is similar but cumbersome.
\end{proof}

\begin{lma}
The fibre of $\mH(S)\rightarrow\mJ|_S$ is contractible.
\end{lma}
\begin{proof}
This is standard. The fibre $\mF$ consists of $\omega$-compatible almost complex structures making $S$ holomorphic which agree with a fixed almost complex structure $J$ along $S$. We define a deformation retraction of $\mF$ to $\{J\}$: for $J'\in\mF$, define the metric $g_{J'}=\omega(\cdot,J'\cdot)$. The path $g_{J'}(t)=tg_J+(1-t)g_{J'}$ of metrics defines a path of almost complex structures connecting $J'$ and $J$ in the usual way.
\end{proof}

This shows that $\mH(S)$ is weakly contractible, since it is the total space of a fibration over a weakly contractible space $\mJ|_S$ with contractible fibres.
\end{proof}

\end{document}